\documentclass[12pt,oneside]{amsart}
\usepackage[pdftex]{epsfig}

\usepackage{amsmath}
\usepackage{amssymb,amsbsy,amsmath,amsfonts,amssymb,amscd}
\usepackage{latexsym}
\usepackage{geometry}
\usepackage{showkeys}
\usepackage{color, colortbl}

\usepackage{latexsym} 
\usepackage{comment}

\usepackage{color}
\usepackage{colonequals}

\definecolor{darkgreen}{rgb}{0,0.5,0}

\usepackage[
        colorlinks, citecolor=darkgreen,
        pdfauthor={Ingrid Bauer, Fabrizio Catanese, Antonio Di Scala},
        pdftitle={Higher dimensional lemniscates}
]{hyperref}

\usepackage[all]{xy} 

\usepackage[alphabetic,backrefs,lite]{amsrefs}

\input xy
\xyoption{all}



\newcommand\sC{{\mathcal C}}

\newcommand\sG{{\mathcal G}}

\newcommand\sL{{\mathcal L}}

\newcommand\sR{{\mathcal R}}


\newcommand\Ga{\Gamma}
\newcommand\De{\Delta}

\newcommand\de{\delta}

\newcommand{\CC}{\ensuremath{\mathbb{C}}}
\newcommand{\RR}{\ensuremath{\mathbb{R}}}

\newcommand{\ra}{\ensuremath{\rightarrow}}

\def\eea{\end{eqnarray*}}
\def\bea{\begin{eqnarray*}}

\newcommand\dual{\mathrel{\raise3pt\hbox{$\underline{\mathrm{\thinspace d
\thinspace}}$}}}
\newcommand\qe{\ifhmode\unskip\nobreak\fi\quad $\Box$}       

\def\BOX{\hfill\lower.5\baselineskip\hbox{$\Box$}}

\newtheorem{theo}{Theorem}[section]
\newtheorem{remarkk}[theo]{Remark}
\newenvironment{rem}{\begin{remarkk}\rm}{\end{remarkk}}

\newtheorem{defin}[theo]{Definition}
\newenvironment{definition}{\begin{defin}\rm}{\end{defin}}

\newtheorem{prop}[theo] {Proposition}
\newtheorem{cor}[theo]{Corollary}
\newtheorem{lemma}[theo]{Lemma}
\newtheorem{example}[theo]{Example}

\newtheorem{question}[theo]{Question}

\DeclareMathOperator{\grad}{grad}
\DeclareMathOperator{\Mat}{Mat}
\DeclareMathOperator{\Conv}{Conv}
\begin{document}

\title[Space Lemniscates]{Higher dimensional Lemniscates: the geometry of $r$ particles in $n$-space with logarithmic potentials}
\author{I. Bauer, F. Catanese, A.J. Di Scala}

\thanks{The authors acknowledge support of the ERC 2013 Advanced Research Grant - 340258 - TADMICAMT}

\date{\today}
\maketitle

\begin{abstract}

The main purpose of this paper is to lay down the foundations of the theory of higher dimensional lemniscates
 and to propose the investigation of several interesting open problems in the field.

 The underlying geometrical problem is quite old (see \cite{walsh6},  \cite{walsh7}, \cite{nagy4}, \cite{motzkin-walsh}),
 but, whereas in the past the main question was the location of the critical points of a certain distance function,
 the  main thrust of our approach is to look at the logarithm of this function as the sum of  logarithmic potentials at $r$ points.
And the main goal is  the topological analysis of the configuration of the singular solutions (of the associated differential equation).
  We  look at the number and type of critical points, and
we make a  breakthrough in higher dimensions relating the question to modern methods of  complex analysis
 in several variables,  showing that any  critical point   has   Hessian with   positivity at least $(n-1)$;
 hence, for general choice of the $r$ points  we get  a local Morse function whose  only critical points are local minima and saddles of negativity 1.

  Moreover we  show that  the critical points are   isolated, so that  there are no    curves of local minima.
 The existence or not of these curves, and the upper bound for the number of critical points are related to  famous classical problems
 posed by Morse-Cairns (\cite{cairns-morse}  and by Maxwell \cite{maxwell}  for the case of electrostatic potentials.

 We provide examples where the number of extra local minima can be arbitrarily large.

\end{abstract}
\tableofcontents
\section*{Introduction}

\noindent
{\em What is a lemniscate?}

\noindent
 If the reader looks on the web, or in the classical literature, as an answer,  he will find the lemniscate of Bernoulli, which is the singular level set of the
 function
 $|z^2 - c^2|$. The  level sets of such a function are  called {\em Cassini's ovals}: they are just the sets of points $z$ in the complex plane $\CC$ such that the product
 $|z-c||z+c|$ of the respective distances of $z$  from the points $+c, -c$, where $c \in \RR$,
 is equal to a constant (whereas ellipses are defined by the condition that the sum of the distances from two given points is constant).

 Since any univariate polynomial $P(z) \in \CC [z]$ is a product of linear factors, $P(z) =  \prod_1^r (z-w_j)$, we see that if we take $r$ points $w_1, \dots, w_r$
 in the plane, the locus of points $z$ such that the product of the $r$ respective distances  $|z-w_j|$ is equal to a constant, is just a level set
 of the absolute value  $|P(z) |$ of the complex polynomial $ P(z)$.

 Using this analogy, the second author and Marco Paluszny in \cite{catanesepaluszny} defined a big lemniscate as a singular level
 set  of the absolute value  $|P(z) |$ of the complex polynomial $ P(z)$, and a small lemniscate as a singular connected component of a
 big lemniscate.  These definitions  were motivated by  the quest of  understanding the
singular solutions of certain differential equations (\cite{pal1}).

If we take the square $F(z) = |P(z) |^2 = P(z) \overline{P(z)}$, we obtain a real polynomial $F \in \RR[x,y]$ and if the points $w_1, \ldots , w_r$ are distinct,
they are absolute minima with nondegenerate Hessian, and the major results of \cite{catanesepaluszny} (relying also on the analysis in \cite{CatWaj})
consisted in describing the
topological configurations of the union of the big lemniscates (resp.: of the small lemniscates) in the special situation
where $f : = \log (F)$ is a global Morse function, i.e, a function whose critical points $y_i$ all have a non-degenerate Hessian, and such that all the critical values $v_i : = f(y_i)$ are different.

Here, the critical points of $F$ are just the absolute minima $w_1, \dots, w_r$, and the roots $y_1, \dots, y_{r-1}$ of the complex derivative $P'(z)$:
the points $y_i$ have (negativity) index 1, and are thus saddle points.

There is a beautiful order which governs the pictures of these lemniscates, and leads to nice and interesting generating functions
which seem to be ubiquitous (see \cite{Arnold91},  \cite{Arnold92} which took up the pictures of   \cite{catanesepaluszny}, and \cite{CatFr93}
which explained geometrically why the generating functions for big lemniscates and Morse functions on $\RR$ are the same)
The key idea is to enumerate the components where the topological configuration is fixed as the orbits of a subgroup of the braid group
acting on the set of edge labelled trees.

While the first two authors (\cite{bauercat}) generalized  these investigations to the case of an algebraic function on a Riemann surface (i.e.,
a connected complex manifold
of complex dimension equal to 1), Marco Paluszny (\cite{pmo} and \cite{3D}) defined and started to investigate the similar loci in $\RR^3$, producing nice pictures of the
corresponding configurations, thus bringing to attention a classical problem investigated by Walsh and others  \cite{walsh6},  \cite{walsh7}, \cite{nagy4}, \cite{motzkin-walsh}.

The first purpose of the present paper is to lay the foundations of the theory of lemniscates in $\RR^N$, proving some rather strong
basic results.

\begin{defin} \

\begin{enumerate}
\item
Let $w_1, \dots, w_r \in \RR^N$ be distinct points, and consider the functions
$$F :  \RR^N \ra \RR^+, F(x) : =  \prod_1^r |x-w_j|^2, \ \  f(x) : = \log F(x) = \sum_1^r  \log (|x-w_j|^2).$$
\item
A {\em big lemniscate} (for  $w_1, \dots, w_r \in \RR^N$) is defined to be a singular level set $\Ga_c$ of $f(x)$.
The big  lemniscate configuration of $f$ is the union $\Ga(f) $ of the singular level sets  $\Ga_c$.
\item
A {\em small lemniscate}  (for  $w_1, \dots, w_r \in \RR^N$) is defined to be a connected component $\Lambda_c$
of a level set  $\Ga_c = \{ x | f(x) = c\}$,
which is singular. The small  lemniscate configuration of $f$ is the union $\Lambda(f) $ of the small lemniscates.
\item
The configuration  $\Lambda(f) $  of small lemniscates is said to be {\em weakly generic} if the function $f(x)$  is a (local) Morse function, i.e.,
its critical points $y_i$ all have a non-degenerate Hessian.
\item
The configuration of big lemniscates $\Ga(f) $ is said to be {\em generic} if the function  $f(x)$  is a global Morse function, i.e., it is a Morse function
and the critical values $f(y_i)$ are all different (notice that the absolute minima for $F(x)$ are just the zeros of $F(x)$,
i.e.,  the points $w_j$, which are all automatically non degenerate).
\end{enumerate}
\end{defin}

\begin{figure}[h!]
\centering
\includegraphics[trim=1cm 8cm 6.5cm 10cm, clip,draft=false,width=8cm,height=7cm]{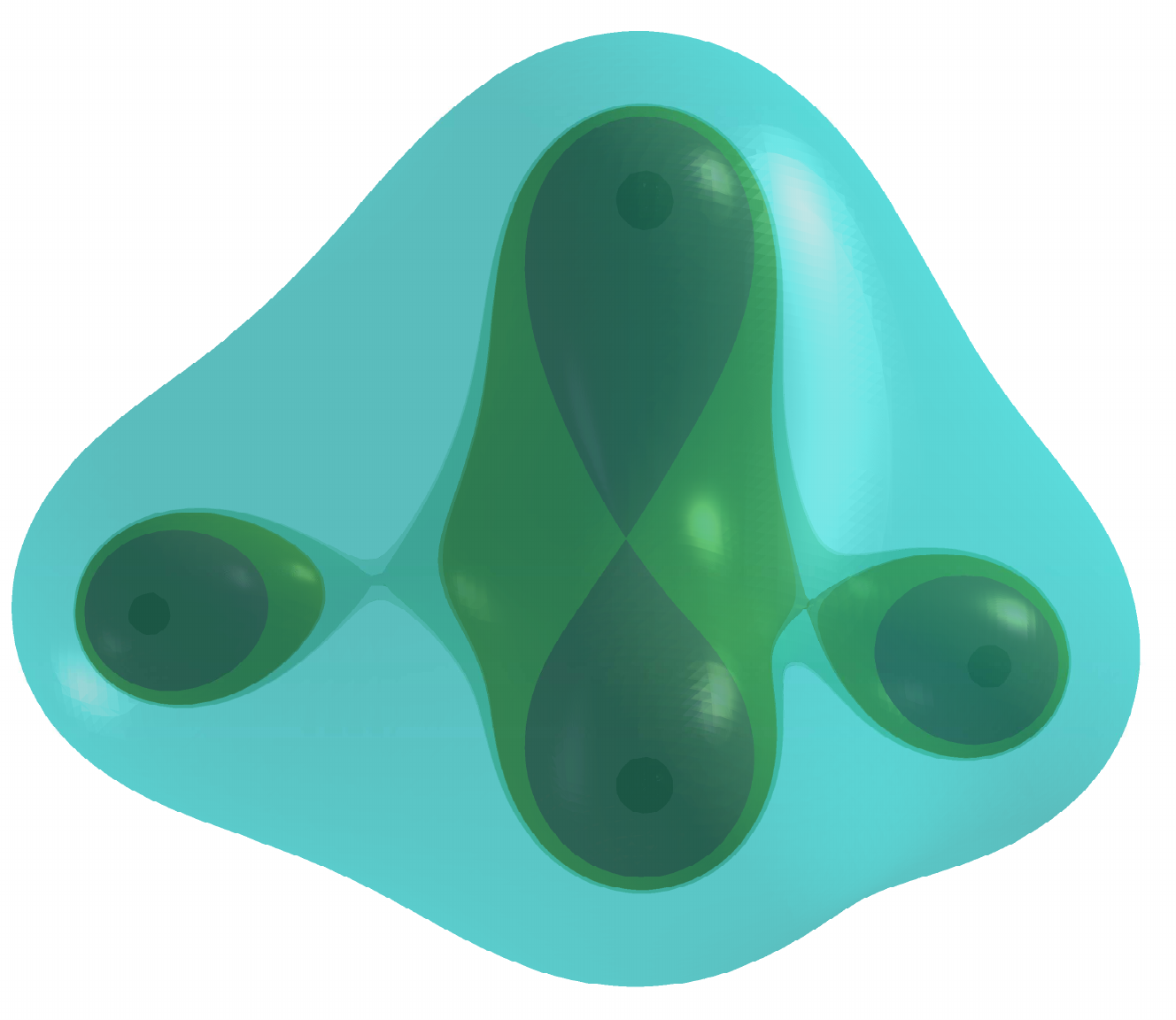}
\caption{A generic big lemniscate configuration for $r=4$ surrounded by a non singular level set. The four points are still visible at the interior of the lemniscates.}
\end{figure}

In the case where the points $w_1, \dots, w_r $ lie in an affine plane contained in $ \RR^N$, the situation is easy to analyse, see
corollary \ref{orthodec},
and, like in the case $N=2$, if $f$ is a Morse function, we have just $r-1$ critical points of (negativity) index 1.

The following are our main theorems.

\begin{theo}\label{index}
Let $w_1, \dots, w_r \in \RR^N$ be distinct points, not lying in a (real) affine plane, and consider the functions
$$F :  \RR^N \ra \RR^+, F(x) : =  \prod_1^r |x-w_j|^2, \ \  f(x) : = \log F(x) = \sum_1^r  \log (|x-w_j|^2).$$

Then at every critical point of $f(x)$ (resp.: of $F(x)$) the Hessian has positivity at least $N-1$.

\noindent
In particular, the index of negativity can only be $0$ or $1$.

\noindent
 Moreover, the critical points are isolated.  
\end{theo}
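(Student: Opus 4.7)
The plan is a three-step argument: compute the Hessian explicitly, derive the positivity claim from a trace--eigenvalue bound, and treat isolation separately via complexification. A direct calculation gives
$\mathrm{Hess}\bigl(\log|x-w_j|^2\bigr) = (2/\rho_j)\,I - (4/\rho_j^2)(x-w_j)(x-w_j)^T$
with $\rho_j := |x-w_j|^2$, so summing over $j$ yields $H(x) = 2\sigma(x)\,I - 4 A(x)$, where $\sigma(x) = \sum_j 1/\rho_j$ and $A(x) = \sum_j \rho_j^{-2}(x-w_j)(x-w_j)^T \succeq 0$. The key identity is
$$\mathrm{tr}\bigl(A(x)\bigr) = \sum_{j=1}^r \frac{|x-w_j|^2}{\rho_j^2} = \sum_{j=1}^r \frac{1}{\rho_j} = \sigma(x).$$

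Let $\mu_1\geq\mu_2\geq\dots\geq\mu_N\geq 0$ be the eigenvalues of $A$. Since $\sum_i \mu_i = \sigma$, we have $\mu_1 + \mu_2 \leq \sigma$, hence $\mu_2 \leq \sigma/2$. If equality held, then $\mu_1 = \mu_2 = \sigma/2$ and $\mu_3 = \dots = \mu_N = 0$, so $\mathrm{rank}(A)\leq 2$; but $\mathrm{im}(A) = \mathrm{span}\{x-w_j : j=1,\dots,r\}$ has dimension $\geq 3$ by the non-coplanarity hypothesis (the $w_j$ span an affine subspace of dimension $\geq 3$, and no critical point can coincide with any $w_j$ since $f$ is singular there). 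This contradiction gives $\mu_2 < \sigma/2$ strictly, so $H = 2\sigma I - 4A$ has at most one negative eigenvalue and at least $N-1$ strictly positive ones. The corresponding statement for $F$ follows from $\mathrm{Hess}(F)(y) = F(y)\cdot\mathrm{Hess}(f)(y)$ at any critical point $y$ of $f$ (where $F(y) > 0$).

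For isolation, the real-analytic critical set $Z = \{\nabla f = 0\}\subset\RR^N\setminus\{w_1,\dots,w_r\}$ satisfies $\dim T_x Z \leq \dim\ker H(x) \leq 1$ at every smooth point, so every irreducible component has dimension $\leq 1$. To exclude a critical arc I would complexify: setting $P_j(z) = \sum_k(z_k - w_{jk})^2$, clearing denominators turns the gradient equation into the polynomial system
$$\sum_{j=1}^r (z - w_j) \prod_{i\neq j} P_i(z) = 0$$
of $N$ equations of degree $2r-1$ on $\CC^N$, whose complex zero set $Z^\CC$ contains $Z$. A real critical arc would complexify to a complex curve in $Z^\CC$ tangent to the kernel of the complex Hessian (same formula with $P_j$ in place of $\rho_j$). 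I would then analyze the behaviour of the system at infinity in $\mathbb{P}^N$: the top-degree term is $r\,z\cdot Q(z)^{r-1}$ with $Q(z) = \sum_k z_k^2$, so positive-dimensional components must lie over the isotropic quadric $\{Q=0\}$, which meets $\RR^N$ only at the origin; this should exclude a real arc.

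The main obstacle is precisely this isolation step: a rank-$(N-1)$ Hessian is a priori consistent with a smooth curve of degenerate critical points, so the exclusion of such a curve needs genuinely global input beyond the local index bound, either algebraic (intersection-theoretic control of the gradient system in $\mathbb{P}^N$) or complex-analytic (rigidity of the plurisubharmonic extension $z\mapsto\sum_j\log|z-w_j|^2$ of $f$ to $\CC^N$). By contrast, the trace--eigenvalue argument for the positivity bound is elementary and uses the non-coplanarity hypothesis only through the mild statement $\dim\mathrm{span}\{x-w_j\}\geq 3$.
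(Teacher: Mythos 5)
Your Hessian computation and the trace--eigenvalue argument for the positivity bound are correct, and they constitute a genuinely different and more elementary route than the paper's. The paper writes the Hessian as $Q + \bar Q + 2\mathcal{L}$ for a suitably chosen linear complex structure on $\RR^N$ (after embedding into an even-dimensional space), proves via the Fubini--Study computation that the Levi form $\mathcal{L}$ is positive definite whenever the $w_j$ do not lie on a complex line, and then shows that positivity $\leq N-2$ would permit a compatible complex structure whose Levi form is not positive definite. Your observation that $H = 2\sigma I - 4A$ with $\mathrm{tr}(A) = \sigma$, $A \succeq 0$, and $\mathrm{rank}(A) = \dim\mathrm{span}\{x-w_j\} \geq 3$ forcing $\mu_2 < \sigma/2$ replaces all of that with a few lines of linear algebra, works uniformly in all dimensions (the paper treats odd $N$ by an auxiliary embedding and corollary \ref{orthodec}), and in fact proves the stronger statement that the positivity is at least $N-1$ at \emph{every} point of $\RR^N \setminus \{w_1,\dots,w_r\}$, not only at critical points. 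For $F$ you should add the one-line remark that the critical points of $F$ which are not critical points of $f$ are exactly the $w_j$, where the Hessian of $F$ equals $2\prod_{i\neq j}|w_j-w_i|^2$ times the identity, hence is positive definite.

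The isolation step, however, has a genuine gap, and it is exactly where you flag it. Your local bound $\dim\ker H(x) \leq 1$ only shows that the critical set has components of dimension $\leq 1$; combined with the generalized Morse lemma (normal form $u_1^2+\cdots+u_{N-1}^2+F(u_N)$) and the Gauss-type lemma that critical points lie in the convex hull of the $w_j$ (hence the critical set is compact), the problem reduces to excluding embedded \emph{circles of local minima}. Your proposed exclusion via the behaviour of the complexified gradient system at infinity cannot work as stated: a compact real curve of critical points complexifies to a complex affine curve whose points at infinity lie precisely on the isotropic quadric $\{Q=0\}$ --- compare the circle $x_1^2+x_2^2=1$, whose projective closure meets infinity at $(1:\pm i:0:\cdots:0)$ --- so the conclusion that positive-dimensional components of $Z^{\CC}$ sit over $\{Q=0\}$ at infinity is perfectly consistent with the existence of a real circle of critical points and yields no contradiction. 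The paper supplies the missing global input in Theorem \ref{MorseCairns}, in two ways: a Morse-theoretic argument showing that every smooth connected component of a level set of $f$ is a sphere (Theorem \ref{spheres}: for $c \gg 0$ the level sets are spheres, and $b_1$ can never return to $0$ once it becomes positive, whereas a circle of minima would produce nearby level components that are $S^{n-2}$-bundles over $S^1$ with $b_1 \geq 1$); or an argument via Douglas' solution of the Plateau problem, spanning the putative circle of minima by a harmonic, generically conformal disk $\varphi$ and contradicting the maximum principle for the subharmonic function $f\circ\varphi$, whose subharmonicity is exactly the strict plurisubharmonicity of $f$ with respect to adapted complex structures. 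Either way this is a substantial additional ingredient, not a routine verification, so your proof of the third assertion is incomplete.
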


\begin{theo}\label{extra}

Let $w_1, \dots, w_r \in \RR^N$ and let
$$F :  \RR^N \ra \RR^+, F(x) : =  \prod_1^r |x-w_j|^2, \ \  f(x) : = \log F(x) = \sum_1^r  \log (|x-w_j|^2)$$
be as in theorem \ref{index}.

\begin{enumerate}
\item Assume that  $F$ is a (local) Morse function: then $F(x)$ has $r$ absolute minima, $h$ local minima, and exactly $r+h-1$
critical points of (negativity) index 1.
\item
There are examples already in $\RR^3$, where $h$ can be arbitrarily large.
\end{enumerate}
\end{theo}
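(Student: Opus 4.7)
\textbf{Plan for Theorem \ref{extra}.}

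For part (1), the plan is to apply Morse theory to the proper, smooth, bounded-below function $F\colon\RR^N\to[0,\infty)$. Under the Morse hypothesis every critical point is non-degenerate and, by Theorem \ref{index}, has index $0$ or $1$. The $r$ zeros $w_j$ of $F$ are non-degenerate minima, since near $w_j$ one has $F(x)\sim\bigl(\prod_{k\ne j}|w_j-w_k|^2\bigr)|x-w_j|^2$. Let $h$ denote the number of remaining local minima and $s$ the number of saddles. For any $c$ larger than every critical value of $F$, the gradient flow on $\{F\ge c\}$ deformation-retracts $\RR^N$ onto $\{F\le c\}$, so $\{F\le c\}$ is contractible. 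Since only $0$- and $1$-handles appear, Morse theory gives
\[
1=\chi(\{F\le c\})=(r+h)-s,
\]
and hence $s=r+h-1$.

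For part (2), I will construct examples in $\RR^3$ with arbitrarily many extra local minima using \emph{symmetric octahedral cages}. Let $U=\{\pm e_1,\pm e_2,\pm e_3\}\subset\RR^3$ and set $f_0(y)=\sum_{u\in U}\log|y-u|^2$. By symmetry $\nabla f_0(0)=0$; a direct calculation yields
\[
\mathrm{Hess}\,f_0(0)=\sum_{u\in U}\bigl(2\,I-4\,u u^T\bigr)=12\,I-8\,I=4\,I,
\]
so $0$ is a non-degenerate local minimum of $f_0$. The analogous $\RR^2$ computation produces the zero matrix, so the dimensional hypothesis $N\ge 3$ is essential here.

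Now fix $h$ and choose widely separated points $c_1,\dots,c_h\in\RR^3$. Let $W=\bigsqcup_{i=1}^h(c_i+U)$, so $r=6h$, and $f(x)=\sum_{w\in W}\log|x-w|^2$. Writing $x=c_i+y$, decompose $f(c_i+y)=f_0(y)+g_i(y)$, where $g_i$ collects the contributions of the distant cages. Both $\nabla g_i(0)$ and $\mathrm{Hess}\,g_i(0)$ tend to $0$ as $\min_{j\ne i}|c_i-c_j|\to\infty$. Since $\mathrm{Hess}\,f_0(0)=4\,I$ is invertible, the implicit function theorem applied to $\nabla f$ yields, for sufficiently separated centers, a unique critical point $x_i^*$ near $c_i$ whose Hessian is close to $4\,I$, hence positive definite; each $x_i^*$ is thus a local minimum distinct from every $w_j$. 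This produces $h$ extra local minima. A final small generic perturbation of $W$, justified by Sard's theorem, makes $F$ globally Morse while each $x_i^*$ persists by non-degeneracy. The main obstacle is the base computation: one needs a cage with isotropic second-moment tensor, and positivity of the Hessian sum relies on the dimensional inequality $N\ge 3$, where the coefficient $1-2/N$ becomes positive; once the base case is in hand, spreading the construction to $h$ widely separated cages is routine perturbation theory.
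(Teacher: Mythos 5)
Your proof is correct, but part (2) follows a genuinely different route from the paper's. For part (1) you and the paper do the same thing: the paper simply remarks that the count $s=r+h-1$ is ``a direct consequence of Theorem \ref{index} and standard Morse theory,'' and your Euler-characteristic argument (contractible sublevel set built from $r+h$ zero-cells and $s$ one-cells, hence a tree) is exactly that standard argument, spelled out. For part (2) the paper does \emph{not} use widely separated copies of the octahedron. It constructs a single configuration of $3h$ points on an equilateral triangular prism in $\CC\times\RR$: starting from an auxiliary polynomial $P$ with $P'=\bigl(\prod_j(X-r_j)(X-s_j)\bigr)(X-r_h)$, it factors $P=\prod_j\bigl((X-a_j)^2+b_j^2\bigr)$ and places the triple $w_j^i=(\xi^i b_j,a_j)$, $\xi=e^{2\pi i/3}$, so that $F(0,t)=P(t)^3$ and the $\ZZ_3$-symmetry forces the Hessian to split; this yields $h$ nondegenerate non-absolute minima and $h-1$ saddles at \emph{preassigned} points $(0,r_j)$, $(0,s_j)$ on the axis, with only $3h$ points (versus your $6h$), no ``sufficiently separated'' quantifier, and it feeds the paper's conjecture that $h$ is linearly bounded in $r$. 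Your construction instead takes the paper's octahedron example (its hypercube example for $N=3$, which it credits to Motzkin--Walsh) as a base case --- your Hessian computation $4I$ agrees with the paper's coefficient $2(N-2)$ --- and globalizes it by a clustering/implicit-function-theorem argument. That is softer but perfectly valid, and arguably more robust: it works verbatim from any single configuration with one extra minimum.

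One step you should not wave away: the final perturbation making $F$ a local Morse function while retaining the $h$ minima is not a bare application of Sard's theorem. The paper devotes a section to this, introducing the critical variety $\sC\sR$, showing its projection to the configuration space is generically a submersion (via Tarski--Seidenberg and Sard), and then proving the persistence of nondegenerate local minima under small perturbation of the $w_j$ (Proposition \ref{minimastability}). Your argument needs the same input --- the prism configuration in the paper has exactly the same issue --- so you should either cite that density result or sketch the transversality set-up; ``justified by Sard's theorem'' alone leaves a gap, since Sard applies to the projection of the critical variety, not directly to $F$.
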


\begin{rem}
Part (1) of the above theorem is a direct consequence of theorem \ref{index} and standard Morse theory. Part (2) is shown in section \ref{preassigned}, see in particular proposition \ref{hminima}.
\end{rem}

\noindent
 The results are based, once again, on elementary complex analysis: but this time in several variables.

 The first idea is to take an isometric  embedding of $\RR^N$
 into $\CC^n$ ($n = [\frac{N-1}{2}]+1$).
  This is crucial, since on a complex vector space any real bilinear form can be written as the sum
  $Q + \sL + \bar{Q}$
  where $Q$ is a complex bilinear form, and $\sL $ (the Levi form) is Hermitian:
  the easiest case being $n=1$, where, if $ z = x+iy$, $a,b,c \in \RR$,
  $$  (a+ib) z^2 +  (a-ib) \bar{z}^2 + c z \bar{z} = 2 a (x^2 -y^2) - 4b xy + c (x^2 +y^2) .$$

\noindent
 Then we prove (under the assumption that $w_1, \ldots , w_r$ are not contained in a  complex line), using the classical Fubini-Study form,
  that the function $f$ is strictly plurisubharmonic, i.e., its Levi form $\sL $  is strictly positive definite.

\noindent
  The second trick is then to choose an appropriate isometric embedding as above, in order to  prove the statement about the positivity of the Hessian at each critical point.
  
   The first statement, that 
 the critical points have positivity at least $N-1$, put together with  the generalized Morse lemma, shows that
 there are local analytic coordinates $u_1, \dots, u_N$ such that $f$ has one of these two local normal forms:
\begin{equation}
f(u) =  u_1^2 + \dots + u_{N-1}^2 \pm  u_N^k;
\end{equation}
 \begin{equation}
 f(u) =  u_1^2 + \dots + u_{N-1}^2.
 \end{equation}
  In the first case the critical points are isolated.  In the second case, we use the fact that the critical set is 
   compact.  The compactness of the set of critical points is shown by the generalization of a theorem of Gauss:
  lemma \ref{gauss} asserts  that the critical points lie in the convex hull of the points
 $w_1, \dots, w_r$.
  
  Hence in the second case if the critical points are not isolated, 
  we would have   smooth curves of minima which are diffeomorphic to circles.
  
  The occurrence of these curves is excluded in the appendix (theorem \ref{MorseCairns}) where we give two different proofs,
  one using  Morse theory and the other using Douglas'  solution of the Plateau problem.

  For theorem \ref{extra}, we just use topology and Morse theory, and we exploit symmetry in order to produce
  `extra' local (but not global) minima.

\noindent
  The topological configuration is then easily described by the graph whose edges are the saddle points of $F$,
  in a similar fashion to  in \cite{catanesepaluszny}.

  The second main purpose of this paper is to propose as a theme of investigation the description of the configurations
  of generic big and small lemniscate configurations.

\noindent
  Even if we cannot use the Riemann existence theorem as in real dimension two, our first main theorem 
  \ref{index} yields
  a very strong information.

 \begin{defin} \label{generating}\
 \begin{enumerate}
 \item Define $\sG \sL (r,N)$ as the open set of the space  $(\RR^N)^r$of $r$ (distinct) points in $\RR^N$
 such that  the function $f(x) = \sum_{k=1}^r \log |x-w_k|^2$ is a global Morse function, i.e. we have a
 generic big lemniscate configuration $\Ga(f)$.
 \item We say that two big lemniscate configurations  $\Ga(f_1)$, $\Ga(f_2)$ have {\emph the same topological type}
 if there is a homeomorphism of the pair  $(\RR^N, \Ga(f_1))$ with the pair  $(\RR^N, \Ga(f_2))$.

 \noindent
 We have then a map from the set $\pi_0( \sG \sL (r,N))$ of the connected components of the set of
 lemniscate generic r-tuples of points to the set of topological types.
 \item  Denote by $b(r,N)$ the number of connected components of $\sG \sL (r,N)$ and by $a(r,N)$ the corresponding
 number of topological types of the big lemniscate configurations, by $c(r,N)$ the corresponding
 number of topological types of the small lemniscate configurations.

\noindent
 We define the corresponding generating functions as
 $$B_N (t) : = \sum_r  \frac{b(r+2,N)}{r!} t^r , \  A_N (t) : = \sum_r   \frac{a(r+2,N)}{r!} t^r, $$
 $$ C_N (t) : = \sum_r   c(r+2,N)  t^r.$$

\noindent
Similarly, define $b(r,N,h)$ the number of connected components of $\sG \sL (r,N)$ where $f$ has $h$ local minima,
and define similarly $$a(r,N,h), \ c(r,N,h), \ B_{N,h}(t), A_{N,h}(t),  C_{N,h}(t).$$
\end{enumerate}
 \end{defin}

\begin{figure}[h!]
\centering
\includegraphics[trim=6cm 11cm 6.5cm 10cm, clip,draft=false,width=8cm,height=7cm]{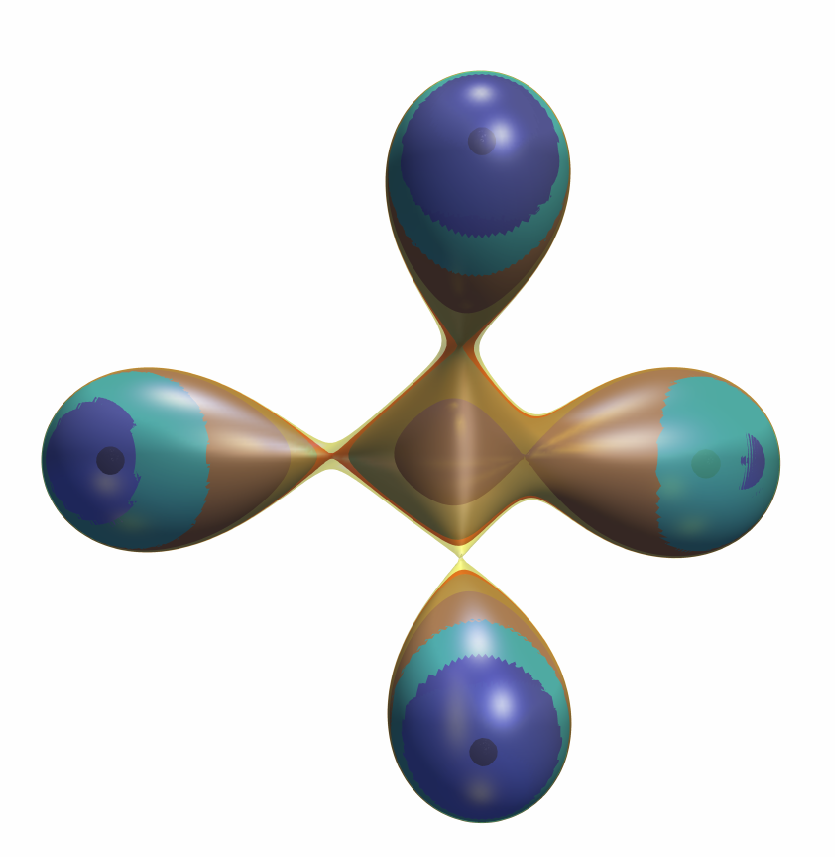}
\caption{Another generic big lemniscate configuration for $r=4$. }
\end{figure}

 In the appendix to \cite{catanesepaluszny} it is shown that, in dimension $m=2$, where $h=0$, $b(r,2) = a(r,2)$
 and the function
 $$A_2(t) = B_2(t) = \frac{1}{1 -  \sin t}.$$
 More complicated results were shown for the number of small lemniscate configurations.

 In this view, these are the questions which we would like to pose.

 \begin{question} \label{question}
 \begin{enumerate}
 \item
 Given $(r,N)$, which is the maximal number $M(r,N) = max \{h\}$ of (non global) local minima for the function $f$?
 \item
 What is the form of the generating functions $A_N (t) , B_N(t) , B_{N,h}(t), A_{N,h}(t)$?
 \item
 Is it true that the $A_N = B_N$ as in the case $N=2$?

 \end{enumerate}
 \end{question}

 To shed  light on the above questions, let us observe that the connected components of the configuration space of $r$ distinct points
 consist of several domains,
  separated by walls of two different types.

  The first type of walls contain as general points  r-tuples $w_1, \dots , w_r$ such that the associated function $f$
    has a  simple singularity of the form $u_1^2 + \dots + u_{N-1}^2+ u_N^3$;
   a pair of nondegenerate critical points, of respective indices 0, 1, go to disappear when crossing the wall in one direction: we shall call these
   {\em walls of quantitative type,} since the number of critical points changes.

   The second type of walls are those where two critical values become equal: here crossing the wall the topological type
   of the big lemniscate configuration may change, hence we shall call these {\em walls  of qualitative type.}

   Finally, concerning the first question, we get examples with $4$ points in $\RR^3$ and $h=1$, and, for every $h\geq 2$,  with $r= 3h$ points
    in $\RR^3$ and $h$ non global minima. These examples suggest the conjecture that $h$ may be bounded by a linear function  $c_1 r + c_2$.

  Let's end this introduction by describing a straightforward but potentially quite useful application of the  Gauss-type lemma \ref{gauss}, showing that the critical
  points lie in the convex hull of the points
 $w_1, \dots, w_r$.

 \begin{theo} \label{convexhull}
 1) Let $\Omega \in \RR^N$ be a bounded domain and let $w_1, \ldots , w_r \in \RR^N $ be $r$ pairwise distinct points. Consider
$$
f \colon \bar{\Omega} \rightarrow \RR \cup \{-\infty\}, \ \  f(x) = \sum_{k=1}^r \log |x-w_k|^2.
$$
Then all maxima of f (in $\bar{\Omega}$) are contained in $\partial \Omega$.

\noindent
2) Moreover, if the closure $\bar{\Omega} $  euqals the convex hull $ Conv(\{w_1, \ldots , w_r\})$ of the points $w_1, \ldots , w_r$, then all maxima of $f|\bar{\Omega}$ are contained in $\partial \Omega \setminus \mathcal{F}$, where $\mathcal{F}$ is the union of   the interior parts of the
faces of $\partial \Omega$ of dimension at least two ($\iff$ all maxima are contained in one dimensional faces of $\partial \Omega$).
 \end{theo}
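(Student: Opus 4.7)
The plan is to deduce both parts from the maximum principle for subharmonic functions: applied in $\RR^N$ for Part 1, and applied in the affine hulls of successively smaller faces for Part 2.

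For Part 1, I would first observe that each summand $\log|x-w_k|^2$ is classically subharmonic on $\RR^N$: a direct computation gives $\Delta\log|x-w_k|^2 = 2(N-2)/|x-w_k|^2\ge 0$ wherever it is smooth, and the $-\infty$ singularity at $w_k$ is compatible with upper semi-continuity. Hence $f$ is subharmonic on $\RR^N$ and is not constant on $\bar\Omega$ (indeed $f(w_j)=-\infty$). The maximum principle then forces every maximum of $f|_{\bar\Omega}$ to lie on $\partial\Omega$.

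For Part 2, I propose a downward induction on the dimension of the face. Fix a face $F$ of $\partial\Omega$ with $k := \dim F \ge 2$, let $V$ be its affine hull (so $V\cong\RR^k$), let $\pi_V$ denote orthogonal projection onto $V$, and consider $\tilde f:=f|_V$. Writing each $w_j = \pi_V(w_j)+b_j$ with $b_j\perp V$, a routine calculation in $V$ yields
\[
\Delta_V\log|y-w_j|^2 \;=\; \frac{2(k-2)\,|y-\pi_V(w_j)|^2 \;+\; 2k\,|b_j|^2}{\bigl(|y-\pi_V(w_j)|^2+|b_j|^2\bigr)^2},
\]
which is nonnegative for $k\ge 2$, and strictly positive as soon as $w_j\notin V$ (that is, $b_j\ne 0$). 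Since $F$ is a proper face of $\bar\Omega=\Conv(\{w_1,\dots,w_r\})$, at least one vertex $w_j$ of the hull lies outside $V$, so summing over $j$ gives $\Delta_V\tilde f>0$ on the relative interior of $F$ (off the finite set $\{w_j\}\cap F$, where $\tilde f=-\infty$ and no maximum can occur). The strong maximum principle in $V$ then forbids a maximum of $\tilde f$ on $F$ in the relative interior, so it must sit on $\partial F$, a union of strictly lower-dimensional faces of $\bar\Omega$. Iterating from $k=\dim\bar\Omega-1$ down to $k=2$ shows that every maximum of $f$ on $\bar\Omega$ lies in the $1$-skeleton of $\partial\Omega$, that is, in $\partial\Omega\setminus\mathcal{F}$; since $f(w_j)=-\infty$, the maxima in fact sit on the interiors of the $1$-dimensional faces.

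The only non-routine ingredient is the Laplacian identity above, which I expect to be the main (modest) obstacle; the combinatorial fact that a proper face of a convex hull omits a vertex of the hull is elementary, and the degenerate situation in which $\Conv(\{w_j\})$ is at most $2$-dimensional in $\RR^N$ causes no trouble, as then $\mathcal{F}=\emptyset$ and Part 2 collapses to Part 1.
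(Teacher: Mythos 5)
Your proof is correct, but it takes a genuinely different route from the paper's. The paper deduces part 1 from the fact that $f$ is plurisubharmonic with respect to any compatible complex structure on $\RR^N$ (after padding to even dimension), and proves part 2 by contradiction: through a putative maximum $x_0$ in the relative interior of a face of dimension $\geq 2$ it places a real $2$-plane $\Pi$, chooses a complex structure making $\Pi$ a complex line, notes that $f|_\Pi$ is then subharmonic with an interior local maximum, concludes $f$ is constant on $\Pi$, and contradicts $f(x)\to+\infty$ as $x\to\infty$ in $\Pi$. You instead work entirely with real Laplacians: ordinary subharmonicity of $f$ on $\RR^N$ for part 1, and for part 2 the restriction of $f$ to the $k$-dimensional affine hull $V$ of the face, where your identity $\Delta_V\log|y-w_j|^2=\bigl(2(k-2)|y-\pi_V(w_j)|^2+2k|b_j|^2\bigr)/\bigl(|y-\pi_V(w_j)|^2+|b_j|^2\bigr)^2$ (which I checked) is strictly positive once some $w_j$ lies off $V$ --- guaranteed because $\Omega$ has nonempty interior in $\RR^N$. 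This buys you a local contradiction (an interior maximum of a $C^2$ function with strictly positive Laplacian is impossible by the second-derivative test), so you never need the paper's global ``constant on $\Pi$ versus $f\to\infty$'' step, and your argument is self-contained rather than leaning on the Levi-form and adapted-complex-structure machinery of Lemma \ref{leviform} and Proposition \ref{positivity}; specialized to $k=2$ your computation is exactly the real form of the paper's complex-line restriction. Two trivial points to tidy: your formulas presuppose $N\geq 2$ (for $N=1$ the summands are concave and the statement fails, but the whole paper assumes $N\geq 2$); and in part 1 the non-constancy of $f$ on $\Omega$ should be justified by real-analyticity together with $f\to+\infty$ at infinity rather than by $f(w_j)=-\infty$, since no $w_j$ need lie in $\bar\Omega$. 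Also, the downward induction over face dimensions is unnecessary: since the relative interiors of the proper faces partition $\partial\Omega$, your argument applied once to the face whose relative interior contains the maximum already concludes.
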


 \begin{rem}
 Of course it is very  interesting from the point of view of physics to consider also the case of non logarithmic potentials,
 of the type
 $$   f(x) = \sum_{k=1}^r   |x-w_k|^{\alpha},$$
 and one can ask analogous questions to \ref{question} in the Newtonian  and in  the electrostatic case $\alpha= - 1$.
 As mentioned in the abstract, the existence or not of curves of local extremals, and the upper bound for the number of critical points are related to  famous classical problems
 posed by Morse-Cairns (\cite{cairns-morse}  and by Maxwell \cite{maxwell}  for the case of electrostatic potentials.

 We refer to \cite{shapiro} for new results in this direction.

 \noindent
 Our slogan here is that, once methods from complex analysis can be introduced, the problems become drastically simplified,
 and simple and beautiful answers can be found.
 But it is not clear  that one can use methods of complex analysis   in  more general situations, even restricting
 to open sets of parameters whose complements are of measure zero, and even in dimension n=2.

 \end{rem}

\section{The Hermitian Levi form}

\begin{definition}
Let $U \subset \CC^n$ be a domain and let $f \colon U \rightarrow \RR$ be a function, which is twice continuously differentiable. Moreover, let $z_0 \in U$ be a point. The Hermitian form  $\mathcal{L}_{f, z_0}$ given by
$$
\mathcal{L}_{f, z_0} (w):=\sum_{i,j=1}^n \frac{\partial^2 f(z_0)}{\partial z_i \partial \bar{z}_j} w_i \bar{w}_j, \ \ w = \begin{pmatrix} w_1\\ w_2\\ . \\. \\. \\w_n \end{pmatrix} \in \CC^n,
$$
is called the {\em (Hermitian) Levi form of $f$ at $z_0$}.
\end{definition}

Here we use the standard formalism: if $z=x+iy$, then
$$
\frac{\partial }{\partial z_j}  = \frac 12 (\frac{\partial }{\partial x_j} - i \frac{\partial }{\partial y_j}) \ , \ \  \frac{\partial }{\partial \bar z_j}  = \frac 12 (\frac{\partial }{\partial x_j} + i \frac{\partial }{\partial y_j}) \ .
$$

Then we have the following
result related to  the Fubini-Study metric.

\begin{lemma}\label{log}
 Let  $U \subset \CC^n \setminus \{0\}$.
Then the Hermitian Levi form  $\mathcal{L}_{f, z}$ of $f(z) : = \log|z|^2$ is positive semidefinite for each $0\neq z \in U$.

\noindent
Moreover, at each point $0\neq z \in U$, the Levi form $\mathcal{L}_{f, z}$ of $f$ at $z$ has positivity $n-1$, and the kernel is the line $\CC  z$.
\end{lemma}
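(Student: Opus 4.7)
The plan is to compute $\mathcal{L}_{f,z}$ directly from the definition and then recognize the Cauchy--Schwarz inequality.

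First I would compute $\partial_i \log|z|^2 = \bar z_i / |z|^2$, using $|z|^2 = \sum_k z_k \bar z_k$. Differentiating again with respect to $\bar z_j$ gives
\begin{equation*}
\frac{\partial^2 f}{\partial z_i \, \partial \bar z_j}(z) \; = \; \frac{\delta_{ij}}{|z|^2} \;-\; \frac{\bar z_i\, z_j}{|z|^4}.
\end{equation*}
Substituting into the definition of the Levi form and using $\sum_{i,j} \bar z_i z_j w_i \bar w_j = \bigl(\sum_i w_i \bar z_i\bigr)\overline{\bigl(\sum_j w_j \bar z_j\bigr)} = |\langle w,z\rangle|^2$, where $\langle w,z\rangle := \sum_i w_i \bar z_i$ is the standard Hermitian inner product, I get
\begin{equation*}
\mathcal{L}_{f,z}(w) \;=\; \frac{1}{|z|^2}\Bigl(|w|^2 \;-\; \frac{|\langle w,z\rangle|^2}{|z|^2}\Bigr).
\end{equation*}

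Second, I would apply the Cauchy--Schwarz inequality $|\langle w,z\rangle|^2 \le |w|^2 |z|^2$, which shows immediately that $\mathcal{L}_{f,z}(w) \ge 0$, proving positive semidefiniteness. Equality in Cauchy--Schwarz holds iff $w$ and $z$ are $\CC$-linearly dependent, i.e.\ iff $w \in \CC z$. Hence the kernel of $\mathcal{L}_{f,z}$ is exactly the complex line $\CC z$, which has complex dimension $1$, so the positivity is $n-1$.

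There is no serious obstacle here; the content is essentially that $f = \log|z|^2$ is a local potential for the Fubini--Study form on $\PP^{n-1}$ pulled back along $\CC^n\setminus\{0\} \to \PP^{n-1}$, and the one-dimensional kernel is the tangent direction to the fibers of this projection. The only thing to be careful about is the conjugation convention in writing $\sum \bar z_i z_j w_i \bar w_j$ as the squared modulus of a Hermitian inner product, which is handled by the identity displayed above.
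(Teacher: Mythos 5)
Your proof is correct and follows essentially the same route as the paper: both compute the mixed second derivatives to obtain $\mathcal{L}_{f,z}(w)=\frac{1}{|z|^4}\bigl(|z|^2|w|^2-|\langle w,z\rangle|^2\bigr)$ and then conclude by Cauchy--Schwarz, with the equality case giving the kernel $\CC z$ and hence positivity $n-1$. The remark about the Fubini--Study potential matches the paper's own framing.
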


\begin{proof}
We denote, for $v, w \in \CC^n$, the {\em standard Hermitian product} by
$$
\langle v,w \rangle := \sum_{k=1}^n v_k \bar w_k.
$$
Then for $z \in U$ and $w \in \CC^n$ we have
\begin{multline}
\mathcal{L}_{f,z}(w) = \sum_{i,j=1}^n \frac{\partial^2(\log(\sum_{k=1}^n z_k \bar z_k))}{\partial z_i \partial \bar z_j} w_i \bar w_j = \sum_{i,j=1}^n \frac{\partial}{\partial z_i}  ( \frac{z_j}{|z|^2} ) w_i \bar w_j = \\
= \sum_{i,j=1}^n \frac{|z|^2 \delta_{ij} - \bar z_i z_j}{|z|^4} w_i \bar w_j = \frac{1}{|z|^4} (|z|^2|w|^2 - |\langle w,z \rangle |^2).
\end{multline}
From the Cauchy-Schwartz inequality it follows now
\begin{itemize}
\item $\mathcal{L}_{f,z} \geq 0$ for all, $0 \neq z \in U$, and
\item $\mathcal{L}_{f,z}(w) = 0$ if and only if $w \in \CC z$.
\end{itemize}
\end{proof}

\noindent
We consider now the following situation: let $w_1, \ldots , w_r \in \CC^n$ be $r$ different points and consider the functions
$$F \colon \CC^n \setminus \{w_1, \ldots , w_r \} \rightarrow \RR, $$
respectively
$$f \colon \CC^n \setminus \{w_1, \ldots , w_r \} \rightarrow \RR$$
given by
\begin{itemize}
\item $F(z):= \prod_{i=1}^r |z-w_i|^2 = \prod_{i=1}^r F_i$, respectively
\item $f(z):= \sum_{i=1}^r \log |z-w_i|^2 = \sum_{i=1}^r f_i$.
\end{itemize}

Then we have the following:
\begin{lemma}\label{leviform} \
\begin{enumerate}
\item $f(z):= \sum_{i=1}^r \log |z-w_i|^2$ is plurisubharmonic, i.e., $\mathcal{L}_{f,z} \geq 0$ for all $z \neq 0$.

\item $\mathcal{L}_{f,z} > 0$ for all $z \neq 0$ except if all the points $w_1, \ldots , w_r$ are contained in an affine complex line.

\item  If $w_1, \ldots , w_r$ are contained in an affine complex line $L$, then  for all $z \in L$ the Levi form $\mathcal{L}_{f,z} $
has exactly nullity $1$, in the direction of $L$.
\end{enumerate}
\end{lemma}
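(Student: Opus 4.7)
The plan is to reduce all three statements to Lemma \ref{log} (the single-point case) by translation and superposition. Write $f = \sum_{i=1}^r f_i$ with $f_i(z) := \log|z - w_i|^2$. Since the Levi form is invariant under translation and additive in $f$, we have $\mathcal{L}_{f,z} = \sum_{i=1}^r \mathcal{L}_{f_i,z}$, and by Lemma \ref{log} applied to the translated coordinate $z - w_i$, each $\mathcal{L}_{f_i,z}$ is positive semidefinite with kernel equal to the complex line $\CC(z - w_i)$. Part (1) then follows immediately: a sum of positive semidefinite Hermitian forms is positive semidefinite.

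For part (2), suppose $\mathcal{L}_{f,z}(w) = 0$ for some $w \neq 0$. Since each summand $\mathcal{L}_{f_i,z}(w) \geq 0$ and their sum vanishes, we must have $\mathcal{L}_{f_i,z}(w) = 0$ for every $i$. By the kernel description from Lemma \ref{log}, this forces $w \in \CC(z - w_i)$ for all $i$, i.e., each vector $z - w_i$ is a complex scalar multiple of the fixed nonzero vector $w$. Equivalently, $w_i \in z + \CC w$ for every $i$, so all the $w_i$ lie on the affine complex line through $z$ with direction $w$. Contrapositively, if the $w_i$ are not contained in any affine complex line, then $\mathcal{L}_{f,z} > 0$ at every $z$ where $f$ is defined.

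For part (3), assume the $w_i$ all lie on a common affine complex line $L$ with complex direction $v \in \CC^n \setminus \{0\}$, and let $z \in L \setminus \{w_1, \dots, w_r\}$. Then each difference $z - w_i$ is a nonzero element of $\CC v$, so $\CC(z - w_i) = \CC v$ for every $i$. Hence by Lemma \ref{log} the kernel of each $\mathcal{L}_{f_i,z}$ is the common line $\CC v$. Since $\mathcal{L}_{f,z}$ is a sum of positive semidefinite forms all having the same kernel $\CC v$, its kernel is exactly $\CC v$, giving nullity one in the direction of $L$.

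There is no serious obstacle: the whole proof is a direct application of Lemma \ref{log} together with the elementary linear algebra fact that a sum of positive semidefinite Hermitian forms has kernel equal to the intersection of the individual kernels. The only point requiring attention is the geometric rephrasing in (2), where ``$w \in \CC(z - w_i)$ for all $i$'' must be correctly translated into ``all $w_i$ lie on a common affine complex line'', which is immediate once one fixes $z$ as a reference point.
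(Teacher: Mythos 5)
Your proposal is correct and follows essentially the same route as the paper: decompose $f=\sum_i f_i$, apply Lemma \ref{log} to each translated summand, and use that a sum of positive semidefinite Hermitian forms vanishes on a vector exactly when every summand does, so the kernel is the intersection $\bigcap_i \CC(z-w_i)$, which is nonzero precisely when the $w_i$ lie on a common affine complex line. No gaps.
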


\begin{proof}
If $z \neq w_k$, then the Levi form $\mathcal{L}_{f_k,z}$ of $f_k$ is $\geq 0$ in $z$ and its kernel is the line $\CC(z-w_k)$ by lemma \ref{log}. Since
$$
\mathcal{L}_{f,z} = \sum_{k=1}^r \mathcal{L}_{f_k,z},
$$
we see that
\begin{itemize}
\item $\mathcal{L}_{f,z} \geq 0$ for all $z \neq w_1, \ldots , w_r$, and
\item $v \in \ker(\mathcal{L}_{f,z}) \ \iff \ v \in \CC (z-w_k) \ \forall \ k=1, \ldots , r.$
\end{itemize}
Therefore $\mathcal{L}_{f,z}$ has non trivial kernel if and only if all the vectors $z-w_1, \ldots , z-w_r$ are proportional. This holds if and only if there is an element $u \in \CC^n$ such that $w_1, \ldots , w_r \in z + \CC u$.

3) Moreover, if  $v \in \ker(\mathcal{L}_{f,z}) \setminus\{0\}$, $v \in \CC u$, i.e., $v$ lies in the direction of $L$.

\end{proof}

\section{Linear complex structures}
Consider now $X = \RR^{2n}$ together with an $\RR$-valued symmetric bilinear form
$$
H \colon \RR^{2n} \times \RR^{2n} \rightarrow \RR.
$$
\begin{rem}
A {\em complex structure} (or a {\em $\CC$-structure}) on $X$ is a Hodge decomposition
$$
X \otimes _{\RR} \CC = V \oplus \bar{V},
$$
where $V \subset X \otimes _{\RR} \CC$ is a complex sub(-vector-)space, such that
$$
X = \{ v+ \bar{v} | v \in V \}.
$$
\end{rem}
We can extend $H$ to a symmetric $\CC$-bilinear form on $X \otimes _{\RR} \CC$, which we denote by $H_{\CC}$.  I.e.,  for $x=v+\bar{v}, \  y= w+\bar{w} \in X$ we have: $H(x,y) = H_{\CC}(v+\bar{v}, w+ \bar{w})$.

More precisely, we get:
\begin{equation}\label{decomp}
H(x,y) = H_{\CC}(v+\bar{v}, w+ \bar{w}) = H_{\CC}(v, w) + H_{\CC}(\bar{v}, \bar{w}) + H_{\CC}(v,  \bar{w}) + H_{\CC}(\bar{v}, w).
\end{equation}

Using the above formula, we have the following well known
\begin{lemma} \label{decbil}
Let $H$ be a symmetric (real) bilinear form on $\RR^{2n}$, endowed with a given complex structure (i.e., $\RR^{2n} \cong \CC^n$). Then there is a decomposition
$$
H = Q + \bar{Q} + \mathcal{L},
$$
where $Q$ is a symmetric $\CC$-bilinear form and $\mathcal{L}$ is a Hermitian form.
\end{lemma}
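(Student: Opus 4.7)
The plan is to simply read off the three summands from the decomposition already recorded in equation \eqref{decomp}, and then verify the required algebraic properties of each piece. Writing $x = v + \bar v$ and $y = w + \bar w$ with $v, w \in V$, equation \eqref{decomp} presents $H(x,y)$ as a sum of four terms, one for each bi-type in the Hodge splitting $V \oplus \bar V$. I would define
\[
Q(x,y) := H_{\CC}(v, w), \qquad \bar Q(x,y) := H_{\CC}(\bar v, \bar w), \qquad \mathcal{L}(x,y) := H_{\CC}(v, \bar w) + H_{\CC}(\bar v, w),
\]
so that $H = Q + \bar Q + \mathcal L$ holds tautologically. Under the $\RR$-linear isomorphism $X \cong V$ given by $v \mapsto v + \bar v$, each of these is a bilinear map on the complex vector space $V$.

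It remains to check the structural properties. $\CC$-bilinearity and symmetry of $Q$ are immediate, since $H_{\CC}$ is the symmetric $\CC$-bilinear extension of $H$ and $V$ is a $\CC$-subspace of $X \otimes_\RR \CC$; the same argument applied to $\bar V$ yields the analogous properties of $\bar Q$. That $\bar Q$ really is the complex conjugate of $Q$ follows from the reality of $H$: the conjugation $\xi \mapsto \bar\xi$ on $X \otimes_\RR \CC$ fixes $X$ pointwise and swaps $V$ with $\bar V$, whence $H_{\CC}(\bar v, \bar w) = \overline{H_{\CC}(v,w)}$. Finally, for the Hermitian symmetry of $\mathcal L$, I would compute on one hand, using symmetry of $H_{\CC}$,
\[
\mathcal{L}(y, x) = H_{\CC}(w, \bar v) + H_{\CC}(\bar w, v) = H_{\CC}(\bar v, w) + H_{\CC}(v, \bar w),
\]
and on the other hand, using reality,
\[
\overline{\mathcal{L}(x, y)} = \overline{H_{\CC}(v, \bar w)} + \overline{H_{\CC}(\bar v, w)} = H_{\CC}(\bar v, w) + H_{\CC}(v, \bar w),
\]
so that $\mathcal L(y,x) = \overline{\mathcal L(x,y)}$ as required.

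There is no real obstacle here; the only mild subtlety is keeping track of the identification of $X$ with $V$ as a complex vector space (so that a form which is a priori defined on $V \times V$ or on $V \times \bar V$ can be read as a form on $X \times X$). Once the bookkeeping is done, every property is a direct consequence of the $\CC$-bilinearity and symmetry of $H_{\CC}$ together with the reality of $H$.
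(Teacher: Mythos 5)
Your proposal is correct and follows exactly the paper's own argument: both read off $Q := H_{\CC}(v,w)$ and $\mathcal{L} := H_{\CC}(v,\bar w) + H_{\CC}(\bar v, w)$ from equation \eqref{decomp} and use the reality of $H$ to identify $H_{\CC}(\bar v,\bar w)$ with $\overline{Q}$. The only difference is that you spell out the verification of the Hermitian symmetry of $\mathcal{L}$, which the paper leaves implicit.
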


\begin{proof}
This follows from equation (\ref{decomp}) setting $Q := H_{\CC}(v,w)$ and  $\mathcal{L}:= H_{\CC}(v,  \bar{w}) + H_{\CC}(\bar{v}, w)$. Then $\bar{Q}(v,w) = \overline{H_{\CC}(v,w)} = H_{\CC}(\bar{v}, \bar{w})$.
\end{proof}

\begin{rem}\label{levi}
Let $U \subset \CC^n$ be a domain and let $g \colon U \rightarrow \RR$ be a function, which is twice continuously differentiable.  Applying lemma \ref{decbil} to the Hessian $H_g$ of $g$ in $z \in U$, we get:
$$
H_{g,z} = Q_{g,z} + \bar{Q}_{g,z} + 2 \mathcal{L}_{g,z},
$$

where the matrix of $Q_{g,z}$ is given by $( \frac{\partial ^2 g}{\partial z_j \partial z_k})_{1 \leq j,k \leq n}$ and $\mathcal{L}_{g,z}$ is the Levi form of $g$ in $z$.
\end{rem}

\begin{rem}
If the points $w_1, \ldots , w_r$ are contained in a complex line, then from the formula above (remark \ref{levi}) it follows that the non degenerate critical points of $f(z):= \sum_{i=1}^r \log |z-w_i|^2$ have negativity 1 (cf. also lemma 1.1. in \cite{catanesepaluszny}).
\end{rem}

\section{The case where the real affine span of $w_1, \ldots , w_r$ has dimension $\geq 3$}
We can prove the following
\begin{prop}\label{positivity}
Consider $X = \RR^{2n}$ with the standard Euclidean metric and let $H$ be a symmetric bilinear form with positivity $p \leq 2n-2$. Then there is a
$\CC$-structure on $X$ such that
\begin{itemize}
\item  there is a $\CC$-basis $v_1, \ldots, v_n$ (for this $\CC$-structure),
\item $\{v_1, \bar{v}_1, \ldots , v_n, \bar{v}_n \}$ is a unitary basis for the standard Hermitian product on $X \otimes_{\RR} \CC \cong \CC^{2n}$,
\item the Levi form $H_{\CC}(v ,\bar{v})$ is not positive definite.
\end{itemize}
\end{prop}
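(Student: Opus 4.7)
The plan is to translate the statement into a question about orthogonal complex structures on $\RR^{2n}$ and then exploit the existence, under the hypothesis $p \leq 2n-2$, of a real subspace of dimension $\geq 2$ on which $H$ is non-positive. First I would observe that specifying a $\CC$-structure $V \subset X \otimes_\RR \CC$ together with a $\CC$-basis of $V$ whose union with its conjugate is unitary for the standard Hermitian extension of the Euclidean product is equivalent to specifying an orthogonal complex structure $J \in O(2n)$ with $J^2 = -I$. Indeed, given such a $J$, pick any $\RR$-orthonormal basis of the form $e_1, Je_1, e_3, Je_3, \ldots, e_{2n-1}, Je_{2n-1}$ (which exists since $J$ orthogonal and $J^2=-I$ force $e \perp Je$); then $v_k := \tfrac{1}{\sqrt 2}(e_{2k-1} - iJe_{2k-1})$ provides the desired unitary basis, as a direct check on Hermitian inner products shows.

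The key identity is obtained by writing $x \in X$ uniquely as $x = v + \bar v$ with $v = \tfrac12(x - iJx) \in V$, so that $Jx = iv + \overline{iv}$. Applying the decomposition (\ref{decomp}) to $(x,x)$ and to $(Jx, Jx)$, the pure $Q$ and $\bar Q$ contributions pick up opposite signs (via $i^2 = -1$) and cancel in the sum, leaving
\begin{equation*}
H(x,x) + H(Jx, Jx) \;=\; 4\, H_\CC(v, \bar v) \;=\; 4\, \mathcal{L}(v).
\end{equation*}
Thus $\mathcal{L}$ is positive definite on $V$ if and only if the real quadratic form $x \mapsto H(x,x) + H(Jx, Jx)$ is positive definite on $X$.

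With this reformulation the construction is immediate. By Sylvester's law of inertia, positivity $p \leq 2n-2$ produces a real subspace $N \subset \RR^{2n}$ of dimension $\geq 2$ on which $H$ is negative semi-definite. I would pick two vectors $e_1, e_2 \in N$ that are orthonormal for the Euclidean product, declare $Je_1 = e_2$ and $Je_2 = -e_1$ on the plane they span, and extend $J$ to any orthogonal complex structure on the $(2n-2)$-dimensional Euclidean orthogonal complement (possible since $2n-2$ is even). Evaluating the key identity at $x = e_1$ then gives
\begin{equation*}
H(e_1, e_1) + H(J e_1, J e_1) \;=\; H(e_1, e_1) + H(e_2, e_2) \;\leq\; 0,
\end{equation*}
so $\mathcal{L}(v_1) \leq 0$ for $v_1 = \tfrac{1}{\sqrt 2}(e_1 - i e_2)$, and $\mathcal{L}$ is therefore not positive definite.

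The only mildly delicate point, and hence the main obstacle to handle carefully, is the verification of the cancellation of the $Q$ and $\bar Q$ terms in the key identity; this is a short calculation from Lemma \ref{decbil} using the $\CC$-bilinearity of $Q$ and the fact that the $\CC$-linear extension of $J$ acts as $+i$ on $V$ and $-i$ on $\bar V$. Once this identity is in hand, everything else is bookkeeping, and the freedom in extending $J$ to the Euclidean complement of the chosen plane makes the construction robust.
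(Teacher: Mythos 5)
Your proposal is correct and follows essentially the same route as the paper: both proofs produce two Euclidean--orthonormal vectors $e_1,e_2$ with $H(e_1,e_1)+H(e_2,e_2)\leq 0$, pair them into a complex line of the chosen structure, and observe that $H_{\CC}(v_1,\bar v_1)=\tfrac12\bigl(H(e_1,e_1)+H(e_2,e_2)\bigr)\leq 0$. The only (cosmetic) differences are that you locate the two non-positive directions via Sylvester's law of inertia inside a maximal non-positive subspace rather than by simultaneously diagonalizing $H$ against the Euclidean metric, and that you package the computation as the identity $H(x,x)+H(Jx,Jx)=4H_{\CC}(v,\bar v)$ for an orthogonal complex structure $J$, which is the same cancellation of the $Q$ and $\bar Q$ terms that underlies the paper's equation for $H_{\CC}(v_j,\bar v_j)$.
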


\begin{proof}
Since the positivity $p$ of $H$ fulfills $p \leq 2n-2$, there is an orthonormal basis $e_1, \ldots , e_{2n}$ (w.r.t. the Euclidean metric) such that
\begin{itemize}
\item $H(e_i,e_j) = 0$ for $i \neq j$,
\item if $\lambda_j:= H(e_j,e_j)$, then $\lambda_1, \lambda_2 \leq 0$.
\end{itemize}
Set
\begin{itemize}
\item $\hat{v}_j:= e_{2j-1} + i e_{2j}$ for $1 \leq j \leq n$, and
\item $v_j:= \frac{1}{\sqrt{2}} \hat{v}_j$, $1 \leq j \leq n$.
\end{itemize}
Then $v_1, \bar{v}_1, \ldots , v_n, \bar{v}_n$ is a unitary basis of $\CC^{2n} = X \otimes_{\RR} \CC$ endowed with the standard Hermitian product. Moreover, we have
\begin{equation}\label{negativity}
H_{\CC}(v_j,\bar{v}_j) = \frac 12 H_{\CC}(e_{2j-1} + i e_{2j}, e_{2j-1} - i e_{2j}) = \frac 12 (\lambda _{2j-1} + \lambda_{2j}).
\end{equation}
In particular $H_{\CC}(v_1,\bar{v}_1) \leq 0$.
\end{proof}

As a consequence of the above considerations we get the following:

\begin{prop}\label{negativity}
Assume that the real affine span of the points $w_1, \ldots , w_r \in \RR^{2n}$ has (real) dimension $\geq 3$. Then at each critical point $x$ of
$$
f(z):= \sum_{i=1}^r \log |z-w_i|^2 = \sum_{i=1}^n f_i
$$
the index of positivity  is at least $2n-1$.
\end{prop}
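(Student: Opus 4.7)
The strategy is a proof by contradiction: turn a hypothetical shortfall in positivity of the Hessian into a failure of strict plurisubharmonicity, and then contradict the strict positivity of the Levi form established in Lemma \ref{leviform}.

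First I would suppose, for the sake of contradiction, that at some critical point $x$ of $f$ the Hessian $H_{f,x}$ has positivity $p \leq 2n-2$. Applying Proposition \ref{positivity} to $H_{f,x}$ produces a complex structure on $\RR^{2n}$ together with vectors $v_1, \ldots, v_n$ such that $\{v_j, \bar v_j\}_{j=1}^n$ is unitary for the standard Hermitian product and the Hermitian piece in the decomposition of Lemma \ref{decbil} is not positive definite (concretely, $H_\CC(v_1, \bar v_1) \leq 0$). By Remark \ref{levi}, that Hermitian piece is precisely $2\mathcal{L}_{f,x}$, the Levi form of $f$ viewed as a function on $\CC^n$ via this complex structure. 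Hence $\mathcal{L}_{f,x}$ would fail to be positive definite.

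To close the argument, I would invoke the dimension assumption: a complex affine line, for any complex structure on $\RR^{2n}$, is a real affine subspace of real dimension $2$. Since by hypothesis the real affine span of $w_1, \ldots, w_r$ has real dimension at least $3$, the points $w_i$ cannot lie on any complex affine line, no matter which complex structure we have chosen. Lemma \ref{leviform}(2) then forces $\mathcal{L}_{f,x}$ to be strictly positive definite, contradicting the preceding step. Therefore the positivity of $H_{f,x}$ must be at least $2n-1$.

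The only delicate point, and the one I would check carefully, is that the complex structure produced by Proposition \ref{positivity} is really admissible for Lemma \ref{leviform}, i.e., it is orthogonal for the Euclidean metric so that the holomorphic/antiholomorphic splitting used to define $\mathcal{L}_{f,x}$ matches the one used to decompose $H_{f,x}$. This is built into that proposition via the explicit formula $v_j = \tfrac{1}{\sqrt{2}}(e_{2j-1} + i e_{2j})$ starting from a Euclidean orthonormal basis, which yields a genuine isometric identification $\RR^{2n} \cong \CC^n$ with the standard Hermitian product. Once this compatibility is in place, the result is essentially a direct combination of Propositions \ref{positivity} and \ref{leviform}.
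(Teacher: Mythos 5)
Your proof is correct and follows essentially the same route as the paper: assume positivity $p \leq 2n-2$, invoke Proposition \ref{positivity} to build a metric-compatible complex structure whose Levi form is not positive definite, and contradict Lemma \ref{leviform}. You are in fact slightly more explicit than the paper in checking that the affine-span hypothesis rules out the points lying on a complex line for the chosen structure, which is exactly the detail needed to apply Lemma \ref{leviform}(2).
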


\begin{proof}
Consider the Hessian $H_f$ of $f$ at the critical point $x \in \RR^{2n}$ and assume that for the positivity $p$ it holds $p \leq 2n-2$. Then by proposition \ref{positivity} there is a complex structure on $X$ and a $\CC$-basis $v_1, \ldots, v_n$ such that  $\{v_1, \bar{v}_1, \ldots , v_n, \bar{v}_n \}$ is a unitary basis for the standard Hermitian product on $X \otimes_{\RR} \CC \cong \CC^{2n}$ and the Hermitian form $H_{f,{\CC}}(v ,\bar{v})$ in $x$ is not positive definite. But $H_{f,{\CC}}(v ,\bar{v}) = 2\mathcal{L}_{f,x}(v ,\bar{v})$, contradicting lemma \ref{leviform}.
\end{proof}
\begin{rem}
In particular, if $f:= \sum_{i=1}^r \log |z-w_i|^2$ is a local Morse function, it has only $h$ local minima in $\RR^{2n} \setminus \{w_1, \ldots , w_r \}$
and exactly $(h+r-1)$ other critical points, each with positivity $(2n-1)$ and negativity $1$.
\end{rem}

\begin{rem}
If we assume  that the r points $w_1, \ldots , w_r \in \RR^{2n}$ are contained in a real affine plane,
then without loss of generality we may assume:
\begin{itemize}
\item $X = \CC \times \CC^{n-1}$,
\item $w_1, \ldots , w_r \in \CC \times \{ 0\}$, $w_j = (\xi_j, 0)$, $\xi_j \in \CC$.
\end{itemize}
Then
$$
f(z):= \sum_{i=1}^r \log |z-w_i|^2 = \sum_{k=1}^n f_k,
$$
where $f_k (z) = f_k(z_1, z_2)= \log ( |z_1-\xi_k|^2 + |z_2|^2)$, $(z_1, z_2) \in  \CC \times \CC^{n-1}$.
\end{rem}

\section{Geometric properties of $f(x):= \sum_{i=1}^r \log |x-w_i|^2$}
We have the following extension of a classical result due to Gauss.

\begin{lemma}\label{gauss}
 Let $w_1, \ldots , w_r$ be $r$ different points in $\RR^N$. Then the critical points of
 $$
 f(x) = \sum_{k=1}^r \log |x-w_k|^2 = \sum_{k=1}^r f_k
 $$
 lie in the convex hull of $w_1, \ldots , w_r$.
\end{lemma}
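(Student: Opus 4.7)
The plan is to compute the gradient of $f$ explicitly and read off that any critical point is a convex combination of the $w_k$'s.

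First I would compute, for each $k$,
\[
\nabla f_k(x) = \nabla \log |x-w_k|^2 = \frac{2(x-w_k)}{|x-w_k|^2},
\]
which is valid for $x \neq w_k$ (and the critical points of $f$ automatically avoid the $w_k$'s, since $f \to -\infty$ there). Summing,
\[
\nabla f(x) = 2 \sum_{k=1}^r \frac{x-w_k}{|x-w_k|^2}.
\]
At a critical point $x$ this vanishes, so after collecting the $x$ term we get
\[
\Bigl(\sum_{k=1}^r \lambda_k\Bigr)\, x \;=\; \sum_{k=1}^r \lambda_k\, w_k, \qquad \lambda_k := \frac{1}{|x-w_k|^2}>0.
\]
Setting $\Lambda := \sum_k \lambda_k > 0$ and $\mu_k := \lambda_k/\Lambda$ yields $x = \sum_k \mu_k w_k$ with $\mu_k>0$ and $\sum_k \mu_k =1$, exhibiting $x$ as a convex combination of the $w_k$'s.

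As a conceptual cross-check (which could serve as an alternative proof and is robust to generalizations), suppose $x \notin \mathrm{Conv}(\{w_1,\ldots,w_r\})$. By the Hahn--Banach separation theorem there is a unit vector $u \in \RR^N$ and a constant $c$ such that $\langle x,u\rangle > c \geq \langle w_k,u\rangle$ for every $k$, and in particular $\langle x-w_k,u\rangle > 0$. Then
\[
\tfrac12\,\langle \nabla f(x), u\rangle = \sum_{k=1}^r \frac{\langle x-w_k,u\rangle}{|x-w_k|^2} > 0,
\]
so $x$ cannot be a critical point.

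There is no real obstacle here: the statement is essentially a direct generalization of the Gauss--Lucas theorem, and the key observation is simply that the weights $1/|x-w_k|^2$ appearing after dividing by $|x-w_k|^2$ are strictly positive, which is exactly what convex combinations require. The only care needed is to note that critical points of $f$ lie in the complement of $\{w_1,\ldots,w_r\}$ so that the gradient is defined in the first place.
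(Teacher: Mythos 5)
Your proof is correct and follows essentially the same route as the paper: compute $\nabla f(x)=2\sum_k (x-w_k)/|x-w_k|^2$, set it to zero, and read off $x$ as a convex combination of the $w_k$ with the strictly positive weights $|x-w_k|^{-2}$ normalized to sum to one. The separating-hyperplane remark is a nice additional sanity check but the core argument coincides with the paper's.
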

\begin{proof}
Let $x \in \RR^N \setminus \{w_1, \ldots, w_r \}$ and set $g_k(x):= |x-w_k|^2$. Then $f_k = \log g_k$ and
$$
\grad(f_k)(x) = \frac{2(x-w_k)}{|x-w_k|^2} = 2 (x_k -w_k) g_k^{-1}(x).
$$
Therefore $x$ is a critical point of $f$ if an only if
\begin{multline}
\sum_{k=1}^r (x-w_k)g_k^{-1}(x) =0 \iff x\sum_{k=1}^r g_k^{-1}(x) = \sum_{k=1}^r w_k g_k^{-1}(x) \iff \\
\iff x= \frac{\sum_{k=1}^r w_k g_k^{-1}(x)}{\sum_{k=1}^r g_k^{-1}(x)} = \sum_{k=1}^r (\frac{g_k^{-1}(x)}{\sum_{k=1}^r g_k^{-1}(x)})w_k
\end{multline}
Note that for $x \in \RR^N \setminus \{w_1, \ldots, w_r \}$ we have $g_k(x) > 0$, hence also
$$
t_k := \frac{g_k^{-1}(x)}{\sum_{k=1}^r g_k^{-1}(x)} >0,
$$
and $\sum_{k=1}^r t_k = 1$. This proves the claim.
\end{proof}

\begin{lemma}\label{HesseMatrix}
Let $w_1, \ldots , w_r$ be $r$ different points in $\RR^N$. The Hesse matrix of
$f(x) = \sum_{k=1}^r \log |x-w_k|^2$ in $x$ is given by
$$
H_{f,x} = 2 \sum_{k=1}^r \frac{|x-w_k|^2E_N - 2 (x-w_k) (x-w_k)^T}{|x-w_k|^4},
$$
where $E_N$ is the $N\times N$ identity matrix and $(x-w_k)^T = (x_1-w_{k,1}, \ldots , x_N - w_{k,N})$.
\end{lemma}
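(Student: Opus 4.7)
The plan is a completely direct calculation: by linearity of the Hessian, it suffices to compute $H_{f_k, x}$ for a single summand $f_k(x) = \log |x - w_k|^2$ and then sum over $k$. No geometric input is needed.

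First I would compute the gradient of $f_k$. Setting $g_k(x) := |x-w_k|^2$, we have $\grad g_k(x) = 2(x - w_k)$, and the chain rule for $f_k = \log g_k$ yields
$$
\grad f_k(x) = \frac{2(x-w_k)}{|x-w_k|^2},
$$
which is exactly the expression already used in the proof of lemma \ref{gauss}.

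Then I would differentiate once more using the quotient rule. Writing the $j$-th component of $\grad f_k$ as $2(x_j - w_{k,j}) \, g_k(x)^{-1}$, the $(i,j)$-entry of the Hessian is
$$
\frac{\partial^2 f_k}{\partial x_i \partial x_j}(x) = \frac{2\, \delta_{ij}\, |x-w_k|^2 \;-\; 4\, (x_i - w_{k,i})(x_j - w_{k,j})}{|x-w_k|^4}.
$$
Recognizing the rank-one term $(x_i - w_{k,i})(x_j - w_{k,j})$ as the $(i,j)$-entry of the outer product $(x - w_k)(x - w_k)^T$, this packages in matrix form as
$$
H_{f_k, x} = 2 \cdot \frac{|x-w_k|^2 E_N - 2(x - w_k)(x - w_k)^T}{|x - w_k|^4},
$$
and summing over $k = 1, \ldots, r$ gives the claimed formula.

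There is essentially no obstacle here: the lemma is a bookkeeping statement whose only subtlety is tracking the factor of $2$ coming from differentiating $|x-w_k|^2$ and the resulting factor of $4$ in the outer-product term, which combine cleanly with the overall factor of $2$ displayed in the statement.
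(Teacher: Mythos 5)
Your proposal is correct and is essentially identical to the paper's own proof: both differentiate the gradient $\grad f_k(x) = 2(x-w_k)/|x-w_k|^2$ entrywise with the quotient rule to get $(H_{f_k,x})_{ij} = 2\bigl(|x-w_k|^2\delta_{ij} - 2(x_j-w_{k,j})(x_i-w_{k,i})\bigr)/|x-w_k|^4$ and then sum over $k$. The factors of $2$ and $4$ are tracked correctly, so there is nothing to add.
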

\begin{proof}
In fact,
\begin{multline}
 (H_{f,x})_{ij} = \sum_{k=1}^r \frac{\partial^2(\log|x-w_k|^2))}{\partial x_i \partial x_j} = \sum_{k=1}^r \frac{\partial}{\partial x_i}  (\frac{2(x_j-w_{k,j})}{|x-w_k|^2} ) = \\
= 2 \sum_{k=1}^r \frac{|x-w_k|^2 \delta_{ij} - 2(x_j-w_{k,j})(x_i -w_{k,i})}{|x-w_k|^4}.
\end{multline}
\end{proof}

\begin{cor}\label{orthodec}
Assume that the points $w_1, \ldots , w_r $ lie in an affine subspace $V$. Without loss of generality
we may assume that we have a decomposition of $\RR^N$ in an orthogonal direct sum, i.e., $\RR^N = V \oplus V^{\bot}$ where $w_1, \ldots , w_r \in V$.

Then the critical points $x$ of $f$ lie in $V$ and the Hessian of $f$ is of the form
$H_{f} = H_{f|V} + H'$, where $H_{f|V}$ is the Hessian of $f|V$, and $H'$ is a positive definite quadratic form on $V^{\bot}$.
\end{cor}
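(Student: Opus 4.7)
The plan is to deduce both assertions directly from the two preceding lemmas: the convex-hull statement of Lemma~\ref{gauss} immediately forces critical points into $V$, and the explicit formula of Lemma~\ref{HesseMatrix} block-diagonalizes with respect to the orthogonal splitting $\RR^N = V \oplus V^\perp$.

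First I would handle the location of critical points. Since $w_1,\ldots,w_r \in V$ and $V$ is convex, the convex hull $\mathrm{Conv}(\{w_1,\ldots,w_r\})$ is contained in $V$. By Lemma~\ref{gauss}, every critical point of $f$ lies in this convex hull, hence in $V$.

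Next I would analyze the Hessian at a critical point $x \in V$. The key observation is that for each $k$ the vector $x - w_k$ lies entirely in $V$, so the rank-one operator $(x-w_k)(x-w_k)^T$ vanishes on $V^\perp$ and sends $V$ into $V$; consequently it has no cross components between $V$ and $V^\perp$. Decomposing $E_N = E_V + E_{V^\perp}$ and plugging into the formula of Lemma~\ref{HesseMatrix} gives
\begin{equation*}
H_{f,x} = 2\sum_{k=1}^r \frac{|x-w_k|^2 E_V - 2(x-w_k)(x-w_k)^T}{|x-w_k|^4} \;+\; 2\sum_{k=1}^r \frac{1}{|x-w_k|^2}\, E_{V^\perp}.
\end{equation*}
The first summand acts on $V$ and coincides, by the same lemma applied inside $V$ to the points $w_1,\ldots,w_r$, with the Hessian $H_{f|V,x}$ of the restriction $f|V$. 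The second summand acts on $V^\perp$ as a positive scalar multiple of the identity, since each coefficient $2/|x-w_k|^2$ is strictly positive (as $x \neq w_k$); call this contribution $H'$. Thus $H_f = H_{f|V} + H'$ with $H'$ positive definite on $V^\perp$.

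There is essentially no obstacle beyond the bookkeeping of the block decomposition; the only point requiring a moment of care is verifying that the $V$-block produced by the restriction of Lemma~\ref{HesseMatrix} really is the intrinsic Hessian of $f|V$, but this is immediate because $f|V(x) = \sum_k \log|x-w_k|^2$ has exactly the same functional form with $x,w_k$ regarded as elements of the inner-product space $V$.
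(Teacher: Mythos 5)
Your proof is correct and follows essentially the same route as the paper's: the location of the critical points comes from Lemma~\ref{gauss} together with the convexity of the affine subspace $V$, and the block decomposition of the Hessian comes from the explicit formula of Lemma~\ref{HesseMatrix}, using that the rank-one terms $(x-w_k)(x-w_k)^T$ vanish on $V^{\bot}$ when $x, w_k \in V$. The only cosmetic difference is that the paper argues summand by summand on each $f_k$ while you treat the whole sum at once; your explicit coefficient $2\sum_k |x-w_k|^{-2}$ on $V^{\bot}$ is the correct one.
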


\begin{proof}
Since the critical points of $f$ lie in the convex hull of the points $w_1, \ldots , w_r \in V$, it follows that they also lie in $V$.
It is easy to see that for the statement about the Hessian it suffices to prove the analogous statement for each summand $f_k$ of $f$. We have that the Hessian of $f_k$ equals to
$$
H_{f_k,x} = 2 ( \frac{|x-w_k|^2E_N - 2 (x-w_k) (x-w_k)^T}{|x-w_k|^4}).
$$
If $x$ is a critical point of $f$, then $x \in V$, i.e., $x = v + 0 \in V \oplus V^{\bot}$. Then (since also $w_k \in V$, the negative part of the Hessian of $f_k$ is zero on $V^{\bot}$)
$$
H_{f_k,x} (u_1, u_2) = H_{f|V}(u_1) + 2 |x-w_k|^2 (|u_2|^2).
$$
\end{proof}

The above considerations as well as proposition \ref{negativity} allow us now to prove the following result
\begin{theo}
Let $w_1, \ldots , w_r \in \RR^N$ be $r$ distinct points and let
$$
f \colon \RR^N \setminus \{w_1, \ldots , w_r\} \rightarrow \RR
$$ be given by
$$
 f(x) = \sum_{k=1}^r \log |z-w_k|^2.
$$
Then $f$ has only critical points of negativity 0 or 1.
\end{theo}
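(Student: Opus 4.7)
The plan is to reduce to Proposition \ref{negativity} via two uses of Corollary \ref{orthodec}. Let $V\subset\RR^N$ be the real affine span of $w_1,\dots,w_r$ and set $M=\dim V$. Corollary \ref{orthodec} shows that every critical point of $f$ lies in $V$ and that the Hessian there decomposes orthogonally as $H_f=H_{f|V}+H'$, with $H'$ positive definite on $V^\perp$. Consequently, the negativity of $H_f$ equals the negativity of $H_{f|V}$, and I may restrict attention to $f|V$ on $V\cong\RR^M$, where the $w_i$ now affinely span the ambient space.

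If $M=1$, the restriction to the line satisfies $f''(x)=-2\sum_k(x-w_k)^{-2}<0$ away from the $w_k$, so every critical point is nondegenerate of negativity exactly $1$. If $M=2$, identify $V\cong\CC$; the points lie on a complex line, each $\log|z-\xi_i|^2$ is harmonic on $\CC$, and by Remark \ref{levi} the Hessian of $f|V$ reduces to $Q+\bar Q$. Diagonalizing $Q$, any real form of this shape on $\CC$ has signature $(k_Q,k_Q)$ with $k_Q\in\{0,1\}$, so the negativity is at most $1$.

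If $M\ge 3$, I isometrically embed $\RR^M$ into $\RR^{2n}=\CC^n$ with $n=\lceil M/2\rceil$, so that $2n\ge M$. A second application of Corollary \ref{orthodec}, this time to the orthogonal splitting $\RR^{2n}=\RR^M\oplus(\RR^M)^\perp$, shows that the critical points of $f$ on $\RR^{2n}$ coincide with those of $f|_{\RR^M}$ and that their negativity indices agree. Since the $w_i$ still affinely span the $M\ge 3$-dimensional subspace $\RR^M\subset\RR^{2n}$, Proposition \ref{negativity} yields positivity at least $2n-1$, equivalently negativity at most $1$, at every such critical point, concluding the proof.

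The only delicate step is the bookkeeping: one must verify that Corollary \ref{orthodec} is legitimately applicable twice in succession and that the hypothesis of Proposition \ref{negativity} (real affine span of dimension $\ge 3$) is inherited after the embedding. Both checks follow immediately from the orthogonal block form of the Hessian, so no new analytic input beyond Proposition \ref{negativity} is required.
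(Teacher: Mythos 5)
Your proof is correct and follows essentially the same route as the paper: reduce to the affine span via Corollary \ref{orthodec}, pad to an even-dimensional ambient space by an isometric embedding, and invoke Proposition \ref{negativity} once the span has dimension at least $3$. The only real difference is cosmetic: where the span has dimension $\leq 2$ the paper cites Lemma 1.1 of \cite{catanesepaluszny}, whereas you reprove it inline via the $Q+\bar{Q}$ signature computation on $\CC$ (and a direct second-derivative computation on a line), which is a harmless self-contained substitute.
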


\begin{proof}
If $N=2n$ and the real affine span of $w_1, \ldots , w_r$ has dimension $\geq 3$ this follows from proposition \ref{negativity}. If $N= 2n-1$ and the real affine span of $w_1, \ldots , w_r$ has dimension $\geq 3$ then, we embed $\RR^N$ in $\RR^{2n}= \RR^N \times \RR$ and the claim follows from proposition \ref{negativity} and corollary \ref{orthodec}. If instead the  real affine span of $w_1, \ldots , w_r$ has dimension $\leq 2$, then we use lemma 1.1. of \cite{catanesepaluszny} and corollary \ref{orthodec}.

\end{proof}

\begin{proof}[Proof of Theorem \ref{index}] The first part follows from the above theorem.

To prove the second part we need the following generalized Morse Lemma (cf. e.g. Satz 5.5 in \cite{complexmanifolds}):

\begin{lemma}\label{localform} Let $\Omega \subset \RR^N$ be an open set and let $f : \Omega \to \mathbb{R}$ be a real analytic function.
Let $p \in \Omega$ be a critical point of $f$. Assume that  the Hessian $H_{f,p}$ has positivity index  $\geq N-1$.
Then there are local coordinates $u_1,\cdots, u_N$ centered at $p$ such that:
\[ f(u_1,\cdots,u_{N}) = u_1^2 + \cdots + u_{N-1}^2 + F(u_N) \, ,\]
where either $F(u_N)  = c  \pm u_N^d, \ d \geq 2$, or $ F(u_N) \equiv c, c \in \RR$.

\end{lemma}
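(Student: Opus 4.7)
The plan is to combine the real-analytic splitting lemma with a one-variable normalization. Since $H_{f,p}$ has positivity at least $N-1$, its rank is $\geq N-1$ and its kernel has dimension at most $1$. Diagonalizing $H_{f,p}$ by an orthonormal change of coordinates, I obtain initial coordinates $(x_1,\ldots,x_N)$ centered at $p$ in which the first $N-1$ diagonal entries of the Hessian are positive and the last is either positive, negative, or zero. If the last entry is nonzero, the classical Morse lemma directly gives $f = c + u_1^2 + \cdots + u_{N-1}^2 \pm u_N^2$, which is the desired form with $d=2$.

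The remaining case is when the kernel is one-dimensional, spanned by $\partial/\partial x_N$. Here I would apply the implicit function theorem to the system $\partial f / \partial x_i = 0$ for $i = 1,\ldots,N-1$: since the upper-left $(N-1)\times(N-1)$ block of $H_{f,p}$ is positive definite, the system is solvable for $(x_1,\ldots,x_{N-1})$ as a real analytic function $\varphi(x_N)$. Translating by $x_i \mapsto x_i - \varphi_i(x_N)$ puts the critical set locally along the $x_N$-axis. Expanding $f$ in the remaining variables with coefficients depending real analytically on $x_N$ and applying a parametric version of the Morse lemma (diagonalizing a positive definite symmetric matrix whose entries are analytic in $x_N$) brings $f$ to the form
\[ f(u_1,\ldots,u_N) = c + u_1^2 + \cdots + u_{N-1}^2 + F(u_N), \]
where $F$ is a real analytic function of one variable with $F(0) = F'(0) = F''(0) = 0$.

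Finally I would normalize $F$. If $F \equiv 0$, the desired form $F \equiv c$ holds. Otherwise real analyticity gives $F(u_N) = a_d u_N^d (1 + h(u_N))$ with $d \geq 3$, $a_d \neq 0$, and $h$ real analytic with $h(0) = 0$. Since $1 + h > 0$ near $0$, the function $(1+h)^{1/d}$ is well-defined and real analytic, so
\[ v \ := \ |a_d|^{1/d}\,(1+h(u_N))^{1/d}\,u_N \]
is a local real analytic change of coordinate with nonzero derivative at $0$, and it transforms $F$ into $c + \mathrm{sign}(a_d)\,v^d$, as required.

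The main obstacle is ensuring that all steps remain inside the real analytic category rather than merely $C^\infty$. The delicate point is the parametric Morse lemma: one must diagonalize a family of nondegenerate symmetric bilinear forms depending analytically on $x_N$ through an analytic change of $(x_1,\ldots,x_{N-1})$. This is classical and can be carried out using an analytic square root of a positive definite symmetric matrix (available through the holomorphic functional calculus applied to the analytic family, or equivalently through an analytic Cholesky factorization), but it is the step that requires the most care, and it is exactly the content of the reference cited in the statement.
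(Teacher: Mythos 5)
Your argument is correct and follows the standard splitting-lemma route; the paper itself gives no proof of this statement but refers to Satz 5.5 of the lecture notes \cite{complexmanifolds}, and the intended proof is exactly of your type (implicit function theorem to put the fibrewise critical locus on the $u_N$-axis using the invertible $(N-1)\times(N-1)$ block, a parametric completion of squares for the positive definite part, and then the analytic normalization $v = |a_d|^{1/d}(1+h)^{1/d}u_N$ of the one-variable remainder). The one point you rightly flag as delicate is indeed the crux: every change of coordinates must stay real analytic, because the subsequent application of the lemma rests on the dichotomy that the analytic function $F'$ is either identically zero or has an isolated zero, which fails in the merely $C^\infty$ category.
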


\noindent
Denote by $\mathcal{C}$ the set of critical points of $f$ and let $p \in \mathcal{C}$.
Since the positivity of $H_{f,p}$ is at least $N-1$, we have to consider four cases for $p$:

\begin{itemize}
\item
either $p$ is a non-degenerate critical point with negativity $1$ or
\item
$p$ is a non-degenerate local minimum or
\item
 $H_{f,p}$ is  positive semi-definite, with positivity $N-1$, and we can apply the above generalized Morse lemma since $F$ is real analytic.

 Since the critical points near $p$ are solutions of the system of equations
\[ \begin{cases} u_1 = 0 \, ,\\
\vdots \, \\
u_{N-1} = 0 \, ,\\
F'(u_{N}) = 0 \,
\end{cases}\]
 $p$ is an isolated critical point if $ F'(u_N) $ is not identically zero, and a local minimum iff $d$ is even and the sign equals $+1$;
 \item
if $ F'(u_N) \equiv 0$
 the set $\mathcal{C}$ is near $p$ a $1$-dimensional embedded submanifold defined by the equations $u_1 = \cdots = u_{N-1} = 0$.
Moreover, it is clear that the points of this $1$-dimensional submanifold are local minima.
Hence a connected component of $\mathcal{C}$ is either an isolated point or a $1$-dimensional embedded submanifold consisting of local minima.
Since $\mathcal{C}$ is compact (because it is closed and bounded because contained in the convex hull of the points $w_1, \cdots, w_r$)
 the $1$-dimensional connected components of $\mathcal{C}$ are embedded circles.
 \item
  Such curves of minima do not exist, by theorem \ref{MorseCairns}.
 \end{itemize}
\end{proof}

The following result, which proves a conjecture in computer vision, is now quite obvious.

\begin{cor} (= Theorem \ref{convexhull}). \\
1) Let $\Omega \in \RR^N$ be a bounded domain and let $w_1, \ldots , w_r \in \RR^N \setminus \partial{\Omega}$ be $r$ different points. Consider
$$
f \colon \bar{\Omega} \rightarrow \RR \cup \{-\infty\}, \ \  f(x) = \sum_{k=1}^r \log |x-w_k|^2.
$$
Then all maxima of f (in $\bar{\Omega}$) are contained in $\partial \Omega$.

\noindent
2) Moreover, if $\bar{\Omega} = \Conv(\{w_1, \ldots , w_r\})$ is the convex hull of the points $w_1, \ldots , w_r$, then all maxima of $f|\bar{\Omega}$ are contained in $\partial \Omega \setminus \mathcal{F}$, where $\mathcal{F}$ is the interior of the union of the faces of $\partial \Omega$ of dimension at least two ($\iff$ all maxima are contained in one dimensional faces of $\partial \Omega$).
\end{cor}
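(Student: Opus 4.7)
My plan is to deduce (1) directly from the Hessian positivity bound of Theorem \ref{index} (the coplanar exceptional case being covered by Corollary \ref{orthodec} together with the classical planar statement) and to deduce (2) by repeating the same argument for the restriction of $f$ to the affine hull of a face.

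For (1), Theorem \ref{index} asserts that at every critical point of $f$ the Hessian has positivity at least $N-1$; for $N \geq 2$ this forces at least one strictly positive eigenvalue, so $f$ has no local maxima in the interior of $\Omega$. By compactness of $\bar\Omega$ a finite global maximum is attained, and it must therefore lie on $\partial\Omega$.

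For (2), let $x_0$ be a maximum and let $F$ be the unique face of $\partial\Omega$ whose relative interior contains $x_0$; set $d := \dim F$ and let $V$ be the affine hull of $F$. Since the vertices of $\bar\Omega$ are among the $w_k$, where $f = -\infty$, one has $d \geq 1$. I will show $d \geq 2$ is impossible by running the argument of (1) for $f|_V$ on $\bar F \subset V$: a $V$-neighborhood of $x_0$ lies in $F \subset \bar\Omega$, so such an $x_0$ would be a local maximum of $f|_V$, contradicting a Hessian positivity bound of $\geq d - 1$ at critical points of $f|_V$. To establish this bound, decompose each $w_k = p_k + q_k$ with $p_k \in V$ and $q_k \in V^{\bot}$, so that $f|_V(v) = \sum_k \log(|v - p_k|^2 + c_k)$ with $c_k := |q_k|^2 \geq 0$; since $\bar\Omega$ is full-dimensional in $\RR^N$ while $\dim V = d < N$, not all $w_k$ lie in $V$ and some $c_k > 0$.

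The Levi form computation of Lemma \ref{log} generalises to the shifted potential $\log(|z - p|^2 + c)$: a direct calculation yields
\[
\mathcal{L}(w) = \frac{(|z - p|^2 + c)|w|^2 - |\langle w, z - p\rangle|^2}{(|z - p|^2 + c)^2},
\]
which by Cauchy--Schwarz is positive semidefinite, and strictly positive definite when $c > 0$. After isometrically embedding $V \hookrightarrow \CC^m$ (with $2m \in \{d, d+1\}$) and extending $f|_V$ to a function $\tilde f$ on $\CC^m$ by the same formula, strict plurisubharmonicity of $\tilde f$ holds for every orthogonal complex structure on $\RR^{2m}$, so the contrapositive of Proposition \ref{positivity} yields positivity $\geq 2m - 1$ for $H_{\tilde f, z}$ at every critical point $z$. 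A short check parallel to Corollary \ref{orthodec} shows that at $z \in V$ the gradient of $\tilde f$ lies in $V$ and the Hessian is block-diagonal, with $V$-block equal to $H_{f|_V, z}$ and $V^{\bot}$-block a positive multiple of the identity; hence critical points of $\tilde f$ in $V$ are precisely those of $f|_V$, and positivity of $H_{f|_V, x_0}$ is at least $(2m - 1) - (2m - d) = d - 1 \geq 1$, which contradicts $x_0$ being a local maximum of $f|_V$. The only non-formal step in this chain is the generalisation of Lemma \ref{log} to the shifted potential; once it is in place, the reduction from $(\RR^N, \bar\Omega)$ to $(V, \bar F)$ is purely formal.
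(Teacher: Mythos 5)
Your argument is correct, but it follows a genuinely different route from the paper's, especially for part (2). For part (1) the paper does not go through critical points at all: it simply observes that $f$ is plurisubharmonic (hence subharmonic) for any orthogonal complex structure on $\RR^N$ (taking $N$ even without loss of generality) and invokes the maximum principle on the bounded domain $\Omega$; your version instead quotes the Hessian positivity of Theorem \ref{index} at an interior maximum, which works but forces you to treat the coplanar case separately, as you note. For part (2) the divergence is more substantial. The paper picks a $2$-plane $\Pi$ through $x_0$ inside the face, chooses a complex structure $J$ making $\Pi$ a complex line, notes that $f|_{\Pi}$ is then subharmonic with an interior local maximum, concludes by the strong maximum principle that $f$ is constant on $\Pi$, and gets a contradiction from $f(x)\to\infty$ as $x\to\infty$ in $\Pi$ --- no restriction of the potential to the face's affine hull, and no new Levi form computation, is needed. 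You instead restrict $f$ to the full affine hull $V$ of the face, which produces the shifted potentials $\log(|v-p_k|^2+c_k)$, and you must then redo the Fubini--Study computation of Lemma \ref{log} for these (your formula $\mathcal{L}(w)=\bigl((|z-p|^2+c)|w|^2-|\langle w,z-p\rangle|^2\bigr)/(|z-p|^2+c)^2$ is correct, and strict positivity for $c>0$ follows from Cauchy--Schwarz exactly as you say), before rerunning the machinery of Proposition \ref{positivity} and Corollary \ref{orthodec} in dimension $d$. The payoff of your approach is a quantitative statement --- positivity at least $d-1$ for the Hessian of $f|_V$ at any of its critical points, i.e., a version of Theorem \ref{index} for the restricted potential, which is of independent interest --- at the cost of an extra lemma; the paper's approach is shorter because the restriction to a single complex line already suffices to exclude an interior maximum on the face. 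Both proofs are sound.
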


\begin{proof} Without lost of generality we can assume $N$ to be even. The proof of 1) follows from the fact that $f$ is a plurisubharmonic function w.r.t. any complex structure on $\RR^N$ compatible with the standard metric.
The proof of 2) is by contradiction. Assume that a local maximum $x_0$ of $f|\bar{\Omega}$ is in the interior of a face $A$ of dimension greater than one.  Then there is a $2$-dimensional affine plane $\Pi$ through $x_0$ contained in $A$. As in Proposition \ref{negativity} we can construct a complex structure $J$ compatible with the metric of $\RR^N$ such that $\Pi$ is a complex line w.r.t. $J$. Then the restriction of $f$ to $\Pi \cap A$  is subharmonic and has a local maximum at the interior point $x_0$. Thus $f$ must be constant on the affine plane $\Pi$. But this is not possible since $f(x)$ goes to infinity as $x \in \Pi$ goes to infinity.
\end{proof}

\section{Symmetries give rise to local minima}

Assume that $G \leq \mathbb{O}(N) := \{ A \in \Mat (N,N, \RR) | A^T = A^{-1} \}$ is a finite subgroup and that $\RR^N$ is an irreducible $G$-representation.

We choose a set of points  $\Sigma = \{ w_1, \ldots , w_r \} \subset \RR^N \setminus \{0\}$ which is a union of $G$-orbits. Hence the two functions
\begin{itemize}
\item $F(x):= \prod_{i=1}^r |x-w_i|^2 = \prod_{i=1}^n F_i$, respectively
\item $f(x):= \sum_{i=1}^r \log |x-w_i|^2 = \sum_{i=1}^n f_i$.
\end{itemize}
are $G$-invariant functions.

Since the origin $0 \in \RR^N$ is a fixed point of $G$, and since $f$ is $G$-invariant, it follows that $Df(0)$ is also $G$-invariant. By the irreducibility of $(\RR^N)^{\vee} \cong \RR^N$ as $G$-representation, it follows that $Df(0) = 0$, i.e., $0$ is a critical point of $f$.

Under the above assumptions we can now prove the following:
\begin{prop} \label{sym}
Let $G \leq \mathbb{O}(N)$ be a finite subgroup such that $\RR^N$ is an irreducible $G$-representation. Suppose that $\Sigma = \{ w_1, \ldots , w_r \} \subset \RR^N \setminus \{0\}$  is a union of $G$-orbits and that $\Sigma$ is not contained in an affine plane. Then $0$ is a local minimum of $F(x):= \prod_{i=1}^r |x-w_i|^2 $ resp. of $f(x):= \sum_{i=1}^r \log |x-w_i|^2$.
\end{prop}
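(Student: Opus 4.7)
\medskip

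\noindent\textbf{Proof plan for Proposition \ref{sym}.}

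The plan is to combine the symmetry of the configuration with Theorem \ref{index}: symmetry will force the Hessian of $f$ at $0$ to be a scalar multiple of the identity, and Theorem \ref{index} will force the scalar to be positive.

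First I would record that, as already observed just before the statement, $0$ is a critical point of both $f$ and $F$. Since $F(0) = \prod_{i=1}^r |w_i|^2 > 0$ (the points $w_i$ are nonzero) and $f = \log F$, the two Hessians at $0$ are related by $H_{F,0} = F(0)\,H_{f,0}$, so it is enough to show that $H_{f,0}$ is positive definite.

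Second, I would exploit the $G$-invariance. Because $f$ is $G$-invariant and $0$ is a fixed point of the linear action of $G$ on $\RR^N$, the symmetric bilinear form $H_{f,0}$ is $G$-invariant on $\RR^N$. Now $G \leq \mathbb{O}(N)$, so the standard inner product on $\RR^N$ is also a $G$-invariant symmetric bilinear form. Writing $H_{f,0}(u,v) = \langle A u, v\rangle$ with $A$ symmetric, $G$-invariance of $H_{f,0}$ amounts to $A$ commuting with every element of $G$. By Schur's lemma applied to the real irreducible representation $\RR^N$, the commutant $\mathrm{End}_G(\RR^N)$ is a real division algebra $D \in \{\RR, \CC, \HH\}$; since every element of $G$ is orthogonal, the imaginary units in $D$ (when present) are skew-adjoint, so the self-adjoint part of $D$ is exactly $\RR \cdot \mathrm{Id}$. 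Hence $A = \lambda \, \mathrm{Id}$ and $H_{f,0} = \lambda \, \langle\,\cdot\,,\,\cdot\,\rangle$ for some $\lambda \in \RR$.

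Third, I would invoke Theorem \ref{index}: since $\Sigma = \{w_1,\ldots,w_r\}$ is not contained in an affine plane, the Hessian $H_{f,0}$ has positivity at least $N-1$. But all eigenvalues of $\lambda \,\mathrm{Id}$ equal $\lambda$, so positivity $\geq N-1 \geq 1$ forces $\lambda > 0$. Therefore $H_{f,0}$ is positive definite and $0$ is a nondegenerate local minimum of $f$, hence of $F$.

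The only mildly subtle step is the Schur-lemma computation showing that a $G$-invariant symmetric bilinear form on a real irreducible representation with invariant inner product must be a scalar multiple of that inner product; this is what rules out the possibility that the positive direction of size $N-1$ guaranteed by Theorem \ref{index} is accompanied by one negative or null direction. Everything else is a direct assembly of results already available in the paper.
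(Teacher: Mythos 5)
Your proof is correct, and it reaches the conclusion by a genuinely different (and arguably cleaner) route than the paper's. Both arguments rest on the same two inputs --- the positivity bound of Theorem \ref{index} (available because $\Sigma$ is not contained in an affine plane) and the irreducibility of the $G$-action --- but they combine them differently. The paper first shows $H_{f,0}$ is nondegenerate (a nontrivial kernel would be a proper $G$-invariant subspace; the identically-zero case is excluded separately via the Levi form and Lemma \ref{leviform}), and then rules out signature $(N-1,1)$ by noting that $G$ would have to preserve the null cone of $H_{f,0}$ and hence its central line, again contradicting irreducibility. You instead prove at the outset that $H_{f,0}$ is a scalar multiple of the inner product, after which the positivity bound forces the scalar to be positive; this collapses the paper's case analysis (zero, Lorentzian, definite) into a single step. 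One remark on your Schur-lemma paragraph: you do not actually need the classification of the commutant as $\RR$, $\CC$ or $\HH$, nor the assertion that the imaginary units are skew-adjoint (which, as stated, requires a normalization argument). Since $A$ is real symmetric it has a real eigenvalue $\lambda$, and $\ker(A-\lambda\,\mathrm{Id})$ is a nonzero $G$-invariant subspace, hence all of $\RR^N$ by irreducibility, so $A=\lambda\,\mathrm{Id}$ directly. With that simplification your argument is complete, and the remaining steps (the identity $H_{F,0}=F(0)\,H_{f,0}$ at the critical point and the deduction $\lambda>0$ from positivity at least $N-1\geq 1$) are all sound.
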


\begin{proof}
If the Hessian $H_{f,0}$ of $f$ in $0$ is  identically zero, then by remark \ref{levi} the Levi form $\mathcal{L}_{f,0}$ is identically zero. Lemma \ref{leviform} implies then that $\Sigma$ is contained in an affine plane, a contradiction. Therefore $H_{f,0}$ is not identically zero, whence it  is non-degenerate, since otherwise $\ker H_{f,0}$ is a non-trivial $G$-invariant subspace of $\RR^N$, contradicting the irreducibility of the $G$-representation.

We know by proposition \ref{negativity} that the positivity of the Hessian $H_f$ at a critical point is at least $N-1$.  This means that either
\begin{itemize}
\item[i)] $H:=H_{f,0} > 0$, or
\item[ii)] the positivity of $H$ is $N-1$ and the negativity is $1$.
\end{itemize}
In the second case, there are Euclidean coordinates $(x_1, \ldots , x_{N-1},y)$ such that (up to a multiplicative constant), we have
$$
H(x_1, \ldots, x_{N-1},y) = \sum_{i=1}^{N-1} a_i x_i^2 -y^2, \ a_i >0.
$$
Then $G$ leaves the cone
$$
 \mathcal{C}:=\{ x \in \RR^N | H(x) = 0 \} =  \{ (x_1, \ldots, x_{N-1},y) \in \RR^N | y^2 = \sum_{i=1}^{N-1} a_i x_i^2  \}
 $$
  invariant. This implies that $G$ leaves invariant the central line $L:=\{(0, \ldots , 0, y) \in \RR^N | y \in \RR \}$ of $\mathcal{C}$, contradicting the irreducibility of the $G$-representation. Therefore, we have seen that $H >0$, i.e., $f$ has a local minimum in $0$.

\end{proof}

\begin{rem}
It is not clear that for this choice of points $w_1, \ldots , w_r$ the function  $f(z):= \sum_{i=1}^r \log |z-w_i|^2$ is a local  Morse function. At any rate, $f$ is never a global Morse function, since the critical points appear as orbits of the symmetry group, hence they do not have different values.

But if $f$ has a local minimum in $0$ (as seen above), we shall see in the next section that, if we perturb the points $w_1, \ldots , w_r$ a little bit, obtaining points $w'_1, \ldots , w'_r$, we can achieve that

\begin{itemize}
\item $f(x) := \sum_{i=1}^r \log |z-w'_i|^2$ is  a global Morse function,
\item $f$ has a local and not global minimum.
\end{itemize}
\end{rem}

\section{The configuration space}

We go back to the notation defined in the introduction, see  definition \ref{generating}.

\noindent
$\sG \sL (r,N)$ is the open set in the space  $(\RR^N)^r$of $r$ (distinct) points in $\RR^N$,
 such that  the function $f(x) = \sum_{k=1}^r \log |x-w_k|^2$ is a global Morse function, i.e. we have a
 generic big lemniscate configuration $\Ga(f)$.

 \begin{prop}
 The complement  $$ \mathcal Y:= (\RR^N)^r \setminus \sG \sL (r,N)$$  is a real semi-algebraic set
 different from $(\RR^N)^r $, in particular  the   open set $\sG \sL (r,N)$ is non empty.
 \end{prop}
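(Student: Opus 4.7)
The plan is to split the statement into two independent parts: that $\mathcal Y$ is semi-algebraic, and that $\sG\sL(r,N)$ is non-empty. For the first part I would write $\mathcal Y$ as a finite union of three pieces and apply Tarski--Seidenberg. The first piece, the diagonal $\{w_i=w_j\text{ for some }i\neq j\}$, is algebraic. The second piece is the set of $w$ admitting a degenerate critical point of $f$: by Lemma \ref{HesseMatrix}, the conditions $\nabla f(x)=0$ and $\det H_{f,x}=0$ become polynomial in $(x,w)\in\RR^N\times(\RR^N)^r$ after multiplying through by a suitable power of $\prod_k|x-w_k|^2$, and together with the open constraints $x\neq w_k$ they cut out a semi-algebraic set whose image under the projection to $(\RR^N)^r$ is semi-algebraic. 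The third piece, the set of $w$ admitting two distinct critical points of $f$ with equal critical values, is treated analogously on $\RR^N\times\RR^N\times(\RR^N)^r$, adding the open constraint $x\neq y$ and the polynomial identity $\prod_k|x-w_k|^2=\prod_k|y-w_k|^2$ (the exponentiated form of $f(x)=f(y)$).

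For non-emptiness I would exhibit one explicit Morse configuration by putting all $w_k$ on a single line: $w_k:=t_k e_1\in\RR^N$ with real $t_1<\cdots<t_r$. By Corollary \ref{orthodec}, every critical point of $f$ then lies on $L:=\RR e_1$ and its Hessian splits orthogonally as the Hessian of $f|_L$ plus a positive-definite form on $L^\perp$; hence $f$ is (global) Morse on $\RR^N$ iff $f|_L$ is (global) Morse on $L$. Writing $P(t):=\prod_k(t-t_k)$, the critical points of $f|_L$ are the $r-1$ zeros of $P'$, which by Rolle's theorem lie one per open interval $(t_k,t_{k+1})$ and are therefore simple; so $f|_L$ is automatically a Morse function for any choice of distinct real $t_k$. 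The distinctness of the $r-1$ critical values I would establish by induction on $r$, with trivial base $r=2$: fixing $t_1,\ldots,t_{r-1}$ and letting $t_r\to+\infty$, for $j\leq r-2$ the critical point $s_j$ converges to a critical point $\tilde s_j$ of the reduced $(r-1)$-configuration and $f|_L(s_j)$ picks up an extra summand $\approx 2\log t_r$, so the pairwise differences among these $r-2$ values converge to those of the reduced configuration (nonzero by induction); meanwhile the analysis of $\sum_k 1/(s_{r-1}-t_k)=0$ gives $s_{r-1}\sim (r-1)t_r/r$ and hence $f|_L(s_{r-1})\sim 2r\log t_r$, which diverges strictly faster. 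For $t_r$ large enough all $r-1$ values are pairwise distinct, yielding the required element of $\sG\sL(r,N)$.

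The only step that is not purely formal is this final asymptotic comparison of the critical values; the rest of the proof is a direct application of Tarski--Seidenberg, Corollary \ref{orthodec}, and Rolle's theorem.
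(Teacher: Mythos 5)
Your proof is correct, but it departs from the paper's argument in both halves, in ways worth recording. For semi-algebraicity the paper does not simply project the three loci you list: it introduces the critical variety $\sC\sR \subset \RR^N \times (\RR^N)^r$, proves it is smooth of codimension $N$ away from the locus $x = w_i$, identifies the nondegenerate critical points of $F_w$ with the points of $\sC\sR$ where the projection $\pi : \sC\sR \to (\RR^N)^r$ is a submersion, and then combines Tarski--Seidenberg with Sard's theorem to conclude that the image of the non-submersion locus is semi-algebraic of dimension strictly less than $Nr$. That machinery buys a dimension bound (of the kind that reappears as a hypothesis in the later density discussion), which your direct description of $\mathcal Y$ as a finite union of projections of explicitly semi-algebraic sets does not provide; on the other hand your version is shorter and entirely sufficient for the proposition as stated, which asks only for semi-algebraicity and non-emptiness, and your reduction of the conditions $\nabla f(x)=0$, $\det H_{f,x}=0$ and $f(x)=f(y)$ to polynomial conditions by clearing denominators is sound. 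For non-emptiness the paper places the points in a plane $\RR^2 \subset \RR^N$, applies Corollary \ref{orthodec}, and then cites \cite{catanesepaluszny} for the existence of global Morse polynomial lemniscates in $\CC$; you instead place the points on a line and prove everything from scratch: Rolle's theorem gives the local Morse property for any distinct real $t_k$ (since $P'$ has only simple roots and $(\log|P|)''=P''/P$ at a zero of $P'$), and your induction on $r$, sending $t_r\to+\infty$ and comparing the rates $2\log t_r$ (for the $r-2$ persistent critical values) against $2r\log t_r$ (for the escaping critical point, via $s_{r-1}\sim\frac{r-1}{r}t_r$), correctly forces the critical values to be pairwise distinct for $t_r$ large. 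So your construction is self-contained where the paper's relies on an external result; both routes are valid.
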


 \begin{proof}

 It is sufficient to consider the conditions that say that $w:= (w_1, \dots, w_r) \in \sG \sL (r,N)$.
 The condition that the points $w_j$ are pairwise distinct amount to the fact that $ w \notin \De_{i,j} : = \{ w | w_i = w_j\}$.

 \noindent
 Observe that $\De_{i,j} $ is a linear subspace of  codimension $N$.

 The condition that $F$ is a (local) Morse function is the condition that all critical points are nondegenerate.
  To this purpose, we consider as customary  the critical variety:
 $$ \sC \sR  : = \{ (x,w) |  x \in \RR^N , w \in  (\RR^N)^r \setminus \cup_{i<j} \De_{i,j} ,\frac{ \partial{F}}{\partial{x_j}}  (x,w)=0 \ \forall j=1, \dots, N. \}$$

 \noindent
 Here $F(x,w)$ is the real polynomial $\prod_1^r |x-w_j|^2$.

 Clearly $ \sC \sR $ is defined by $N$ polynomial equations, so it is a real  algebraic set, and its projection to
 $(\RR^N)^r \setminus  \De : = (\RR^N)^r \setminus \cup_{i<j} \De_{i,j}$ is proper by lemma \ref{gauss}.
 Now, the equation  $\frac{ \partial{F}}{\partial{x_j}}  (x,w)=0 \ \forall j=1, \dots, n$ is equivalent to the equation
$ \frac{ \partial{f}}{\partial{x_j}}  (x,w)=0 \  \forall j=1, \dots, n$ if $ x \notin \Sigma = \{ w_1, \dots, w_r\}$,
hence to the vanishing of the gradient $\grad_x f(x,w)$ with respect to the variable $x$ of the function $f(x,w)$.

\noindent
We change now variables setting $ -u_j : = (x-w_j)$.

Given, the function $\grad_x f(x,w)$, the derivative with respect to the variable $w_i$ of this vector valued function,
$$ \frac{\partial }{\partial{w_i}} [\grad_x f(x,w)]=   \frac{\partial }{\partial{w_i}} [ \frac{x-w_i}{|x-w_i|^2} ]$$
 equals to the derivative with respect to the variable $u_i$ of the function $\frac{u_i}{|u_i|^2}$.

But the derivative of the vector valued function $\frac{u}{|u|^2}$ is given by the matrix of the quadratic form (on tangent vectors $v$)
$\frac{1}{|u|^4} [ (u,u) (v,v) - 2 (u,v)(u,v)]$.   We restrict to the open set  $u \neq 0$, and without loss of generality,
by homogeneity, we can assume $|u|=1$ and indeed, after a change of orthonormal basis, that $u= e_1$.
Then the quadratic form becomes
$$
|v|^2 - 2(e_1,v)^2 = -v_1^2 +v_2^2 + \ldots + v_N^2,
$$
which is non degenrate.

We have therefore established that $ \sC \sR$ is smooth of codimension $N$ outside of the locus where $ x=w_i$.
However, the points $x=w_i$ are isolated critical points of $F$.

Hence the locus of  $ \sC \sR$ where the projection $\pi :  \sC \sR \ra (\RR^N)^r $ is not a submersion (being a submersion means
that the derivative is surjective) is a closed algebraic set and its image in $(\RR^N)^r $, by the Tarski-Seidenberg theorem
 (cf. \cite{Jacobson}, page 323 for an elementary proof), and by Sard's theorem, is a semialgebraic set of dimension strictly smaller than $Nr$.

Now, the key well known  fact is that the isolated critical points of the function $F$ are exactly the points of $ \sC \sR$ where
  the projection $\pi :  \sC \sR \ra (\RR^N)^r $ is  a submersion.

  The final condition  that $f$ be a global Morse function runs as follows: we have a non empty open set
  for which $F$, hence $f$, is  a local Morse function; this set is the
  complement of the above semialgebraic set, that we denote by   $ \sL (r,N)$ (observe that for an r-tuple of points in $ \sL (r,N)$ the singular level sets may  contain  more than one singular point).

  Over the open set $ \sL (r,N)$  the critical points are a finite set,
  and the condition that $f_w$ ($f_w(x) = f (x,w)$)  is  a global Morse function is that the values of $F_w$ on the critical points which are not in $\Sigma$
  are pairwise distinct.
  We are thus removing  another closed semialgebraic set.

  That $\sG \sL (r,N)$ is non empty follows  by considering points $w_1, \dots, w_r$ which lie in $\RR^2$.
  Using a complex structure where  $\RR^2 = \CC$, we reduce using corollary \ref{orthodec}  to the case
  of polynomial lemniscates on $\CC$, dealt with in \cite{catanesepaluszny}.

  \end{proof}

\begin{prop}\label{minimastability}
 Assume that $\sG \sL (r,N)$ is everywhere dense, equivalently, its  complementary set is a semialgebraic set of real dimension  $< Nr$.

Then, given  an r-tuple $w \in (\RR^N)^r$ of points $ w_1, \ldots , w_r \in \RR^N$ (yielding a set of points $\Sigma_w := \{ w_1, \ldots , w_r \}  \subset \RR^N$) such that
\begin{itemize}
\item $f_w(x) := \sum_{i=1}^r \log |z-w_i|^2$ is a local Morse function,
\item $f_w$ has  $h$ local minima in $\RR^N \setminus \Sigma_w$,
\end{itemize}
then for each $\de > 0$,  there is another r-tuple of points $w'_1, \dots, w'_r$, with $|w'_i - w_i| < \de$
such that
\begin{itemize}
\item $f_{w'}(x) := \sum_{i=1}^r \log |z-w'_i|^2$ is a global Morse function,
\item $f_{w'}$ has  $h$ local minima in $\RR^N \setminus \Sigma_{w'}$,
\end{itemize}
\end{prop}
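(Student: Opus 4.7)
The plan is to show that the condition \emph{``$f_w$ is a local Morse function with exactly $h$ local minima in $\RR^N \setminus \Sigma_w$''} cuts out an \emph{open} subset $\mathcal U_h \subset (\RR^N)^r$, then intersect this open set with the dense set $\sG\sL(r,N)$ guaranteed by hypothesis.

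For the openness, the main step is a persistence-of-critical-points argument via the implicit function theorem applied to the equation $\grad_x f(x,w) = 0$. At each of the finitely many critical points $p_1,\ldots,p_k$ of $f_w$, the Hessian is nondegenerate by hypothesis, so the implicit function theorem gives a smooth local continuation $w' \mapsto p_i(w')$ defined on some neighborhood of $w$ and producing a critical point of $f_{w'}$ of the same index (since the Hessian depends continuously on $w'$ and nondegeneracy is an open condition).

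To rule out the appearance of additional critical points under small perturbation, I use lemma \ref{gauss}: the critical set of $f_{w'}$ is contained in $\Conv(\Sigma_{w'})$, which remains inside a fixed compact set $K \subset \RR^N$ for $w'$ in a small neighborhood of $w$. If some sequence $w^{(n)} \to w$ admitted critical points $q^{(n)} \in K$ distinct from the continuations $p_i(w^{(n)})$, then by compactness of $K$ a subsequence converges to some $q \in K$, which by continuity of $\grad_x f$ is a critical point of $f_w$, hence $q = p_i$ for some $i$; but then by the local uniqueness part of the implicit function theorem applied near $p_i$, we have $q^{(n)} = p_i(w^{(n)})$ for large $n$, a contradiction. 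Hence in a neighborhood of $w$, the function $f_{w'}$ has exactly the same number of critical points of each index as $f_w$, and in particular exactly $h$ local minima in $\RR^N \setminus \Sigma_{w'}$, so $\mathcal U_h$ is open. I expect this compactness/uniqueness step to be the main (but routine) obstacle.

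Finally, shrinking the neighborhood if necessary so that it sits inside the open ball of radius $\delta$ around $w$ in $(\RR^N)^r$, I invoke the hypothesis that $\sG\sL(r,N)$ is dense: this neighborhood meets $\sG\sL(r,N)$, and any point $w'$ in the intersection satisfies both conditions simultaneously — $f_{w'}$ is a global Morse function, and it has exactly $h$ local minima in $\RR^N \setminus \Sigma_{w'}$ — while $|w'_i - w_i| < \delta$ for each $i$.
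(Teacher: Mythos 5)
Your proposal is correct, and it takes a somewhat different route from the paper. The paper first picks $w'\in\sG\sL(r,N)$ with $|w'_i-w_i|<\de$ (possible by density), and then argues persistence of each non-global local minimum $y$ of $f_w$ directly: since $f_{w'}\to f_w$ uniformly on a compact set containing the convex hulls, and since $f_w(x)>f_w(y)+2\epsilon(y)$ on $\partial B(y,r/2)$, the function $f_{w'}$ must attain a local minimum inside $\overline{B(y,r/2)}$. You instead prove that the locus $\mathcal U_h$ of $r$-tuples for which $f_w$ is a local Morse function with exactly $h$ local minima is open (via the implicit function theorem for persistence and index-preservation, plus lemma \ref{gauss} and a compactness/local-uniqueness argument to exclude newborn critical points), and then intersect with the dense set $\sG\sL(r,N)$. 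Your version is in one respect more complete: the paper's written argument only yields \emph{at least} $h$ local minima for $f_{w'}$, whereas your exclusion of extra critical points gives \emph{exactly} $h$, which is what the statement literally asserts. One small point you should add: in your compactness step the limit $q$ of the putative extra critical points $q^{(n)}$ could a priori lie in $\Sigma_w$, where $f_w$ is undefined; this is ruled out because if $q^{(n)}\to w_i$ then the single term $2(q^{(n)}-w_i^{(n)})/|q^{(n)}-w_i^{(n)}|^2$ in $\grad f_{w^{(n)}}(q^{(n)})$ blows up while the remaining terms stay bounded, so the gradient cannot vanish. With that one-line fix your argument is complete.
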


\begin{proof}
The second assertion follows  from the first, since we can find an r-tuple $w'$ very close to $w$ and lying in $\sG \sL (r,N)$.

Then, for $\de$ sufficiently small, the difference $|f_w - f_{w'}|$ is smaller, on any given compact $K$ containing the convex hull of the set $\Sigma_w$, than any given $\epsilon > 0$,
provided $|w'_i - w_i| < \delta$.

Let now $y$ be a local minimum for $f_w$ which is not a global minimum. Then there is a constant $r$ such that the closed ball $\overline{B(y,r)}$
contains no other critical points, and, for $x \in \partial B(y,r/2)$, $ f_w(x) > f_w(y) + 2 \epsilon(y) $, where $ \epsilon(y) > 0$ is a constant.

Set $  \epsilon : = min_y   \epsilon(y)$ and choose $\de$ as above: then $f_{w'}$ still possesses a local minimum inside $\overline{B(y,r/2)}$.

\end{proof}

\begin{prop}
$\sG \sL (r,N)$ is everywhere dense.
\end{prop}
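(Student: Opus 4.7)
The plan is to combine the preceding proposition, which shows that the Morse locus $\sL(r,N)$ has semi-algebraic complement of real dimension strictly less than $Nr$ and is therefore open and everywhere dense in $(\RR^N)^r$, with an infinitesimal transversality argument showing that, inside $\sL(r,N)$, the condition that the critical values of $f_w$ outside $\Sigma_w$ be pairwise distinct is also dense.

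First I would note that on $\sL(r,N)$ the critical points of $f_w$ lying outside $\Sigma_w$ are nondegenerate and, by lemma \ref{gauss}, confined to the convex hull of $\{w_1,\dots,w_r\}$; hence there are only finitely many of them, their number is locally constant on $\sL(r,N)$, and by the implicit function theorem applied to $\nabla_x f(x,w)=0$ they can be written as real-analytic functions $y_1(w),\dots,y_m(w)$ on each connected piece of $\sL(r,N)$. Consequently the critical values $\phi_k(w):=f_w(y_k(w))$ are real-analytic there, and the task reduces to showing that for every pair $i\neq j$ the real-analytic function $\phi_i-\phi_j$ does not vanish identically.

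The key computation is a perturbation of one point only: replace $w_1$ by $w_1+\varepsilon v$, keeping the other $w_\ell$ fixed. Because $\nabla_x f(y_k(w),w)=0$ at a critical point, the chain rule kills the contribution of $dy_k/d\varepsilon$ and yields
\[
\frac{d}{d\varepsilon}\Big|_{\varepsilon=0}\phi_k \;=\; -\,\frac{2\,(y_k-w_1)\cdot v}{|y_k-w_1|^2}.
\]
The assignment $y\mapsto (y-w_1)/|y-w_1|^2$ is the classical inversion centred at $w_1$, in particular injective on $\RR^N\setminus\{w_1\}$. Therefore, whenever $y_i\neq y_j$, the two vectors $(y_i-w_1)/|y_i-w_1|^2$ and $(y_j-w_1)/|y_j-w_1|^2$ are distinct, so one can pick $v$ making the first variation of $\phi_i-\phi_j$ nonzero. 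Hence $\phi_i-\phi_j$ is a nonzero real-analytic function and its zero locus is of positive codimension.

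Summing over the finitely many pairs $(i,j)$ and gluing the local analytic pieces together, the subset of $\sL(r,N)$ on which some two critical values coincide is a semi-algebraic set of real dimension strictly smaller than $Nr$, by the Tarski--Seidenberg theorem applied to the natural semi-algebraic description (``there exist two distinct critical points of $F_w$ outside $\Sigma_w$ with the same value of $F_w$''). Combined with the density of $\sL(r,N)$ already established in the previous proposition, this gives the density of $\sG\sL(r,N)$ in $(\RR^N)^r$. The only delicate point I anticipate is passing from the local analytic non-vanishing of each $\phi_i-\phi_j$ to a global semi-algebraic dimension bound on the union of the bad loci; this is exactly what Tarski--Seidenberg, combined with the infinitesimal computation above, is designed to accomplish.
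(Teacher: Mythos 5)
Your argument is correct, and it takes a genuinely different route from the paper's. The paper proceeds by contradiction with a wall-crossing scheme: it assumes that a connected component $U$ of $\sL(r,N)$ misses $\sG \sL(r,N)$ entirely, takes an analytic arc transversal to a common boundary wall with a component of $\sG \sL(r,N)$, and analyses how the branches of critical points and their critical values continue analytically across the wall, using the $A_2$ normal form of Lemma \ref{gen1} and the codimension-two statement of Lemma \ref{gen2} --- two lemmas whose proofs are deferred to another paper. Your proof is instead a direct first-variation (envelope-theorem) computation: since $\nabla_x f(y_k(w),w)=0$, the derivative of the critical value $\phi_k$ under $w_1 \mapsto w_1+\varepsilon v$ is $-2\langle (y_k-w_1)/|y_k-w_1|^2,\, v\rangle$, and the injectivity of the inversion centred at $w_1$ shows that $\nabla_w(\phi_i-\phi_j)\neq 0$ whenever $y_i \neq y_j$ (which always holds for $i\neq j$, since the $y_k$ enumerate distinct nondegenerate critical points). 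This is in fact stronger than the non-identical-vanishing you aim for: it exhibits the coincidence locus $\{\phi_i=\phi_j\}$ locally as a $C^1$ hypersurface, so the ``delicate point'' you flag at the end is not delicate --- an open dense set minus a closed nowhere dense subset is dense, and Tarski--Seidenberg is needed only if one also wants the semialgebraic dimension bound $<Nr$ on the complement, which is the hypothesis of Proposition \ref{minimastability} and which your argument delivers as well. What the paper's longer route buys is structural information about the two kinds of walls (quantitative and qualitative) relevant to the enumeration program of Definition \ref{generating}; what yours buys is a self-contained, elementary proof with an explicit transversality certificate that does not rely on the deferred Lemmas \ref{gen1} and \ref{gen2}.
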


\begin{proof}
Assume the contrary: then there is a connected component $U$ of $ \sL (r,N)$ which has empty intersection with $\sG \sL (r,N)$
and such that it has a common boundary $M$ with a connected component $U'$ of $\sG \sL (r,N)$.

Take a general point $w$ of the common boundary $M$, and take an analytic arc $I$ transversal to $M$ at $w$;  apply the following two lemmas \ref{gen1}
and \ref{gen2}.  Then the inverse image $J$ of $I$ inside the critical variety $ \sC \sR$ consists of several arcs $J_h$ which map homeomorphically
to $I$, plus there is (possibly) another arc $J'$ which maps with degree 2 to the part of $I$ lying in one of the two domains $U$, respectively $U'$.

Set $I_U : = I \cap U$, and similarly $I_{U'} : = I \cap U'$. By the hypothesis, there are two arcs $A,B$ of $I_U$ on which the critical values are the same; by analytic continuation,
these arcs $A,B$ cannot  both respectively be part of two arcs of the form $J_h$. Hence one, say $A$,  of them lies in the arc $J'$. By lemma \ref{gen2} $B$ cannot lie in an arc
of the form $J_h$, otherwise, by analytic continuation, the critical values on $J_h \cap I_{U'} $ would be imaginary.
Finally, the possibility that both arcs $A,B$  lie in $J'$ contradicts lemma \ref{gen2} .

\end{proof}

\begin{lemma}\label{gen1}
Assume that we have a degenerate critical point of the function $f = f_w$,
where there are local coordinates  $(u_1, \dots, u_N)$ such that
$$ f (u) = u_1^2 + \dots + u_{N-1}^2 + c + \eta u_N^3,$$ where $ \eta = \pm 1$.

Then for   $w'$ in a neighbourhood of $w$ we have a deformation of $f$ of the form
$$ f_{w'} (u) = u_1^2 + \dots + u_{N-1}^2 + c' + \eta u_N^3 + t(w') u_N,$$
for $t(w')$ an appropriate analytic function of $w'$.

The critical point $u=0$ deforms to two real nondegenerate critical points
$$ u_1 = u_2= \dots =  u_{N-1} = 0, \sqrt{3} u_N = \sqrt{- \eta t(w')}$$
for the points $w'$ where the function $\eta t(w')$ is negative,
and to two  imaginary critical points for the points $w'$ where the function $\eta t(w')$ is positive.

In particular, the critical values of  the function $f_{w'} $ at the two real critical points are distinct as soon as the points are distinct (i.e., for $t(w') \neq 0$); while the critical values at the imaginary critical points are non real.

\end{lemma}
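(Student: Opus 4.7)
The plan is to bring $f_{w'}$ into the claimed normal form by an analytic change of coordinates depending analytically on $w'$, and then to compute critical points and critical values directly from that normal form.

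First, since the Hessian of $f=f_w$ at $u=0$ is nondegenerate in the directions $u_1,\ldots,u_{N-1}$, the parametric Morse (splitting) lemma supplies analytic functions $\tilde u_i(u, w')$ with $\tilde u_i(u,w)=u_i$ such that
$$
f_{w'}(u) = \tilde u_1^2 + \cdots + \tilde u_{N-1}^2 + g(u_N, w'),
$$
where $g$ is real-analytic and $g(u_N, w) = c + \eta u_N^3$. Expanding $g(u_N, w') = \sum_{k\ge 0} g_k(w') u_N^k$, the coefficient $g_3(w')$ is continuous with $g_3(w)=\eta$, hence invertible for $w'$ near $w$. An analytic Tschirnhaus-type shift $u_N \mapsto u_N - g_2(w')/(3 g_3(w'))$ removes the $u_N^2$ term, an analytic rescaling of $u_N$ restores the cubic coefficient to $\eta$, and a final analytic change of $u_N$, obtained from the parametric implicit function theorem applied to the $A_2$ miniversal unfolding $\eta u_N^3 + t u_N + s$ (with the constant $s$ absorbed into $c'(w')$), kills all terms of order $\ge 4$ in $u_N$. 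The outcome is
$$
f_{w'}(u) = u_1^2 + \cdots + u_{N-1}^2 + c'(w') + \eta u_N^3 + t(w') u_N,
$$
with $c'$ and $t$ real-analytic in $w'$ and $t(w)=0$.

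With this normal form the critical equations read $u_1=\cdots=u_{N-1}=0$ and $3\eta u_N^2 + t(w')=0$, so $u_N^2 = -\eta t(w')/3$, i.e.\ $\sqrt{3}\,u_N = \pm\sqrt{-\eta t(w')}$. This gives two distinct real solutions when $\eta t(w')<0$ and a conjugate pair of purely imaginary solutions when $\eta t(w')>0$. Substituting back, at any such critical point
$$
f_{w'} = c'(w') + u_N\bigl(\eta u_N^2 + t(w')\bigr) = c'(w') + \tfrac{2}{3}\,t(w')\,u_N,
$$
so in the real case the two critical values differ by $\tfrac{4}{3}\,t(w')\sqrt{-\eta t(w')/3}$, which is nonzero whenever $t(w')\neq 0$, and in the imaginary case they equal $c'(w') \pm \tfrac{2i}{3}\,t(w')\sqrt{\eta t(w')/3}$, manifestly non-real.

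The main delicate point is the parametric reduction to normal form: each coordinate change must depend analytically on the full parameter $w'\in (\RR^N)^r$, not merely on a one-parameter transverse slice through $w$. This is the real-analytic parametric version of the universal unfolding theorem for the $A_2$ singularity; in our setting it is obtained step by step from the parametric implicit function theorem applied to the Taylor coefficients $g_k(w')$, and the analytic function $t(w')$ emerges explicitly from that construction.
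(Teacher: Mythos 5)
There is nothing in the paper to compare against here: the authors state the lemma and then write ``We shall provide the proof of the above lemmas in another paper,'' so Lemma \ref{gen1} is left unproved in this text. Judged on its own, your argument is sound and is the natural singularity-theoretic route one would expect the authors to have in mind. The second half --- solving $3\eta u_N^2+t(w')=0$, rewriting the critical value as $c'(w')+\tfrac{2}{3}t(w')u_N$, and reading off the reality/distinctness of the two critical values --- is complete and correct, and reproduces exactly the formula $\sqrt{3}\,u_N=\pm\sqrt{-\eta t(w')}$ in the statement.

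The weight of the proof sits entirely in the first half, and there you are invoking rather than proving two nontrivial pieces of machinery: the splitting (Gromoll--Meyer) lemma \emph{with parameters} in the real-analytic category, and the versality of the $A_2$ unfolding $\eta u_N^3+tu_N+s$ with the parameter space being all of a neighbourhood of $w$ in $(\RR^N)^r$. Both are standard and both do hold in the real-analytic setting, so this is legitimate, but you should cite them explicitly (e.g.\ Arnold--Gusein-Zade--Varchenko); you correctly flag this as the delicate point. Two small inaccuracies in the intermediate bookkeeping: the shift $u_N\mapsto u_N-g_2/(3g_3)$ kills the quadratic term exactly only when $g$ is a cubic polynomial --- for the full analytic series the correct shift is produced by the implicit function theorem, not by that closed formula --- and ``killing all terms of order $\ge 4$ by a change of $u_N$'' reintroduces lower-order terms unless it is packaged, as versality does, together with the reparametrization $w'\mapsto(t(w'),s(w'))$. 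Since your final step appeals to the versal unfolding theorem anyway, these are presentational blemishes rather than gaps; the earlier ad hoc normalizations could simply be deleted. One further point worth making explicit: the normal form shows that the \emph{only} critical points of $f_{w'}$ in the coordinate neighbourhood are the two you exhibit, which is implicitly needed for the intended application to wall-crossing.
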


\begin{lemma}\label{gen2}
The locus of r-tuples $w$ such that $f_w$ has at least two degenerate singular points (counted with multiplicity) has codimension at least $2$.
\end{lemma}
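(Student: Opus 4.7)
The plan is to work inside the universal critical variety $\sC \sR$ constructed in the preceding proposition, which is smooth of dimension $Nr$ away from the trivial locus $x = w_i$, with $\pi : \sC \sR \to (\RR^N)^r$ proper (by Lemma~\ref{gauss}) and generically finite. Using the explicit Hessian from Lemma~\ref{HesseMatrix} I would define the \emph{degenerate locus}
$$
D' := \{(x,w) \in \sC \sR : \det H_{f_w, x} = 0\},
$$
cut out by a single polynomial equation, hence a hypersurface of dimension $\leq Nr - 1$. Writing $\mathcal{Y}_2 \subset (\RR^N)^r$ for the locus we wish to bound, one then has
$$
\mathcal{Y}_2 \ \subset \ \pi\bigl((D' \times_{(\RR^N)^r} D') \setminus \De\bigr) \ \cup \ \pi(D''),
$$
where $D'' \subset D'$ consists of points where either $\mathrm{rank}\, H_{f_w, x} \leq N-2$, or, in the normal form of Lemma~\ref{localform}, the cubic coefficient $F'''(0)$ also vanishes (i.e., $A_{\geq 3}$ singularities). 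Two degenerate critical points counted with multiplicity means exactly landing in one of these two loci.

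The first piece is treated by a direct dimension count: since $\dim D' \leq Nr - 1$ and $\pi|_{D'}$ is generically finite, the off-diagonal fibre product has dimension at most $2(Nr-1) - Nr = Nr - 2$, and by Tarski--Seidenberg its image is semialgebraic of codimension $\geq 2$ in $(\RR^N)^r$. For the second piece, the rank $\leq N-2$ condition cuts out codimension $3$ in $\mathrm{Sym}^2(\RR^N)$, while the $A_{\geq 3}$ condition adds one further analytic equation on the smooth part of $D'$; in either case (under transversality, see below) this yields codimension $\geq 2$ in $\sC \sR$, hence codimension $\geq 2$ for $\pi(D'')$.

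The main technical obstacle is verifying the underlying transversality: that the Hessian map $\mathcal{H} : \sC \sR \to \mathrm{Sym}^2(\RR^N)^*$ meets each rank stratum of $\mathrm{Sym}^2(\RR^N)$ transversally at generic points. This amounts to repeating the computation at the end of the previous proof: varying a single $w_i$, the matrix $\partial H/\partial w_i$ is (up to scalar) the differential of $u \mapsto u/|u|^2$ at $u = x - w_i$, itself a non-degenerate symmetric form; the independence of the $r$ perturbations gives enough freedom to span a complement to the tangent space of any fixed rank stratum at a generic point. Combining the two pieces, $\mathcal{Y}_2$ is a semialgebraic set of codimension $\geq 2$ in $(\RR^N)^r$, proving the lemma.
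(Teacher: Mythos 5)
First, a point of comparison with the paper: the paper does \emph{not} contain a proof of Lemma \ref{gen2}. Immediately after stating Lemmas \ref{gen1} and \ref{gen2} the authors write that the proofs will be provided in another paper. So there is no argument of theirs to measure yours against, and your proposal has to stand on its own; unfortunately, as written it has a genuine gap at its central step.

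The gap is the dimension count for the main piece $\pi\bigl((D' \times_{(\RR^N)^r} D')\setminus \De\bigr)$. The formula $2(Nr-1)-Nr = Nr-2$ is the \emph{expected} dimension of a fibre product, not an upper bound. Since each $f_w$ has only finitely many critical points (the critical set is compact by Lemma \ref{gauss} and its points are isolated by Theorem \ref{MorseCairns}), the first projection $D'\times_{\pi} D' \to D'$ has finite fibres, so all that follows unconditionally is $\dim (D'\times_{\pi} D') = \dim D' \le Nr-1$; the image in $(\RR^N)^r$ could therefore a priori be an entire codimension-one component of $\pi(D')$ (for instance, if every configuration with one degenerate critical point automatically had a second one, the locus in question would have codimension exactly $1$). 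Your bound would be justified only if one already knew that, near a generic point of the locus to be estimated, the two conditions ``$x_1(w)$ is a degenerate critical point'' and ``$x_2(w)$ is a degenerate critical point'' define hypersurfaces in $w$-space with no common component --- but that independence is precisely the substance of the lemma, so the argument is circular exactly where the work has to be done. What is missing is a \emph{two-point} transversality statement: at a generic $w_0$ with distinct degenerate critical points $x_1 \ne x_2$, one must produce a perturbation $\dot w$ (e.g.\ of a single $w_i$) whose effect on $\det H_{f_w, x(w)}$ is nonzero at $x_1$ while vanishing, or being independent, at $x_2$. Your transversality paragraph only analyses the Hessian map at a single critical point; that controls the codimension of $D'$ and of the corank-two/$A_{\ge 3}$ locus $D''$, but says nothing about the joint condition at two critical points. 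A secondary, smaller issue: to get $\dim D' \le Nr-1$ at all you must check that $\det H_{f_w,x}$ is not identically zero on any irreducible component of $\sC\sR$; this is plausible (on the component through planar configurations it follows from the one-variable theory of \cite{catanesepaluszny}), but it is not automatic and should be verified componentwise.
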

We shall provide the proof of the above lemmas in another paper.

\begin{rem}
In the next section we shall give explicit examples, first  of the situation in Proposition \ref{sym}, then of cases where one has many local (non global) minima,
 and we show that in these examples $f$ is in fact a local Morse function (but never a global Morse function).
\end{rem}

\section{Elementary examples}
In this section we give some explicit examples of points $\{ w_1, \ldots , w_r \}  \subset \RR^N$, such that $f(x) := \sum_{i=1}^r \log |z-w_i|^2$ has one or more local minima in $\RR^N \setminus \Sigma$.

We omit most of the  elementary calculations, which can be found in the arXiv version of the paper, \cite{3Darxiv}.

\subsection{The hypercube in $\RR^N$}
Let $N \geq 3$ be a natural number and consider the midpoints of the big faces of the hypercube,
i.e., the points $P_1, P_2, \ldots , P_{2N} \in \RR^{N}$, with
$$
P_i:= e_i, \ P_{N+i} := -e_i, \ 1 \leq i \leq N.
$$
\noindent
Here $e_i$ is the i-th standard basis vector of $\RR^N$. Let
$$
F(x):= \prod_{i=1}^N |x-e_i|^2|x+e_i|^2.
$$

\begin{prop}
Then $F$ has $2N$ absolute minima in the points $x = \pm e_i$, $1=1, \ldots, n$, a local minimum in $x=0$ and $2N$ non degenerate critical points of negativity 1 in $x = \pm \sqrt{\frac{N-2}{N}} e_i$.
\end{prop}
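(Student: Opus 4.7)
The plan is to locate all critical points of $f(x)=\log F(x)$ explicitly and then invoke Theorem~\ref{index} together with Proposition~\ref{sym} to determine their Morse types. Throughout I exploit the natural action of the hyperoctahedral group $G=(\ZZ/2)^{N}\rtimes S_{N}$ of signed permutations, which preserves $\Sigma=\{\pm e_{1},\dots,\pm e_{N}\}$ and acts irreducibly on $\RR^{N}$. The easy cases are immediate: the $\pm e_{i}$ are the zeros of $F$, hence its $2N$ absolute minima; and for $N\geq 3$ the set $\Sigma$ is not contained in any affine plane (it already contains $\pm e_{1},\pm e_{2},\pm e_{3}$), so Proposition~\ref{sym} yields that $0$ is a local minimum of $f$, with Hessian $H_{f,0}$ necessarily non-degenerate since its kernel would be a $G$-invariant subspace.

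The heart of the proof is to solve $\nabla f(x)=0$ off $\Sigma$. Setting $R:=|x|^{2}+1$ and using $|x\mp e_{i}|^{2}=R\mp 2x_{i}$, a direct computation shows that, up to a nonzero universal constant, the $j$-th component of $\nabla f$ equals
$$
x_{j}\left[R\sum_{i=1}^{N}\frac{1}{R^{2}-4x_{i}^{2}}\;-\;\frac{2}{R^{2}-4x_{j}^{2}}\right].
$$
Thus for every $j$ either $x_{j}=0$, or $R^{2}-4x_{j}^{2}$ takes a value independent of $j$; consequently the nonzero coordinates share a common square $c>0$. If exactly $k$ coordinates are nonzero then $R=kc+1$, and after clearing denominators and dividing by $R$ the remaining condition reduces to the quadratic
$$
Nk^{2}c^{2}+2\bigl(k(N+1)-2N\bigr)c+(N-2)=0.
$$
For $k\geq 2$ and $N\geq 3$ all three coefficients are strictly positive (the middle one equals $2(N(k-2)+k)\geq 2k$), so no positive root exists. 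For $k=1$ the equation becomes $Nc^{2}-2(N-1)c+(N-2)=0$, with roots $c=1$ and $c=(N-2)/N$; the root $c=1$ is spurious since it corresponds to points of $\Sigma$. Together with the origin ($k=0$), this exhausts the critical set and gives precisely the $2N$ axial critical points $\pm\sqrt{(N-2)/N}\,e_{j}$.

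To classify an axial critical point $p=\sqrt{(N-2)/N}\,e_{j}$, note that the sign-flip involutions on the coordinates other than the $j$-th kill every off-diagonal Hessian entry, and the permutations of these coordinates equate all the non-$j$ diagonal entries, so $H_{f,p}$ is diagonal in the standard basis with an eigenvalue $\lambda_{\parallel}$ in the $e_{j}$-direction and a common eigenvalue $\lambda_{\perp}$ of multiplicity $N-1$ transversally. The restriction of $f$ to the $j$-th axis equals $\phi(t)=2\log|t^{2}-1|+2(N-1)\log(t^{2}+1)$, and a brief calculation gives $\phi''\bigl(\sqrt{(N-2)/N}\bigr)=-2N^{2}(N-2)/(N-1)<0$, so $\lambda_{\parallel}<0$. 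Theorem~\ref{index} caps the negativity at $1$, forcing $\lambda_{\perp}>0$; the critical point is thus non-degenerate of negativity $1$, and $G$-equivariance transports this to all $2N$ axial critical points. The main obstacle is the case analysis of the quadratic in $c$, but positivity of its coefficients for $k\geq 2$ makes it immediate, so everything reduces to tidy bookkeeping around the already-established results and the symmetry of the configuration.
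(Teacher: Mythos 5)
Your proof is correct, and it is both more complete and organized differently from the paper's own treatment. The paper establishes the local minimum at $0$ by expanding $F$ directly, $F(x)=1+2(N-2)|x|^2+f_{\geq 3}(x)$, then simply asserts the location of the saddles and records the diagonal Hessian entries at $\lambda e_1$ (both $(H_{f,\lambda e_1})_{11}$ and $(H_{f,\lambda e_1})_{ii}$, $i>1$, are computed explicitly), deferring the remaining elementary calculations to the arXiv version; in particular the displayed text never verifies that $\{0\}\cup\{\pm\lambda e_i\}$ exhausts the critical set. You supply exactly that missing piece: writing $|x\mp e_i|^2=R\mp 2x_i$ with $R=|x|^2+1$, reducing $\nabla f=0$ to the condition that all nonzero coordinates share a common square $c$, and disposing of the case $k\geq 2$ by positivity of all three coefficients of the quadratic $Nk^2c^2+2\bigl(k(N+1)-2N\bigr)c+(N-2)$ — I checked the algebra and it is right, as is $\phi''\bigl(\sqrt{(N-2)/N}\bigr)=-2N^2(N-2)/(N-1)$, matching the paper's entry. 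Your other economies are also sound: invoking Proposition \ref{sym} for the minimum at $0$ (the hyperoctahedral group does act irreducibly and $\Sigma$ is a single orbit not contained in a plane for $N\geq 3$), and using Theorem \ref{index} to conclude $\lambda_{\perp}>0$ from $\lambda_{\parallel}<0$ without computing the transverse eigenvalue. What the paper's route buys is explicit numerics for both eigenvalues (useful for the asymptotic remark $H_{f,\lambda e_1}\sim\mathrm{diag}(-2N^2,2N^2,\dots,2N^2)$); what yours buys is an actual proof that there are no further critical points and less computation at the saddles. The only cosmetic caveat is that your argument at the axial points, relying on Theorem \ref{index}, requires the non-coplanarity hypothesis, which you should note holds since $\pm e_1,\pm e_2,\pm e_3\in\Sigma$ affinely span a $3$-dimensional subspace for $N\geq 3$ — which you do implicitly when invoking Proposition \ref{sym}.
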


In fact,
\begin{multline}
F(x) = F(x_1, \ldots, x_N) = ((x_1-1)^2 + x_2^2 + \ldots + x_N^2) ((x_1+1)^2 + x_2^2 + \ldots + x_N^2) \cdot \\
\cdot (x_1^2 + (x_2-1)^2 + \ldots + x_N^2) (x_1^2 + (x_2+1)^2 + \ldots + x_N^2) \cdot \ldots \cdot \\
\cdot(x_1^2 + x_2^2 + \ldots + (x_N-1)^2) (x_1^2 + x_2^2 + \ldots +(x_N-1)^2) = \\
= (x_1^2 -2x_1+1 +x_2^2 + \ldots + x_N^2) (x_1^2+2x_1+1 + x_2^2 + \ldots + x_N^2) \cdot \ldots\\
\cdot \ldots \cdot (x_1^2 + x_2^2 + \ldots + x_N^2-2x_N +1) (x_1^2 + x_2^2 + \ldots +x_N^2+2x_N +1)= \\
= 1 + 2(N-2)(x_1^2 + x_2^2 + \ldots + x_N^2) + f_{\geq 3}(x).
\end{multline}
Here $ f_{\geq 3}(x) \in \RR[x_1, \ldots , x_N]$ is a sum of monomials of order $\geq 3$.
This implies that for $N \geq 3$,  $F$ has a local minimum in $x=0$.

To find the   Hesse matrix of $f = \log(F)$ at the critical points $\pm \lambda e_i$, $\lambda = \sqrt{\frac{N-2}{N}}$,
 in view of  the symmetry, it is enough to find it at the point $x = \lambda e_1$.

One finds that $H_{f,x}$ at the point $x = \lambda e_1$ is diagonal, and that

\[\begin{aligned}
(H_{f,\lambda e_1})_{11} =  -2N^2\frac{N-2}{N-1}.\\
\end{aligned}\]

while for  $1 < i$
\[(H_{f,\lambda e_1})_{ii} = 2N^2 \left(\frac{(N^2 - 2N - 2)}{2N(N-2)^2} + \frac{N-1}{N} \right) .\]

Hence  \[ H_{f,\lambda e_1} \sim \mathrm{diag}(-2N^2, 2N^2, \cdots, 2N^2)\]
as $N \to \infty$.

\begin{remarkk}
We have later found that this example, for $N=3$,i.e. in the case of the octahedron,
had already been given in section $5$ of \cite{motzkin-walsh}.
\end{remarkk}
\subsection{Three elementary  examples in $\RR^3$} \

\noindent
1) Consider the four vertices  $w_1=e_1$, $w_2=e_2$, $w_3=e_3$, $w_4=e_1+e_2+e_3$ of the regular symplex  in $\RR^3$.

Here
$$
F(x):= \prod_{i=1}^4 |x-w_i|^2.
$$
 has  four absolute minima in $w_1, w_2, w_3, w_4$, a local minimum in the barycenter $B:= \frac 12 w_4$ and 4 non degenerate critical points (of negativity 1) in
$$
y_i : =  \frac 13 w_i + \frac 23 B , \ 1 \leq i \leq 4.
$$

\medskip
\noindent
2) Consider the following eight vertices of the regular cube  in $\RR^3$:
$$
\{ w_1, \ldots , w_8 \}= \{ 0,e_1, e_2, e_3, e_1+e_2,e_1+e_3, e_2+e_3, e_1+e_2+e_3\}.
$$
$$
F(x):= \prod_{i=1}^8 |x-w_i|^2.
$$

Then $F$ has eight absolute minima in $w_1, \ldots, w_8$, a local minimum in  $\frac 12 w_8$ and further 8 non degenerate critical points (of negativity 1).

\medskip
\noindent
3) Consider the following six points  in $\RR^3$:
$$
\{w_1, \ldots , w_6\} = \{e_1, e_2, e_3, e_1+e_2,e_1+e_3, e_2+e_3\}.
$$
$$
F(x):= \prod_{i=1}^6 |x-w_i|^2.
$$

Then $F$ has six absolute minima in $w_1, \ldots, w_6$, a local minimum in $\frac 12 (e_1+e_2+e_3)$ and further 6 non degenerate critical points (of negativity 1).

\subsection{The vertices of the regular simplex in $\RR^N$} \

\noindent
 Let $N \geq 4$ be a natural number and consider $w_1, w_2, \ldots , w_{N} \in \RR^{N}$, where
$$
w_i:= e_i, \ 1 \leq i \leq N.
$$
\noindent
Here $e_i$ is the i-th standard basis vector of $\RR^N$. Let
$$
F(x):= \prod_{i=1}^N |x-e_i|^2
$$
and $f(x) := \log(F(x))$.

\begin{prop}\label{hminima}
Then $F$ has $N$ absolute minima in the points $x = e_i$, $i=1, \ldots, N$,
a local minimum in the barycentre $x=B:= \frac{1}{N}\sum_{i=1}^N e_i$ and $N$ non degenerate critical points
of negativity 1 in $Q_i := \frac{2}{N-1} B + \frac{N-3}{N-1} e_i$, $i=1, \ldots, N$.

More precisely,  $H_{f,B}$ has two eigenvalues:
\begin{itemize}
\item $\frac{2N^3}{N(N-1)}$ with multiplicity 1 whose eigenvector is parallel to $B$
and
\item $2(N-3)(\frac{N}{N-1})^2$ with multiplicity $N-1$.
\end{itemize}
The Hessian matrix $H_{f,Q_i}$ has three eigenvalues:
\begin{itemize}
\item $-\frac{(N-3)N^2}{2(N-2)}$ has multiplicity one with eigenvector $B-e_i$,
\item $\frac{(N-1)N^2}{2(N-2)}$ with multiplicity one with eigenvector $B$, and
\item $\frac{8+(-3+N) N \left(4+N^2\right)}{2 (-2+N)^2}$ with multiplicity $(N-2)$.
\end{itemize}
\end{prop}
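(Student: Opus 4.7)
The proof rests entirely on the symmetry group $G = S_N$ of the configuration, which acts orthogonally on $\RR^N$ by coordinate permutation, stabilizes $\{e_1,\dots,e_N\}$ as a set, and hence leaves $F$ and $f$ invariant. The barycenter $B$ is the unique $G$-fixed point in the affine span of the simplex, while each $Q_i$ is a fixed point of the stabilizer $S_{N-1} \leq S_N$ of $e_i$. Both the location of the extra critical points and the diagonalization of the Hessian then follow naturally from this action combined with Lemmas \ref{gauss} and \ref{HesseMatrix}.

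\textbf{Critical points and Hessian at $B$.} The characterization in the proof of Lemma \ref{gauss} states that $x$ is critical iff $x = \sum t_k e_k$ with $t_k = |x-e_k|^{-2}/\sum_l |x-e_l|^{-2}$. For $B$, the constancy $|B-e_k|^2 = (N-1)/N$ gives $t_k = 1/N$, reproducing $B$. For $Q_i$, the explicit coordinates ($(N-2)/N$ in the $i$-th slot and $2/(N(N-1))$ elsewhere) yield $|Q_i-e_i|^2 = 4/(N(N-1))$ and $|Q_i-e_k|^2 = 2(N-2)/N$ for $k \neq i$, and the criticality identity is then verified in two lines. The Hessian at $B$ is computed via Lemma \ref{HesseMatrix}: the constancy of $|B-e_k|^2$ reduces the work to evaluating $\sum_k(B-e_k)(B-e_k)^T$, which is $E_N - J/N$ for $J$ the all-ones matrix (checked entrywise). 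Collecting terms gives $H_{f,B}$ proportional to $(N-3)E_N + (2/N)J$, whose eigenvectors are $B$ (on which $J$ has eigenvalue $N$) and $B^\perp$ (on which $J$ is zero); the two stated eigenvalues follow, both positive for $N \geq 4$, so $B$ is a non-degenerate local minimum.

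\textbf{Hessian at $Q_i$.} The subgroup $S_{N-1}$ fixing $e_i$ decomposes $\RR^N$ orthogonally into its trivial isotypic component $T = \RR e_i \oplus \RR\sum_{k \neq i}e_k$ (two-dimensional) and the standard representation $V$ (vectors with $v_i = 0$ and $\sum_{k\neq i}v_k = 0$, of dimension $N-2$). By Schur's lemma $H_{f,Q_i}$ is block-diagonal, scalar on $V$. Inside $T$, one checks $\langle B, B-e_i\rangle = |B|^2 - 1/N = 0$, so $B$ and $B-e_i$ are an orthogonal candidate basis of eigenvectors. The computation hinges on three inner-product identities extracted from the coordinates of $Q_i$:
\[
\langle Q_i-e_k, B\rangle = 0, \qquad \langle Q_i-e_k, B-e_i\rangle = -\frac{N-2}{N}, \qquad \sum_{k \neq i}(Q_i-e_k) = -(N-2)(B-e_i),
\]
all valid for $k \neq i$. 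Substituting these into Lemma \ref{HesseMatrix} confirms that $B$ and $B-e_i$ are eigenvectors with the claimed eigenvalues and determines the scalar acting on $V$. The negativity-$1$ conclusion is immediate: only the eigenvalue along $B-e_i$ is negative for $N \geq 4$, so $Q_i$ is a non-degenerate saddle of index~$1$, in agreement with Theorem~\ref{index}.

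\textbf{Main obstacle.} The symmetry does essentially all the conceptual work, reducing an $N \times N$ eigenvalue problem to a $2 \times 2$ block plus a scalar. What remains is disciplined arithmetic: the plurality of formally similar inner products in the third step makes sign and factor errors easy, so the trace identity $\operatorname{tr} H_{f,Q_i} = N^2(N-1)/2$, combined with the already-computed eigenvalues on $B$ and $B-e_i$, serves as a useful consistency check for the scalar on $V$.
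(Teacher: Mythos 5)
Your proposal is correct in outline and in all the identities you state; note that the paper itself gives no proof of this proposition (the text explicitly defers ``most of the elementary calculations'' to the arXiv version), so there is nothing in-paper to compare against, and your symmetry-plus-Schur reduction combined with Lemmas \ref{gauss} and \ref{HesseMatrix} is exactly the natural route. I verified your computations: $|B-e_k|^2=(N-1)/N$, the coordinates of $Q_i$, $|Q_i-e_i|^2=4/(N(N-1))$, $|Q_i-e_k|^2=2(N-2)/N$, the three inner-product identities, $\sum_k(B-e_k)(B-e_k)^T=E_N-J/N$, and $H_{f,B}=\frac{2N^2}{(N-1)^2}\bigl((N-3)E_N+\frac{2}{N}J\bigr)$ all hold, and they yield the stated eigenvalues at $B$ and the stated eigenvalues $\frac{(N-1)N^2}{2(N-2)}$ (on $B$) and $-\frac{(N-3)N^2}{2(N-2)}$ (on $B-e_i$) at $Q_i$.

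One substantive point: if you actually carry out your final step and compute the scalar on $V$, you get
\[
2\left(\frac{1}{\alpha}+\frac{N-1}{\beta}-\frac{2}{\beta^2}\right)=\frac{N^3(N-3)}{2(N-2)^2},
\qquad \alpha=\tfrac{4}{N(N-1)},\ \beta=\tfrac{2(N-2)}{N},
\]
which is \emph{not} the value $\frac{8+(N-3)N(4+N^2)}{2(N-2)^2}$ printed in the proposition. Your own consistency check settles this: $\operatorname{tr}H_{f,Q_i}=2(N-2)\sum_k|Q_i-e_k|^{-2}=N^2(N-1)/2$ is reproduced by $\frac{N^3(N-3)}{2(N-2)^2}$ (with multiplicity $N-2$) together with the other two eigenvalues, but not by the printed formula; the two differ by $\frac{2(N-1)}{N-2}$ per eigenvalue. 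A direct check at $N=4$ with $v=e_2-e_3$ gives $H_{f,Q_1}v=8v$, whereas the printed formula gives $11$. So your method is correct and in fact exposes a typo or slip in the statement; the qualitative conclusions (nondegeneracy, positivity of the $V$-eigenvalue for $N\geq 4$, hence negativity exactly $1$ at $Q_i$) are unaffected.
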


\subsection{The regular triangular prism: an example with two local minima} \
\noindent

This new example is a warm up for the more complicated ones which shall be described in the next section.

Fix $a \in \RR_+$ and consider the following six points: $w_j:=(u_j,a), \ w'_j:=(u_j,-a) \in \CC \times \RR = \RR^3$, where  for $1 \leq j \leq 3$ we set $u_j := e^{j \frac{ 2\pi i}{3}}$. Set
$$
F_a(x):= \prod_{j=1}^3 |x-w_j|^2 \cdot  |x-w'_j|^2.
$$
$F_a$ has six absolute minima in the points $w_j, w'_j$.

We shall prove the following
\begin{prop} \label{2minima} \
\begin{enumerate}
\item \underline{$a=1$:} $F_1$ has a critical point in $0$, whose Hessian has nullity 1 (and positivity two).
\item  \underline{$a<1$:} $F_a$ has a local minimum in $0$ with non degenerate Hessian and no further critical points of the form $(0,0,x_3)$.
\item \underline{$a>1$:} $F_a$ has a non degenerate critical point in $0$ with negativity 1, and two local minima in $(0,0, \pm \sqrt{a^2-1})$ .
\end{enumerate}
\end{prop}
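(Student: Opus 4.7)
The plan is to exploit the symmetries of the prism. The group $G$ generated by the rotation $R$ of order three about the $z$-axis and the reflection $\sigma\colon (x_1,x_2,z) \mapsto (x_1,x_2,-z)$ preserves $\Sigma = \{w_1, w_2, w_3, w'_1, w'_2, w'_3\}$, so $f_a$ is $G$-invariant. At any point of the $z$-axis the pair $(\partial_{x_1} f_a, \partial_{x_2} f_a)$ is an $R$-fixed covector in $(\RR^2)^\vee$, hence vanishes; so the axis critical points of $f_a$ are exactly the zeros of $g'$, where
\[ g(z) := f_a(0,0,z) = 3\log(1+(z-a)^2) + 3\log(1+(z+a)^2). \]
A short computation gives
\[ g'(z) = \frac{12\,z\,(1 + z^2 - a^2)}{(1+(z-a)^2)(1+(z+a)^2)}, \]
so the axis critical points are $z = 0$ (always) and $z = \pm\sqrt{a^2-1}$ (only when $a>1$).

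Next, by $R$-invariance the Hessian of $f_a$ at $(0,0,z_0)$ is block diagonal: the mixed entries $\partial_{x_1}\partial_z$, $\partial_{x_2}\partial_z$ vanish because they form an $R$-fixed vector in $\RR^2$, and the $2 \times 2$ block on the $xy$-part commutes with rotation by $2\pi/3$, hence equals $\lambda(z_0) I_2$ for some scalar $\lambda(z_0)$; the $z$-entry is just $g''(z_0)$. So only two numbers per critical point are needed. Using Lemma \ref{HesseMatrix} together with the averages $\sum_j \cos^2\theta_j = \sum_j \sin^2\theta_j = 3/2$ and $\sum_j\cos\theta_j\sin\theta_j = 0$, I find at the origin
\[ \lambda(0) = \frac{12\,a^2}{(1+a^2)^2}, \qquad g''(0) = \frac{12(1-a^2)}{(1+a^2)^2}. \]
At $z_0 = \sqrt{a^2-1}$ the identity $(a-z_0)(a+z_0) = 1$ yields the clean simplifications $1 + (z_0 \mp a)^2 = 2a(a\mp z_0)$, and a short cancellation gives
\[ \lambda(z_0) = \frac{3}{a^2}, \qquad g''(z_0) = \frac{6(a^2-1)}{a^2}. \]

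Reading off signs finishes the three cases. For $a=1$ the Hessian at the origin has eigenvalues $(3,3,0)$: nullity one, positivity two, which is (1). For $a<1$ both $\lambda(0)$ and $g''(0)$ are strictly positive, so the origin is a non-degenerate local minimum, and the first display shows that there are no other critical points on the axis, which is (2). For $a>1$ one has $g''(0) < 0 < \lambda(0)$ at the origin, which is therefore a non-degenerate saddle of negativity one; while at the two extra axis critical points $(0,0,\pm\sqrt{a^2-1})$ both $\lambda$ and $g''$ are strictly positive, so those are non-degenerate local minima, giving (3). The main obstacle is essentially bookkeeping: spotting the identity $(a-z_0)(a+z_0) = 1$ at $z_0=\sqrt{a^2-1}$, which collapses otherwise ungainly rational expressions for $\lambda(z_0)$ and $g''(z_0)$ into the one-line formulas above.
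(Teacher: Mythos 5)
Your proof is correct — I checked the formulas for $g'$, $g''(0)$, $\lambda(0)$, and (using $(a-z_0)(a+z_0)=1$) the values $\lambda(z_0)=3/a^2$, $g''(z_0)=6(a^2-1)/a^2$ — and it follows essentially the paper's approach: the paper omits the elementary computations (deferring to the arXiv version), but your symmetry argument (reduce to the axis via $\ZZ_3$-invariance, block-diagonalize the Hessian with the $\CC$-block a scalar, read off $g''$ on the $\RR$-factor) is exactly the method of Steps I--III used for the general $3h$-point prism in the following section, of which this proposition is the stated warm-up. The only cosmetic remark is that the statement concerns $F_a$ while you work with $f_a=\log F_a$; since $F_a>0$ on the axis, the critical points and Hessian signatures agree there, so nothing is lost.
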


\section{$3h$ points on an equilateral triangular prism,  with $h$ preassigned local (non absolute) minima}\label{preassigned}

Let $r_1 < r_2 < \cdots < r_h$ be arbitrary real numbers, $h>1$.
We are going to construct $3h$ points  $w_1,w_2, \cdots,w_{3h} \in \mathbb{R}^3$ such that $F(x) = \prod_{j=1}^{3h} |x - w_j|^2$
has $h$ local (non absolute and non degenerate) minima at the points $(0,0,r_j)$, $j = 1,\cdots , h$.\\

Actually, $F$ is going to have also $h-1$ saddles (non degenerate critical points of negativity 1) on the $x_3$-axis.

\subsection{The auxiliary polynomial $P$}

Given the $r_j$'s take $s_j$ such that $r_j < s_j < r_{j+1}$ for  $j=1,\cdots,h-1$.
Let $P(X)$  be the polynomial
$$ P'(X) = \left( \prod_{j=1}^{h-1}(X - r_j)(X-s_j) \right) (X-r_h) \, .$$

Then $\mathrm{deg}(P) =2h $ and $P(X)$ is bounded from below. So we can assume w.l.o.g. that $P(X) > 0$ for all $X \in \mathbb{R}$.\\
By construction $P(X)$ has $h$ local minima at the $r_j$'s and $h-1$ local maxima at the $s_j$'s.\\

Decompose $P(X)$ as \[ P(X) = \prod_{j=1}^h P_j(X) \, \]
where $P_j(X)$ are degree two monic real polynomials without real roots.
Observe that $P_j \neq P_k$ for $j \neq k$ otherwise the derivative would have a double root,
contradicting our construction of  $P'(X)$.

We can also assume w.l.o.g.  that $P_j(X) > 0$ for all $  X  \in \RR$.

Hence there are real numbers $a_j, b_j \in \mathbb{R}$ such that
\[ P_j(X) = (X - a_j)^2 + b_j^2\]
and we can assume that $b_j > 0$.\\

Summing up we have:

 \begin{equation}\label{Ache} P(X) = \prod_{j=1}^h \left( (X - a_j)^2 + b_j^2 \right) \end{equation}

\subsection{The $3h$ points $w_1, \dots, w_{3h}$} \

 We regard now $\mathbb{R}^3 $ as $ \mathbb{C} \times \mathbb{R}$.
For each $j \in \{1,\cdots,h\}$ we consider the following  3 points $w_j^1, w_j^2, w_j^3$ defined as follows:

\[ w_j^i := (\xi^i b_j , a_j) \]
where $\xi = e^{\sqrt{-1} \frac{2\pi}{3}}$ and the $a_j,b_j$'s  are as in the factorization of $P(X)$ given  in equation (\ref{Ache}).
\begin{prop}
 Let $F(x) := \prod_{j=1}^h |x - w_j^1|^2 |x - w_j^2|^2 |x - w_j^3|^2 $. Then:
 \begin{enumerate}
 \item $F$ has $h$ local (non absolute) nondegenerate minima in  $(0,r_j) \in  \CC \times \RR$, $1 \leq j \leq h$;
 \item $F$ has $h-1$ saddle points (nondegenerate critical points of negativity 1)  in the points $(0,s_j)$, $1 \leq j \leq h-1$;
 \item $F$ has $3h$ absolute minima in the points $w_j^i$, $1 \leq j \leq h$, $1 \leq i \leq 3$;
 \end{enumerate}
\end{prop}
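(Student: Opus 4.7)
The plan will exploit the $\mathbb{Z}/3$ rotational symmetry of the configuration to reduce the three-dimensional problem to the one-variable analysis of $P$ on the symmetry axis, and then to control the transverse Hessian via a short computation together with Theorem \ref{index}.

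First I would observe that the $\mathbb{Z}/3$-action on $\RR^3 = \CC \times \RR$ sending $(z, x_3)$ to $(\xi z, x_3)$ cyclically permutes the triple $\{w_j^1, w_j^2, w_j^3\}$ for every $j$, hence leaves $F$ invariant, with fixed locus the $x_3$-axis. Part (3) is immediate since $F(w_j^i) = 0$. For (1) and (2) I would restrict to the axis: at $x = (0, t)$ one has $|x - w_j^i|^2 = b_j^2 + (t - a_j)^2 = P_j(t)$ independent of $i$, so by (\ref{Ache}) the restriction satisfies $F(0, t) = P(t)^3$ and $\log F|_{\mathrm{axis}}(t) = 3 \log P(t)$, whose critical points are exactly $r_1, \ldots, r_h$ and $s_1, \ldots, s_{h-1}$ by the defining property of $P$. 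Equivariance forces $\nabla F$ at any axis point to lie in the $\mathbb{Z}/3$-fixed subspace, which is vertical; hence these are genuine critical points of $F$ on all of $\RR^3$.

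Next I would analyze the Hessian at $(0, t)$: by symmetry it splits into a vertical $1\times 1$ block $\mu(t)$ and a horizontal block on $\CC = \RR^2$ which, being $\mathbb{Z}/3$-invariant, must equal a scalar $\lambda(t) I_2$. The vertical eigenvalue is $\mu(t) = 3 P''(t)/P(t)$ at a critical point of $P$; since $r_j, s_j$ are simple roots of $P'$ and $P > 0$, one obtains $\mu(r_j) > 0$ and $\mu(s_j) < 0$, both nonzero. The horizontal eigenvalue I would compute from Lemma \ref{HesseMatrix}: summing over $i = 1, 2, 3$, the cross-terms involving $\sum_i \xi^{\pm 2i} = 0$ cancel, leaving
\[
\lambda(t) = 6 \sum_{j=1}^h \frac{(t - a_j)^2}{((t - a_j)^2 + b_j^2)^2}.
\]

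The one delicate point, and the main obstacle, is establishing strict positivity of $\lambda$. The sum above vanishes only when $t = a_j$ for every $j$, i.e., when all the $a_j$ coincide to a common value $a$. If that held, the substitution $Y = X - a$ would give $P'(X) = 2 Y \cdot Q(Y^2)$ with $Q(Y^2) > 0$ for every real $Y$, so $P$ would have a unique real critical point; but by construction $P'$ has $2h - 1 \geq 3$ distinct real roots, a contradiction. Hence $\lambda(t) > 0$ everywhere, and combined with the signs of $\mu$ this yields a nondegenerate local minimum at each $(0, r_j)$ and a nondegenerate saddle of negativity $1$ at each $(0, s_j)$, consistent with the negativity bound of Theorem \ref{index}.
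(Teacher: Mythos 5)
Your proof is correct, and its skeleton (the $\mathbb{Z}/3$-equivariance forcing the gradient to be vertical on the axis, the reduction $F(0,t)=P(t)^3$, and the block-diagonalization of the Hessian into $\lambda\,\mathrm{Id}_{\CC}\oplus\mu$ by irreducibility of the rotation action on $\CC$) is exactly the paper's Steps I--III. The one place where you genuinely diverge is the positivity of the horizontal eigenvalue $\lambda$: the paper disposes of it in one line by noting that for $h>1$ the $3h$ points are not coplanar, so Theorem \ref{index} (positivity of the Hessian at least $N-1=2$ at every critical point) forces $\lambda>0$. You instead compute $\lambda(t)=6\sum_j (t-a_j)^2/P_j(t)^2$ explicitly from Lemma \ref{HesseMatrix} --- the cross-term cancellation and the resulting formula are correct --- and rule out $\lambda=0$ by showing that all the $a_j$ coinciding would force $P'(X)=2Y\,Q(Y^2)$ to have a single real root, contradicting the $2h-1\geq 3$ prescribed roots. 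Your route is more elementary and self-contained (it avoids the plurisubharmonicity machinery behind Theorem \ref{index} and even shows $\lambda(t)>0$ at every axis point, not just at critical points), at the cost of a computation; the paper's route is shorter but leans on the main theorem. Both are sound, and your sign analysis of $\mu$ at the $r_j$ and $s_j$ matches the paper's.
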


\begin{proof} \

\noindent
{\bf Step I:} {\it The $h + h-1$ points $(0,r_j)$ and $(0,s_j)$ are critical points of $F$.\rm}

To see this write $x = (z,t)$ so that
\[F(x) = F(z,t) = \prod_{j=1}^h \left(|z - \xi b_j|^2 + (t-a_j)^2 \right) \left(|z - \xi^2 b_j|^2 + (t-a_j)^2 \right) \left(|z - b_j|^2 + (t-a_j)^2 \right)\]
hence
 \[ F(\xi z, t) = F(z,t).
  \]
I.e.,  $F$ is invariant for  the 120 degree rotation on the first factor $\mathbb{C}$.

This implies that the gradient $\nabla F$ at the points $(0,r_j)$ and $(0,s_j)$ has zero $\mathbb{C}$-component.
The $\mathbb{R}$-component of $\nabla F$ is then $\frac{\mathrm{d} F(0,t)}{\mathrm{d} t}$. Now
\[ F(0,t) = \prod_{j=1}^h \left(b_j^2 + (t-a_j)^2 \right) \left(b_j^2 + (t-a_j)^2 \right) \left(b_j^2 + (t-a_j)^2 \right)= P(t)^3\]
where $P$ is as in equation (\ref{Ache}). So, as we claimed, $\frac{\mathrm{d} F(0,t)}{\mathrm{d} t}$ is zero at the $r_j$'s and $s_j'$s.\\

\noindent
{\bf Step II:} {\it At each critical point of the form $(0,r_j)$ and $(0,s_j)$ the Hessian matrix $H_{F}$ has both factors $\mathbb{C}$ and $\mathbb{R}$
as invariant subspaces. Moreover $H_{F}|_{\mathbb{C}} = \lambda \mathrm{Id}_{\mathbb{C}}$, for $\lambda \in \mathbb{R}$. \rm}
Let in fact $R_{120}:\mathbb{R}^3 \to \mathbb{R}^3$ be the rotation of 120 degrees, induced by the multiplication by $\xi$  on the first factor $\mathbb{C}$. Then, as observed above, $F \circ R_{120} = F$. The critical points $(0,r_j)$ and $(0,s_j)$ are fixed by $R_{120}$ hence: \[ R_{120} H_{F,\mathbf{p}} = H_{F,\mathbf{p}} R_{120} \, \]
where $\mathbf{p} \in \{ (0,r_j), (0,s_j) \}$. Since the $\mathbb{R}$ factor is the unique fixed line by $ R_{120}$ it follows that $H_{F,\mathbf{p}}$ preserves the $\mathbb{R}$ line hence $H_{F,\mathbf{p}}$ also preserves $\mathbb{C}$. The $\mathbb{Z}_3$-action generated by $ R_{120}$ is irreducible on $\mathbb{C}$. It follows that $H_{F}|_{\mathbb{C}} = \lambda \mathrm{Id}_{\mathbb{C}}$ as we claimed.\\

\noindent
{\bf Step III:} {\it The points $(0,r_j)$ are local (non absolute) minima of $F$ whilst $H_{F}$ has negativity 1 and is nondegenerate at the points $(0,s_j)$. \rm }
Since $h > 1$ the $3h$ points are not coplanar. So at each critical point $\mathbf{p} \in \{ (0,r_j), (0,s_j) \}$ the constant $\lambda$ in the first block of the Hessian matrix $H_{F,\mathbf{p}}$ must be positive, i.e. $\lambda > 0$. The constant in the $\mathbb{R}$ direction is given by
\[ \frac{\mathrm{d}^2 F(0,t)}{\mathrm{d} t^2} = \frac{\mathrm{d}^2 ( P(t)^3 )}{\mathrm{d} t^2} \]

By construction $P(X)$ has non degenerate local minima at the $r_j$'s and non degenerate local maxima at the $s_j$'s.
Since $P > 0$ the cubic power does not change the sign of the derivatives and  we get that also $F(0,t)$ has has non degenerate local minima at the $r_j$'s and non degenerate local maxima at the $s_j$'s.
\end{proof}

\begin{rem}
One can show that, for general choice of the numbers $r_j, s_j$,
 $F$ has exactly $3 h$ further saddle points (nondegenerate critical points of negativity 1).

\end{rem}

\begin{figure}[h!]
\centering
\includegraphics[trim=5cm 8cm 3cm 9cm, clip,draft=false,width=8cm,height=7cm]{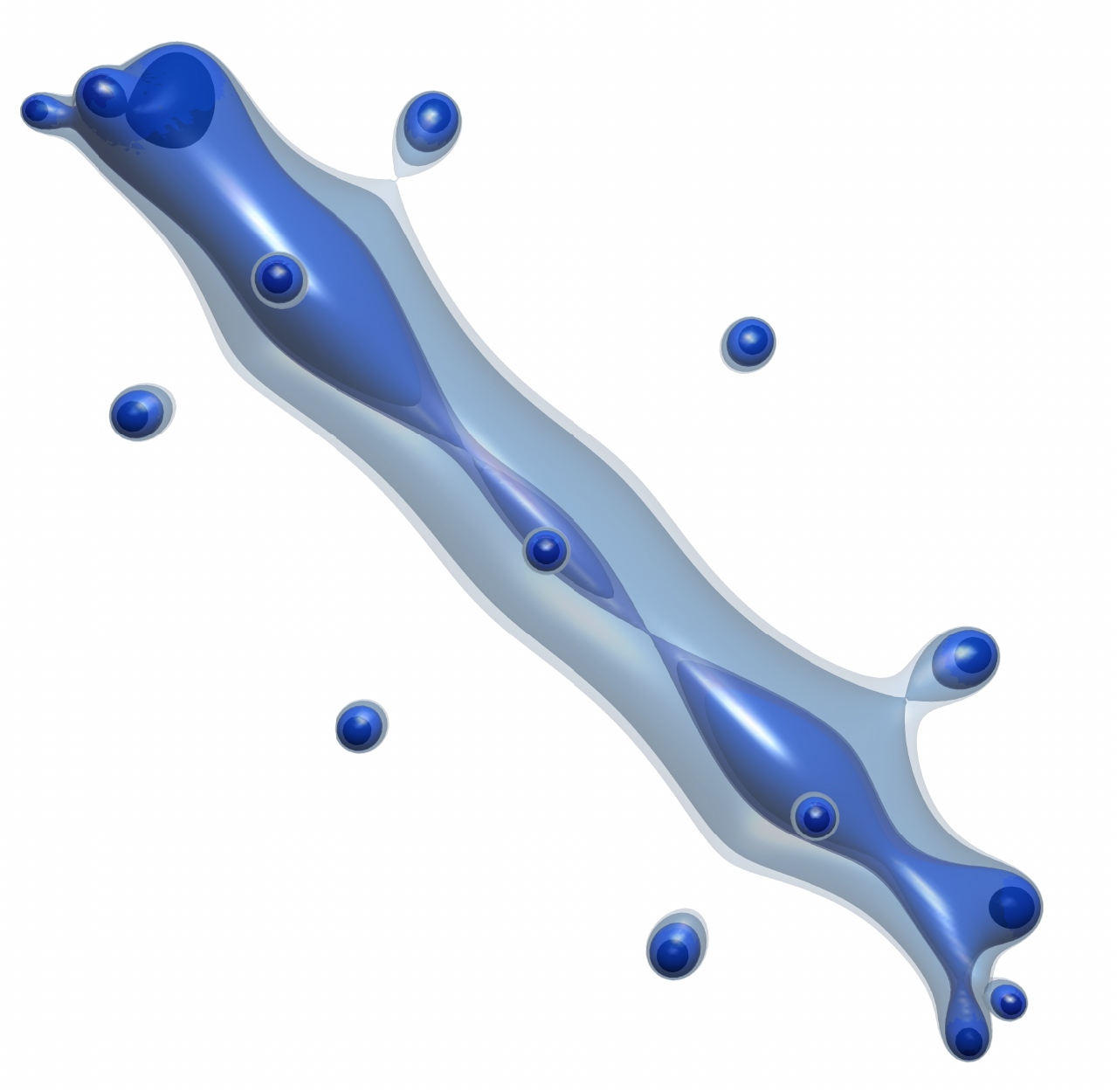}
\caption{Several lemniscates of a perturbation of the configuration for $r=15$ constructed with the method of this section.}
\end{figure}

\clearpage
\section{Appendix}

Let $w_1,\cdots,w_r$  be $r$ points of $\mathbb{R}^{n}$
and let $f(x) : = \sum_{i = 1}^r  \log(\| x - w_i \|^2)$ be the associated logarithmic potential.

In this appendix we prove the following theorems.

\begin{theo}\label{MorseCairns} The critical points of the logarithmic potential $f$ are isolated.
That is to say, there is no arc of critical points.
\end{theo}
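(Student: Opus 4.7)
My plan is to argue by contradiction along the Plateau route alluded to in the introduction. The proof of Theorem~\ref{index} has already reduced the task to the following situation: the critical set $\mathcal{C}$ of $f$ contains a non-isolated component $\Gamma$ which, by the generalized Morse Lemma~\ref{localform}, must be a smooth embedded simple closed curve of local minima, along which $f$ is constant equal to some value $c$; moreover in a tubular neighborhood $U$ of $\Gamma$ one has $f \geq c$, with equality exactly on $\Gamma$.

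The first step is to apply Douglas' solution of the Plateau problem to the smooth Jordan curve $\Gamma$, obtaining a minimal disk $D \subset \RR^N$ with $\partial D = \Gamma$; by boundary regularity, $D$ is smooth up to the boundary, so near any $p \in \Gamma$ there are interior points of $D$ lying in $U \setminus \Gamma$, where $f > c$ strictly. The core step is to show that $f|_D$ is subharmonic on $D$ with respect to the induced metric. For any smooth function $U \colon \RR^N \to \RR$ and any immersed surface $\Sigma \subset \RR^N$ one has
\[
\Delta_\Sigma(U|_\Sigma) \;=\; \mathrm{tr}_{T\Sigma}(\mathrm{Hess}\,U) \;+\; \langle \nabla U, \vec H_\Sigma\rangle,
\]
with $\vec H_\Sigma$ the mean curvature vector; since $D$ is minimal the second term vanishes. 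Using the explicit Hessian from Lemma~\ref{HesseMatrix}, a direct computation shows that for any real orthonormal pair $e_1,e_2 \in \RR^N$ spanning a 2-plane $T$ and for any $w \in \RR^N$,
\[
\mathrm{tr}_T\,\mathrm{Hess}\bigl(\log|x-w|^2\bigr) \;=\; \frac{4}{|x-w|^4}\bigl(|x-w|^2 - |\pi_T(x-w)|^2\bigr) \;\geq\; 0,
\]
where $\pi_T$ denotes the orthogonal projection onto $T$. Summing over $w = w_1,\dots,w_r$ yields $\Delta_D(f|_D) \geq 0$, so $f|_D$ is subharmonic.

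The contradiction is then immediate: by the maximum principle $\max_{\overline D}(f|_D) = \max_{\partial D} f = c$, so $f \leq c$ on all of $D$; on the other hand interior points of $D$ sufficiently close to $\partial D = \Gamma$ lie in $U \setminus \Gamma$, where $f > c$. The main delicate ingredient of the plan is the boundary regularity of the Douglas minimizer, which I would cite from the standard minimal-surface literature; possible interior branch points of $D$ do not affect the argument, since the contradiction is produced near $\partial D$ where $D$ is smoothly embedded. As the introduction suggests, an entirely different proof can be given using Morse theory alone, by comparing the topology of sublevel sets of $f$ across the critical value $c$ with the known exhaustion structure of $f$ on $\RR^N$ (going to $-\infty$ at the $w_i$ and to $+\infty$ at infinity).
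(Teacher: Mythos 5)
Your argument is, in substance, the paper's second proof of Theorem \ref{MorseCairns} (the Douglas/Plateau route), and it is correct; the two places where you deviate from the paper are worth recording. First, you obtain subharmonicity of $f$ on the minimal disk from the identity $\Delta_\Sigma(U|_\Sigma)=\mathrm{tr}_{T\Sigma}(\mathrm{Hess}\,U)+\langle\nabla U,\vec H_\Sigma\rangle$ together with the direct computation $\mathrm{tr}_T\,\mathrm{Hess}(\log|x-w|^2)=\tfrac{4}{|x-w|^4}\bigl(|x-w|^2-|\pi_T(x-w)|^2\bigr)\ge 0$, which is correct; the paper instead passes to an even-dimensional ambient space, chooses a compatible complex structure $J$ making the tangent plane of the (conformally parametrized) disk $J$-invariant, and identifies $\Delta(f\circ\varphi)$ with twice the Levi form, which is positive by Lemma \ref{leviform}. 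The two computations are equivalent (the trace of the Hessian over a $J$-invariant $2$-plane is twice the Levi form in that complex direction), but yours is more elementary and avoids the parity reduction. Second, you derive the final contradiction from boundary regularity of the Douglas minimizer (interior points of $D$ near $\Gamma$ lie in $U\setminus\Gamma$, where $f>c$), whereas the paper avoids citing boundary regularity altogether: it argues that the maximum of $h=f\circ\varphi$ must be attained at an interior point, since otherwise $h\equiv c$ and $\varphi(\overline D)\subset f^{-1}(c)$, contradicting the fact that $\Gamma$ is an entire connected component of $f^{-1}(c)$. Your route is legitimate provided you cite an actual boundary regularity theorem (Hildebrandt/Lewy, applicable here since $\Gamma$ is a smooth, indeed real-analytic, Jordan curve), but it is a heavier ingredient than necessary. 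One point you should tighten: the Douglas solution is a priori only a branched, almost conformal harmonic map, so the induced metric degenerates at the isolated branch points; since the maximum principle is a global statement on the disk, you cannot simply dismiss interior branch points on the grounds that the contradiction is produced near $\partial D$ --- instead observe, as the paper does, that $\Delta(f\circ\varphi)\ge 0$ holds off the branch points and extends across them by continuity of the Laplacian. Finally, note that the paper's first proof is the purely topological one you only allude to at the end: it deduces Theorem \ref{MorseCairns} from Theorem \ref{spheres} (all smooth level-set components are spheres, obtained by Morse theory from the large spheres at infinity), since a circle of minima would produce nearby level-set components with first Betti number at least $1$.
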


\begin{theo}\label{spheres} A smooth connected component of a level set $f^{-1}(c) \subset \mathbb{R}^{n}$ is diffeomorphic to the standard unit sphere $\mathbb{S}^{n-1} \subset \mathbb{R}^n $.
\end{theo}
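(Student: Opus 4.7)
The plan is to apply Morse theory to $F=e^f$ (which has the same level sets as $f$) and to combine the Morse-index restriction of Theorem~\ref{index} with the sharp critical-point count of Theorem~\ref{extra}(1)---exactly $r+h$ local minima and $r+h-1$ index-$1$ saddles, once $F$ is Morse---in order to force every sublevel set $\{F\le v\}$ to be a disjoint union of smooth $n$-balls. A generic $C^\infty$-small perturbation of the $w_i$'s deforms any given smooth regular-level component diffeomorphically via the implicit function theorem on a tubular neighbourhood, so I may first reduce to the Morse case.

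The key step will be the following counting argument. Let $c(v)$ denote the number of connected components of $\{F\le v\}\subset\RR^n$. Then $c(v)=r$ for $v$ just above $0$ (these are $r$ disjoint small balls around the $w_i$'s), and $c(v)=1$ for $v$ larger than every critical value (the topology stabilises above $v_{\max}$ and $\{F\le v\}$ exhausts $\RR^n$ as $v\to\infty$). Standard Morse theory says that crossing an index-$0$ critical value increases $c$ by $1$, whereas crossing an index-$1$ saddle either lowers $c$ by $1$ (a \emph{merging} saddle, whose $1$-handle joins two previously distinct components) or leaves $c$ unchanged (a \emph{handle} saddle, whose $1$-handle is attached inside a single component). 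Writing $m$ for the number of merging saddles, summation over all critical values gives
\[
1-r \;=\; c(+\infty)-c(0^+) \;=\; h-m,
\]
so $m=r+h-1$, which is exactly the total number of saddles. Hence \emph{every} saddle is of the merging type; the existence of any handle saddle is ruled out by the count. This is the main obstacle of the argument, and it is surmounted precisely by Theorem~\ref{extra}(1).

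I then conclude by induction on $v$ that every connected component of $\{F\le v\}$ is diffeomorphic to the standard closed $n$-ball: the base case is clear, an index-$0$ passage adjoins a disjoint new $n$-ball, and a merging $1$-handle performs a boundary-connected sum of two disjoint $n$-balls, which is again diffeomorphic to an $n$-ball in the smooth category. Consequently, for every regular value $v$ each connected component of $F^{-1}(v)=\partial\{F\le v\}$ is diffeomorphic to $\mathbb{S}^{n-1}$. Finally, a smooth connected component of $f^{-1}(c)=F^{-1}(e^c)$ at a possibly critical level $v=e^c$ deforms, through an interval of nearby regular levels under the gradient flow of $F$ (well-defined on the smooth component by the absence of critical points there), into such a sphere, and is therefore itself diffeomorphic to $\mathbb{S}^{n-1}$.
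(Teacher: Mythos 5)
Your proof is correct, but it takes a noticeably different route from the paper's. The paper does not reduce to the Morse case: it works with the possibly degenerate $f$ directly (allowing, at that stage of the logical order, circles of minima, which would force level components with $b_1\geq 2$), observes that crossing an index-$1$ critical point either merges two components or increases $b_1$ --- so $b_1$ can never return to $0$ once it is positive --- and then rules out all $b_1$-creating handles by the separate Lemma \ref{bigspheres}, which shows by an explicit radial estimate ($F=\|x\|^{2r}+g$ with $g$ of lower order, so each ray from the barycentre meets $X_c$ exactly once) that the level sets for $c\gg 0$ are genuine spheres. You instead rule out $b_1$-creating handles by a counting argument: the saddle count $r+h-1$ of Theorem \ref{extra}(1) (which is just $\chi(\RR^n)=1$) exactly matches the number of merges needed to bring $r+h$ components down to one, so every $1$-handle must be of merging type. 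The two mechanisms are essentially dual ($b_1$-monotonicity plus $b_1=0$ at the top, versus $b_0-b_1=1$ plus $b_0=1$ at the top), and yours buys a cleaner inductive statement (every sublevel component is an $n$-ball) at the price of two external inputs: the density of the local-Morse locus in parameter space (needed for your perturbation step, and proved in the paper's configuration-space section via Sard and Tarski--Seidenberg, so this is legitimate but not free), and Theorem \ref{extra}(1). On the latter you should add one caveat to avoid an apparent circularity: the isolatedness claim inside Theorem \ref{index} is deduced in the paper from Theorem \ref{MorseCairns}, which itself rests on Theorem \ref{spheres}; since you only invoke \ref{extra}(1) for a Morse $F$, where it follows from the index bound and $\chi(\RR^n)=1$ alone, you should say so explicitly. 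Finally, since a local Morse function need not be a global one, several saddles may share a critical value; you should attach the corresponding disjoint $1$-handles one at a time so that ``merging versus internal handle'' is well defined at each step --- the count then still forces every handle to be merging, and in particular two saddles cannot both join the same pair of components.
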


We notice that the first theorem is a consequence of the second. Indeed, if the critical points are not isolated then by Theorem \ref{index}
there is a circle of   local minima. Then by Lemma \ref{localform} the level sets of $f$ near the circle are diffeomorphic to  an orientable fibre bundle over $\mathbb{S}^1$  with fibre $ \mathbb{S}^{n-2}$, which has first Betti number $b_1 = 1$ for
$n \geq 4$, and $b_1=2$ for $n=3$, since for $n=3$ an orientable such bundle is diffeomorphic to a torus.

\begin{rem} The question about existence of an arc of critical points for the electrostatic potential generated by a finite number of charges of the same sign was posed by Cairns and Moser in their book \cite[page 294]{cairns-morse}. It is still an open problem.
\end{rem}

 \noindent{\it Proof of Theorem \ref{spheres}.}
 
We want  to show that all the smooth connected components of the level sets $f^{-1}(c) \subset \mathbb{R}^{n}$ 
 are  diffeomorphic to the standard unit sphere $\mathbb{S}^{n-1} \subset \mathbb{R}^n $.
 
 Now, all the  critical points of $f$ have negativity at most $1$, and are either non degenerate, or belong 
 to a circle of minima.  These circles of minima yield,
 as we already saw,  connected components with first Betti number $ b_1 \geq 2$.
 
 Now, by Morse theory, passing through a non degenerate critical point with negativity equal to one,
 either the zeroth Betti number $b_0$ diminishes by one, and this means that we have two connected components of which
 we take the connected sum, or the first Betti number $b_1$ grows by $1$.
 
 This shows that, once the first Betti number $b_1 \geq 1$, it shall never go back to be equal to zero.
 
 It suffices therefore to show:
 
 \begin{lemma}\label{bigspheres}
 For $ c >> 0$ the level sets $f^{-1}(c) $ are spheres.
 
 \end{lemma}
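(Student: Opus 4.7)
The plan is to show that for $c$ sufficiently large, every ray emanating from the origin meets $f^{-1}(c)$ in exactly one point, giving an explicit diffeomorphism $\mathbb{S}^{n-1} \to f^{-1}(c)$. Heuristically, $f(x) = 2r \log \|x\| + O(1)$ at infinity, so far-out level sets look like large spheres; the task is to make this precise via radial monotonicity.

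First I would establish that $f$ is strictly increasing along rays outside a large ball. Writing
\[
\nabla f(x) \cdot x = \sum_{i=1}^r \frac{2\,(\|x\|^2 - \langle w_i, x\rangle)}{\|x - w_i\|^2},
\]
and using Cauchy--Schwarz $\langle w_i, x\rangle \leq \|w_i\|\,\|x\|$, every summand is strictly positive as soon as $\|x\| > r_0 := \max_i \|w_i\|$. Hence the radial derivative is positive on $\mathbb{R}^n \setminus \overline{B_{r_0}}$, and moreover $\nabla f(x)\cdot x$ tends to $2r > 0$ as $\|x\| \to \infty$.

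Next, since $f(x) \to +\infty$ as $\|x\| \to \infty$ and $f$ is continuous and bounded on $\partial B_R$ for every $R > r_0$, I can pick $R > r_0$ and then choose $c_0 > \max_{\|x\|=R} f(x)$. For any $c \geq c_0$, along each ray $t \mapsto f(tu)$ with $u \in \mathbb{S}^{n-1}$ and $t \geq R$, the function is strictly increasing from $f(Ru) < c$ to $+\infty$, so there is a unique $r(u) > R$ with $f(r(u)u) = c$. This shows simultaneously that $f^{-1}(c) \subset \{\|x\| > R\}$ and that the radial map
\[
\Phi : \mathbb{S}^{n-1} \longrightarrow f^{-1}(c), \qquad \Phi(u) = r(u)\,u,
\]
is a bijection.

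Finally, since $\partial_t f(tu)\big|_{t=r(u)} > 0$, the implicit function theorem ensures that $r$ is a smooth function of $u$; the inverse of $\Phi$ is simply the radial projection $x \mapsto x/\|x\|$, so $\Phi$ is a diffeomorphism and $f^{-1}(c) \cong \mathbb{S}^{n-1}$. There is really no substantial obstacle here: the whole argument reduces to the one-line estimate showing positivity of $\nabla f \cdot x$ outside $B_{r_0}$, from which radial star-shapedness of the large level sets follows automatically.
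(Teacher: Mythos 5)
Your argument is correct and follows the same overall strategy as the paper: show that for $c \gg 0$ every ray from a fixed origin meets the level set in exactly one point, so that $f^{-1}(c)$ is a radial graph over $\mathbb{S}^{n-1}$. Where you genuinely differ is in how monotonicity along rays is established. The paper works with $F = e^{f}$, normalizes the barycentre of the $w_i$ to the origin, expands $F(x) = \|x\|^{2r} + g(x)$ with $|g(x)| \le C\|x\|^{2r-2}$ and a corresponding bound on the derivative of $g$ along rays, and deduces monotonicity of $t \mapsto F(ty)$ only for $t$ beyond some implicit radius $R$. You instead compute the radial derivative exactly,
$$\nabla f(x)\cdot x \;=\; \sum_{i=1}^r \frac{2\bigl(\|x\|^2 - \langle w_i,x\rangle\bigr)}{\|x-w_i\|^2}\;\ge\; \sum_{i=1}^r \frac{2\,\|x\|\bigl(\|x\|-\|w_i\|\bigr)}{\|x-w_i\|^2},$$
so each summand is positive as soon as $\|x\| > \max_i\|w_i\|$. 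This is more elementary, requires no normalization or asymptotic constants, and gives an explicit radius beyond which all level sets are star-shaped, which is a small but real gain over the paper's version. One minor imprecision: to conclude that $f^{-1}(c) \subset \{\|x\|>R\}$ (which you need for surjectivity of $\Phi$) you should choose $c_0 > \max_{\|x\|\le R} f$ rather than $c_0 > \max_{\|x\|=R} f$; the former is finite since $f$ is upper semicontinuous with values in $[-\infty,\infty)$ on the compact ball (alternatively, the two maxima coincide by the maximum principle of Theorem \ref{convexhull}). With that one-word adjustment the proof is complete.
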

 
 Because, then the Betti number of $f^{-1}(c) $ is $b_1 = 0$ for  $ c >> 0$ $\Rightarrow $ the first Betti number
 of each smooth component of a level set is always zero, hence these are just connected sums of  spheres, i.e., spheres.
 
 {\it Proof of Lemma \ref{bigspheres}}
 
 Consider the function
 $$F(x) : = \Pi_{i = 1}^r  (\| x - w_i \|^2)$$
 and assume that the barycentre $\sum_{i = 1}^r   w_i  = 0.$
 
 Then $ F(x) = \| x  \|^{2r} + g(x)$, and for $\| x \| \geq 1$ we have $ | g(x) | \leq  C \| x  \|^{2r-2}$, where $C$ is a fixed constant. 
 
 For $c >>0$ we get a smooth hypersurface
 $$X_c : = \{ x | F(x) = c\} = \{ x | \| x  \|^{2r} + g(x)=c\}.$$
 
 Assume that $y$ is a point in the unit sphere, i.e., $\| y\| = 1$. Then we claim that  the half line $\RR_+ y$ intersects
 $X_c$ in precisely one point.
 
 In fact, this point corresponds to the positive value of $ t \in \RR$, such that 
$$  \| t  \|^{2r} + g(t y )= c .$$
We have  $|g(t y )|  \leq C t^{2r-2}$ and $| \frac{d}{dt} g(ty)| \leq C' |t|^{2r-3}$  for $|t| \geq 1$, where we may assume $ C' \geq C$.

Therefore there is $R > 1 $ such that  the function $F( ty)$ is strictly monotone growing for $t > R$, and if we let $ c >  (C + 1)  R^{2r}$,
then $t$ is uniquely determined.

We have then given a diffeomorphism between the unit sphere and $X_c$ for $c > >0$.

\medskip

  {\bf QED for  the   proof  of Lemma \ref{bigspheres} and of Theorem \ref{spheres}}

\bigskip

  We shall now give  another    proof  of Theorem \ref{MorseCairns}.

\noindent \it Second  Proof of Theorem \ref{MorseCairns} \rm Without loss of generality we can assume that $n=2m$.
The proof is based on the important observation that the logarithmic potential $f$ is strictly plurisubharmonic with respect to {\bf any} complex structure of $\mathbb{R}^{2m}$ compatible with the standard flat Riemannian metric, see Lemma \ref{leviform}. So we are going to take advantage of the freedom in the choice of a convenient complex structure.
For example, if the points $w_1,\cdots,w_r$ are contained in a real 2-plane $\Pi$ then by changing the complex structure of $\mathbb{R}^{2m}$
we can assume that $\Pi$ is a complex line. Then the critical points are the zeros of the derivative of the polynomial $P(z) = \Pi_{i=1}^r (z - w_i)$, where $z$ is a complex coordinate on $\Pi$, hence the critical points are isolated.\\

So, from now on, we shall assume that  the points $w_1,\cdots,w_r$ are not contained in a $2$-plane and that there are non isolated critical points.
Again, by Theorem \ref{index},
there is a circle $\Gamma$ of critical points of $f$, which are  local minima of $f$.
We are going to derive  a contradiction.

Observe that  $\Gamma \subset f^{-1}(c), c \in \mathbb{R}$, is a connected component of a fibre $f^{-1}(c)$ of $f$, 
since the positivity of the Hessian is at least $2m-1$, see Theorem \ref{index}.

By using Jesse Douglas' solution of the  Plateau Problem   \cite[page 269]{Douglas}
we get a harmonic map $\varphi : \overline{D} \to \mathbb{R}^{2m}$  where $\overline{D}$ is the closed disc in $\RR^2$, and
with the property   that  $\Gamma = \varphi(\partial D)$.
As explained by Douglas, $\varphi$ is conformal except at isolated points $p \in D$ where $\mathrm{d} \varphi_ p = 0$.

 The composition $h = f \circ \varphi$ must take its maximum value at a point $ x_0 $ in the interior of $ \overline{D}$.  Otherwise, i.e. if $x_0$ is in the boundary of the disk,  then $h$ is constant hence $\varphi(\overline{D}) \subset f^{-1}(c)$ which contradicts
the fact that  $\Gamma$ is  a connected component of $f^{-1}(c)$ according to Theorem \ref{index} (observe that  $\varphi(\overline{D})$ is connected,  and it contains $\Gamma$ strictly since it is contractible). \\

The theorem is consequence of the following claim.\\

\noindent
{\bf Claim: the function $h$ is subharmonic}

Let $x \in D$ be a general point,  i.e. at $x$ the harmonic map $\varphi$ is conformal.
Then $ \Pi := \mathrm{d} \varphi_ x (T_x D) \subset T_{\varphi(x)} \mathbb{R}^{2m}$ is a 2-plane.
Let $J$ be a complex structure of $\mathbb{R}^{2m}$ compatible with the standard metric such that $J(\Pi) = \Pi$.

The Laplacian  $\Delta h$ at $x$ (\cite[Corollary 3.3.13,page 76]{BairdWood}) satisfies:
\[ \Delta h = \mathrm{d}f    (\tau (\varphi))   + H_{f,x} (\mathrm{d} \varphi (e_1), \mathrm{d} \varphi (e_1) ) + H_{f,x} (\mathrm{d} \varphi (e_2),\mathrm{d} \varphi (e_2) ) ,\]

where $e_1,e_2$ is an orthonormal frame at $x$ and $\tau$ is the tension field of $\varphi$.

Since $\tau \equiv 0$ because $\varphi$ is harmonic (\cite[Theorem 3.3.3, page 73]{BairdWood})we get
\[ \Delta h = H_{f,x} (\mathrm{d} \varphi(e_1), \mathrm{d} \varphi (e_1) ) + H_{f,x} (\mathrm{d} \varphi (e_2),\mathrm{d} \varphi (e_2) ).\]

Now,   using that $J \mathrm{d} \varphi (e_1)  = \mathrm{d} \varphi(e_2)$ (since $\Pi$ is $J$ invariant and $\varphi$ is conformal) the vector
\[ Z := \mathrm{d} \varphi (e_1) + J \mathrm{d} \varphi (e_1) \, \mathrm{i} \] is of type $(1,0)$ w.r.t. $J$. Thus

\[ \begin{aligned} \Delta h &= H_{f,x} (\mathrm{d} \varphi (e_1), \mathrm{d} \varphi(e_1) ) + H_{f,x} (\mathrm{d} \varphi (e_2),\mathrm{d} \varphi (e_2) ) = \\ &= H_{f,x}(Z,\overline{Z}) = 2 {\mathcal{L}}_{f,x}(Z,\overline{Z}) > 0 , \end{aligned}\]

where ${\mathcal{L}}_{f,x}$ is the Levi form of our logarithmic potential $f$ which is strictly plurisubharmonic, hence the last inequality.\\
Since $x$ is general we get that \[ \Delta h \geq 0 \]
on the whole disk by continuity of the Laplacian of $h$.
 {\bf QED of the claim}.

 The theorem follows since the subharmonic function $h$ cannot have a maximum in the interior of the disk.

 {\bf QED for  the second proof of Theorem \ref{MorseCairns} }

\section{Acknowledgements}

We would like to thank Umberto Zannier for providing us with a useful lemma  which was used in our first  argument to show that the number
of local non absolute minima can tend to infinity. A.J. Di Scala would like to thank Daniel Peralta-Salas and Martin Sombra for the discussions that led to the statement and proof of Theorem \ref{spheres}.

\begin{bibdiv}
\begin{biblist}

\bib{3D}{article}{
   author={Arcos, Gabriel},
   author={Montilla, Guillermo},
   author={Ortega, Jos{\'e}},
   author={Paluszny, Marco},
   title={Shape control of 3D lemniscates},
   journal={Math. Comput. Simulation},
   volume={73},
   date={2006},
   number={1-4},
   pages={21--27},
   issn={0378-4754},
   review={\MR{2307087 (2009e:65031)}},
   doi={10.1016/j.matcom.2006.06.001},
}

\bib{Arnold92}{article}{
   author={Arnol'd, V. I.},
   title={Snake calculus and the combinatorics of the Bernoulli, Euler and
   Springer numbers of Coxeter groups},
   language={Russian},
   journal={Uspekhi Mat. Nauk},
   volume={47},
   date={1992},
   number={1(283)},
   pages={3--45, 240},
   issn={0042-1316},
   translation={
      journal={Russian Math. Surveys},
      volume={47},
      date={1992},
      number={1},
      pages={1--51},
      issn={0036-0279},
   },
   review={\MR{1171862 (93h:20042)}},
   doi={10.1070/RM1992v047n01ABEH000861},
}

\bib{Arnold91}{article}{
   author={Arnol'd, V. I.},
   title={Bernoulli-Euler updown numbers associated with function
   singularities, their combinatorics and arithmetics},
   journal={Duke Math. J.},
   volume={63},
   date={1991},
   number={2},
   pages={537--555},
   issn={0012-7094},
   review={\MR{1115120 (93b:58020)}},
   doi={10.1215/S0012-7094-91-06323-4},
}

\bib{BairdWood}{book}{
   author={Baird, Paul},
   author={Wood, John C.},
   title={Harmonic morphisms between Riemannian manifolds},
   series={London Mathematical Society Monographs. New Series},
   volume={29},
   publisher={The Clarendon Press, Oxford University Press, Oxford},
   date={2003},
   pages={xvi+520},
   isbn={0-19-850362-8},
   review={\MR{2044031}},
   doi={10.1093/acprof:oso/9780198503620.001.0001},
}

\bib{bauercat}{article}{
   author={Bauer, Ingrid},
   author={Catanese, Fabrizio},
   title={Generic lemniscates of algebraic functions},
   journal={Math. Ann.},
   volume={307},
   date={1997},
   number={3},
   pages={417--444},
   issn={0025-5831},
   review={\MR{1437047 (98f:57003)}},
   doi={10.1007/s002080050042},
}

\bib{3Darxiv}{article}{
   author={Bauer, Ingrid},
   author={Catanese, Fabrizio},
   author={Di Scala, Antonio J.}
   title={Higher Dimensional Lemniscates: The Geometry of $r$ Particles in $n$-Space with Logarithmic Potentials},
   journal={http://arxiv.org/abs/1506.01919.},
}

\bib{cairns-morse}{book}{
   author={Cairns, Stewart S.},
   author={Morse, Marston},
   title={Critical point theory in global analysis and differential topology: An introduction.},
   publisher={Pure and Applied Mathematics, Academic Press, New York-London},
   date={1969},
   volume={33}
   pages={xii+389},
   review={\MR{0245046 (39 \#6358)}},
}

\bib{complexmanifolds}{article}{
   author={Catanese, Fabrizio},
   title={Komplexe Mannigfaltigkeiten, Course Lecture Notes, taken by P. Frediani},
   date={1999},
}

\bib{CatFr93}{article}{
   author={Catanese, Fabrizio},
   author={Frediani, Paola},
   title={Configurations of real and complex polynomials},
   note={Journ\'ees de G\'eom\'etrie Alg\'ebrique d'Orsay (Orsay, 1992)},
   journal={Ast\'erisque},
   number={218},
   date={1993},
   pages={61--93},
   issn={0303-1179},
   review={\MR{1265309 (95c:57024)}},
}

\bib{catanesepaluszny}{article}{
   author={Catanese, Fabrizio},
   author={Paluszny, Marco},
   title={Polynomial-lemniscates, trees and braids},
   journal={Topology},
   volume={30},
   date={1991},
   number={4},
   pages={623--640},
   issn={0040-9383},
   review={\MR{1133876 (93b:57014)}},
   doi={10.1016/0040-9383(91)90043-4},
}

\bib{CatWaj}{article}{
   author={Catanese, Fabrizio},
   author={Wajnryb, Bronislaw},
   title={The fundamental group of generic polynomials},
   journal={Topology},
   volume={30},
   date={1991},
   number={4},
   pages={641--651},
   issn={0040-9383},
   review={\MR{1133877 (93b:57015)}},
   doi={10.1016/0040-9383(91)90044-5},
}

\bib{Douglas}{article}{
   author={Douglas, Jesse},
   title={Solution of the problem of Plateau},
   journal={Trans. Amer. Math. Soc.},
   volume={33},
   date={1931},
   number={1},
   pages={263--321},
   issn={0002-9947},
   review={\MR{1501590}},
   doi={10.2307/1989472},
}

\bib{Jacobson}{book}{
   author={Jacobson, Nathan},
   title={Basic algebra. I},
   publisher={W. H. Freeman and Co., San Francisco, Calif.},
   date={1974},
   pages={xvi+472},
   review={\MR{0356989 (50 \#9457)}},
}

\bib{shapiro}{article}{
   author={Gabrielov, A.},
   author={Novikov, D.},
   author={Shapiro, B.},
   title={Mystery of point charges},
   journal={Proc. Lond. Math. Soc. (3)},
   volume={95},
   date={2007},
   number={2},
   pages={443--472},
   issn={0024-6115},
   review={\MR{2352567 (2008g:31008)}},
   doi={10.1112/plms/pdm012},
}

\bib{maxwell}{book}{
   author={Maxwell, James Clerk},
   title={A Treatise on Electricity and Magnetism, {\bf vol. 1}},
   publisher={Oxford, Clarendon Press},
   date={1873},
}

\bib{motzkin-walsh}{article}{
   author={Motzkin, T.S.},
   author={Walsh, J.L.},
   title={Equilibrium of inverse-distance forces in three-dimension},
   journal={Pacific J. Math.},
   volume={44},
   date={1973},
   pages={241--250},
   issn={0030-8730},
   review={\MR{0316687 (47 \#5234)}},
}

\bib{nagy4}{article}{
   author={Nagy, G. V. Sz.},
   title={\"Uber die Lage der Nullstellen eines Abstandspolynoms und seiner Derivierten},
   journal={Bull. Amer. Math. Soc.},
   volume={55},
   date={1949},
   pages={329--342},
   issn={0273-0979},
}

\bib{pal1}{article}{
   author={Paluszny, Marco},
   title={On periodic solutions of polynomial ODEs in the plane},
   journal={J. Differential Equations},
   volume={53},
   date={1984},
   number={1},
   pages={24--29},
   issn={0022-0396},
   review={\MR{0747404 (86g:34054)}},
   doi={doi:10.1016/0022-0396(84)90023-8},
}

\bib{pmo}{article}{
   author={Paluszny, Marco},
   author={Montilla, Guillermo},
   author={Ortega, Jos{\'e} Rafael},
   title={Lemniscates 3D: a CAGD primitive?},
   journal={Numer. Algorithms},
   volume={39},
   date={2005},
   number={1-3},
   pages={317--327},
   issn={1017-1398},
   review={\MR{2137759 (2006a:65020)}},
   doi={10.1007/s11075-004-3645-6},
}

\bib{walsh6}{article}{
   author={Walsh, J.L.},
   title={On the location of the roots of the derivative of a polynomial},
   journal={ Ann. of Math. (2)},
   volume={22},
   date={1920},
   pages={128--144},
   issn={0003-486X},
}

\bib{walsh7}{book}{
   author={Walsh, J.L.},
   title={The Location of Critical Points of Analytic and Harmonic Functions},
   publisher={American Mathematical Society Colloquium Publications},
   date={1950},
   volume={34}
   pages={viii+384},
   review={\MR{0037350 (12,249d)}},
}

\end{biblist}
\end{bibdiv}

\bigskip
\noindent
{\bf Author's Adresses:}

\noindent
Ingrid Bauer, Fabrizio Catanese\\
Mathematisches Institut der Universit\"at Bayreuth, NW II\\
Universit\"atsstr. 30;
D-95447 Bayreuth, Germany\\

\noindent
Antonio J. Di Scala\\
Dipartimento di Scienze Matematiche `G.L. Lagrange'\\
Politecnico di Torino, Corso Duca degli Abruzzi 24;
10129, Torino, Italy\\

\bigskip

\end{document}